\documentclass[10pt,a4wide]{article} 
\usepackage{bbm,amsmath,amsthm, amssymb}
\usepackage[active]{srcltx}
\usepackage{color}
\usepackage{macros}
\usepackage{enumerate}
\definecolor{rltred}{rgb}{0.75,0,0}
\definecolor{rltgreen}{rgb}{0,0.5,0}
\definecolor{rltblue}{rgb}{0,0,0.75}
\usepackage{dirtytalk}
\usepackage[%
 % hypertex,
   colorlinks=true,
   urlcolor=rltblue,       % \href{...}{...} external (URL)
   filecolor=rltgreen,     % \href{...} local file
   citecolor=rltgreen,     % \cite{...} 
   linkcolor=rltred,       % \ref{...} and \pageref{...}
 %  pagebackref=true,
 %  pdfpagemode=None,
 %  pdfstartview=FitH,
 %  pdfview=FitH,
   bookmarks=true,
 %  bookmarksopen=true,
   unicode,
   plainpages=false,
 %  hypertexnames=false,
   ]{hyperref}
\begin{document}
\title{\bf On the Reynolds time-averaged equations and the long-time behavior of
  Leray-Hopf weak solutions, with applications to ensemble averages}
\author{Luigi C. Berselli
  \\
  Universit\`a di Pisa
  \\
  Dipartimento di Matematica
  \\
  Via Buonarroti~1/c, I-56127 Pisa, ITALY.
  \\
  email: luigi.carlo.berselli@unipi.it
  \\
  \\
  \and Roger Lewandowski
  \\
University of Rennes \& INRIA
  \\
CNRS, IRMAR, UMR 6625 \& Fluminance team, 
  \\
  Campus Beaulieu, F-35042 Rennes
  \\
  email: Roger.Lewandowski@univ-rennes1.fr }
\date{}
\maketitle
\begin{abstract}
  We consider the three dimensional incompressible Navier-Stokes equations with non
  stationary source terms $\fv$ chosen in a suitable space. We prove the existence of
  Leray-Hopf weak solutions and that it is possible to characterize (up to sub-sequences)
  their long-time averages, which satisfy the Reynolds averaged equations, involving a
  Reynolds stress. Moreover, we show that the turbulent dissipation is bounded by the sum
  of the Reynolds stress work and of the external turbulent fluxes, without any additional
  assumption, than that of dealing with Leray-Hopf weak solutions.

  Finally, in the last section we consider ensemble averages of solutions, associated to a
  set of different forces and we prove that the fluctuations continue to have a
  dissipative effect on the mean flow.

\medskip

\noindent Keywords: Navier-Stokes equations, time-averaging, Reynolds equations,
Boussinesq hypothesis.

\medskip

\noindent 2010 MSC: Primary 35Q30. Secondary: 76D05, 76D06, 76F05.

\end{abstract}
\section{Introduction}
Let us consider the 3D homogeneous incompressible Navier-Stokes equations (NSE in the
sequel),
\begin{equation}
  \label{eq:NSE}
  \left\{  
    \begin{aligned}
      \bv_{t}+ (\bv\cdot\nabla)\, \bv-\nu\Delta \bv+\nabla
      p&=\bff\qquad &\text{ in }]0,+\infty[ \times\Omega,
      \\
      \nabla\cdot \bv&=0\qquad &\text{ in }]0,+\infty[\times\Omega,
      \\
      \bv&=\mathbf{0}\qquad &\text{ on }]0,+\infty[\times\Gamma,
      \\
      \bv(0, \x)&=\bv_{0}(\x) \qquad &\text{ in }\Omega,
  \end{aligned}
\right.
\end{equation} 
where $\Om \subset \R^3$ is a bounded Lipschitz domain, $\Ga = \p \Om$ its boundary, $\vv=
\vv(t, \x)$ denotes the fluid velocity, $p = p(t, \x)$ the pressure, $\fv = \fv (t, \x)$
the external source term, and $(t, \x) \in ]0,+\infty[\times\Omega$. The main aim of this
paper is to study the long-time averages of weak solutions to the NSE~\eqref{eq:NSE},
namely 
\begin{equation}
  \label{eq:Moyenne} \vm (\x) := \lim_{t \to \infty} M_t (\vv), \quad\text{where}\quad M_t(\vv) :=
  \frac{1}{t} \int_0^t \vv (s, \x)\, ds,
\end{equation}
when the source term $\fv$ is time dependent, and to link it to the Reynolds averaged
equations (see~\eqref{eq:Reynolds_eqs} below). We also will consider the problem of
ensemble averages, which is closely related to long-time averages.

Long-time average for ``tumultuous'' flows, today turbulent flows, seem to have been
considered first by G.~Stokes~\cite{2GS51} and then by O.~Reynolds~\cite{Rey1895}. The
%\marginpar{Maybe better: Statistically stationary??? {\color{blue} what do you mean ?}} 
idea is that \textit{steady-state} turbulent flows are oscillating around a stationary
flow, which can be expressed through long-time averages. L.~Prandtl used them to introduce
the legendary ``Prandtl mixing length'' (see~\cite{LP25} and in~\cite[Ch.~3,
Sec.~4]{LP52}) to model a turbulent boundary layer over a plate.  However, although
long-time averaging plays a central role in turbulence modeling (since it is natural as
well as ``ensemble averaging'') it is not clear whether the limit~\eqref{eq:Moyenne} is
well defined, or not.

The mathematical problem of properly defining long-time averages and to investigate the
connection with the Reynolds equations was already studied before, but when the source
term $\bf$ does not depend on time, namely $\fv = \fv (\x)$.

So far as we know, the first who considered this issue is
C.~Foias~\cite[Sec.~8]{Foi197273}, when $\fv \in H$, where
\begin{equation}
  \label{eq:espace_H} 
  H := \{ \uv \in L^2(\Om)^3 \ \hbox{s.t.} \, \ \g \cdot \uv = 0,\ \uv\cdot\mathbf{n}=0 \text{ on }\Gamma\}.
\end{equation} 
Foias analysis is based on the notion of ``statistical solution'' introduced
in~\cite[Sec.~3]{Foi197273} and he was able to prove that for any given Leray-Hopf
solution $\vv = \vv(t,\x)$ to~\eqref{eq:NSE} then:
 
\smallskip 

\noindent - There exists $\vm \in H$, such that, up to a sub-sequence, $M_{t} (\vv) \to
\vm$ in $H$ as $t \to \infty$;

\noindent - There is a \textit{stationary statistical solution} $\mu$ to the NSE, which is a probability
measure on $H$, such that it holds $\vm = \int_H \wv\, d\mu (\wv)$;
 
\noindent - There exists a random variable on $(H, \mu)$ called $\bv'$ (in the notation of
Foias called $\delta\bv$) such that
\begin{equation*}
  \mean{\bv'}=\int_{H}\bv'\,d\mu=0\qquad\text{and}\qquad\mean{\|\nabla\bv'\|^{2}}=\int_{H}\|\nabla
  \bv'\|^{2}\,d\mu<\infty, 
\end{equation*}
such that $(\vm, \vv')$ is a solution to the Reynolds Equations given
by~\eqref{eq:Reynolds_eqs} below. If $\mu$ is a statistical solution, $\vv'$ is the random
%\marginpar{Foias p. 91???, definition not clear to me. What we write? {\color{blue} For
%somewho is familiar with probabities it is clear, for someone else it will never be
%clear. Thereofore we leave like that}} 
variable expressed by its probability law, 
\begin{equation}
  \label{eq:fluctuation_foias_1}
\text{Prob} ( \bv'\in F) = \mu (\vm+ F), 
\end{equation}
for any Borel
set $F\in H$.
 
 \noindent - According to our notations, the Reynolds stress $\reyn$ given by
 \begin{equation*}
% \label{eq:reyn_stat} 
   \g \cdot \reyn = \int_H \g \cdot \big[(\vv-\vm) \otimes (\vv-\vm)\big]\,d\mu (\vv),
\end{equation*}
is \textit{dissipative on the mean flow,} which is one of the main challenges of such
analysis, because of the Boussinesq assumption (see
in~\cite[Ch.~4,\,Sec.~4.4.3.1]{CL2014}). In fact this is much more better, since Foias
in~\cite[Sec.~8-2-a, Prop.~1, p.~99]{Foi197273} was able to prove that the \textit{turbulent
dissipation} $\E$ is bounded by the work of the Reynolds stress on the mean flow,
\begin{equation}
  \label{eq:fluctuation_foias} 
  \E := \nu \overline{ \| \g \vv ' \|^2} \le (\g \cdot\reyn, \vm),
\end{equation}
where $(\cdot, \cdot)$ and $\|\,.\,\|$ denote the standard $L^2(\Omega)$ scalar product
and norm, respectively. (Sometimes norm is normalized by $|\Omega|$, the measure of the
domain, but this is inessential). This analysis is partially reported in~\cite[Ch.~3,
Sec.~3]{FMRT2001a}, where the limit in~\eqref{eq:Moyenne} is replaced by the abstract
Banach limit, but without any link to the Reynolds equations and dissipation inequality
such as~\eqref{eq:fluctuation_foias}. 

The original Foias analysis is very deep and essential to the field. However, it is worth
noting that in this approach:

\smallskip 

\noindent i) The natural time filter used to determine the Reynolds stress (initially
suggested by Prandtl, when the limit makes sense) 
\begin{equation*}
  \reyn = \reyn (\x) := \lim_{t \to
    \infty} \frac{1}{t} \int_0^t [(\vv - \vm) \otimes (\vv - \vm)](s, \x)\, ds,
\end{equation*}
is replaced by an abstract probability measure that it is not possible to calculate in
practical simulations, although the ergodic assumption -which remains to be proved- would
mean that they coincide (see for instance Frisch~\cite{UF95});

\smallskip 

\noindent ii) The fluctuation given by~\eqref{eq:fluctuation_foias_1}, when the force is
time independent, is a time independent random variable, which may be questionable from
the physical point of view. In fact, when $\vm$ is given by~\eqref{eq:Moyenne}, one cannot
conclude that this $\vv'$ yields the Reynolds decomposition
\begin{equation*}
\vv(t, \x) = \vm (\x) + \vv'(t, \x),
\end{equation*}
in which the fluctuation is time dependent for a realistic non stationary flow.

\smallskip 

\noindent iii) Concerning items i) and ii) above, we can suggest that probably there are
still work to be done in the interesting field of statistical solutions, if one wants to
use this mathematical tool to face the tough closure problem in the Reynolds equations,
and apply to realistic problem. Moreover, the case of a time dependent source term also
remains to be considered in connection with the scope of statistical solutions (see the
warning in~\cite[Part~I, Sec.~5, p.~313]{Foi197273}).
 
\smallskip

Still in the case of a stationary source term, the long-time averaged problem has been
also considered more recently in~\cite{Lew2015} and at the time the author was not aware
yet of the connection between time-averaging and statistical solutions in Foias
work. In~\cite{Lew2015}, he studied the equation satisfies by $M_t(\vv)$ (see
equations~\eqref{eq:Reynolds_weak_form} below) for a Leray-Hopf weak solution of the NSE,
and he took the limit in this equation when the domain if of class $C^{9/4, 1}$ and
$\bff=\bff(\bx)\in L^{5/4}(\Om)^3 \cap V'$, where
\begin{equation}
  \label{eq:espace_V} 
  V:= \{ \uv \in H^1_0(\Om)^3 \,\, \hbox{s.t.} \, \ \g \cdot \uv = 0 \},
\end{equation}
and $V'$ denotes the dual space of $V$, with duality pairing $<.\,,\,.>$.  The analysis is
based on the energy inequality, which yields a uniform estimate in time of the $L^2$ norm
of $\vv$, and on a $L^{p}$-regularity result by Amrouche and Girault~\cite{2AG91} about
the steady Stokes problems, valid in $C^{k, \alpha}$ domains. It is shown
in~\cite{Lew2015} that there exists $\reyn \in L^{5/3} (\Om)^9$ and $\plm \in W^{1, 5/4}
(\Om)/\R$ such that, up to a sub-sequence, when $t \to \infty$, $M_t (\vv)$ converges to a
field $\vm\in {\bf W}^{2, 5/4}(\Om)^3$, which satisfies in the sense of the distributions
the closed Reynolds equations:
 \begin{equation}
  \label{eq:Reynolds_Stat_pb_2}
  \left\{
    \begin{aligned}
     (\mean{\bv}\cdot\nabla)\, \mean{\bv} -\nu\Delta \mean{\bv}+\nabla
      \mean{p}+\nabla\cdot\reyn&= \fv\qquad &\text{ in }\Omega,
      \\
      \nabla\cdot \mean{\bv}&=0\qquad &\text{ in }\Omega,
      \\
      \mean{\bv}&=\mathbf{0}\qquad &\text{ on }\Gamma.
    \end{aligned}  
  \right.
\end{equation}
Moreover, it is also shown that $\reyn$ is dissipative on the mean flow, namely
 \begin{equation}
  \label{eq:reyn_dissipative} 
  0 \le (\nabla\cdot \reyn, \mean{\bv}),  
\end{equation}
which is weaker than~\eqref{eq:fluctuation_foias}. 
 
The main part of the present study is in continuation of~\cite{Lew2015}, bringing
substantial improvements. The novelty is that we are considering time dependent source
terms $\fv = \fv(t, \x)$, which was never considered before for this problem, up to our
knowledge. Moreover, we do not need extra regularity assumption on the domain $\Om\subset
\R^{3}$ and on $\bff$. The first main result of this paper, Theorem~\ref{thm:main_theorem} below, is
close to that proved in~\cite{Lew2015}. Roughly speaking, we will show that $M_t (\vv)$
given by~\eqref{eq:Moyenne}, converges to some $\vm$ (up to sub-sequences) and that there
are $\plm$ and $\reyn$ such that~\eqref{eq:Reynolds_Stat_pb_2} holds, at least in ${\cal
  D}'(\Om)$, in which $\fv$ is replaced by $\overline {\fv}$.
 
One key point is the determination of a suitable class for the source term. Throughout the
paper, we will take $\fv:\ \R_+\to V'$, made of function for which there is a
constant $C>0$ such that 
\begin{equation*}
\forall \, t \in \R_+\qquad \int_t^{t+1} \| \fv (s) \|^{2}_{V'}\, ds \le C.
\end{equation*}
%, where
%$\| \cdot \|$ denotes the standard $L^2$ norm, equipped with the norm given
%by~\eqref{eq:l2loc}. 
In this respect we observe our results improve the previous ones also in terms of
regularity of the force, not only because we consider a time-dependent one. The main
building block of our work is the derivation of a uniform estimate of the $L^2$ norm of
$\vv$, for $\fv$ as above (see~\eqref{eq:main_estimate_1} and its
Corollary~\eqref{eq:main_estimate_2}). This allows to prove the existence of a weak
solution on $[0,\infty)$ to the NSE and to pass to the limit in the equation satisfied by
$M_t(\vv)$, when $t \to \infty$.

Moreover, we are able to generalise Foias result~\eqref{eq:fluctuation_foias} by proving
that the turbulent dissipation $\E$ is bounded by the sum of the work of the Reynolds
stress on the mean flow and of the external turbulent fluxes, namely
\begin{equation}
\label{eq:fluctuation_LB_RL} \E = \nu \overline{ \| \g \vv ' \| } \le (\g \cdot
\reyn, \vm) + \overline {< \fv', \vv' > },
\end{equation}
which is one of the main features of our result.  Note that physically, it is expected
that~\eqref{eq:fluctuation_LB_RL} becomes an equality for strong solutions.  Furthermore,
we also prove that when $\fv$ is ``attracted'' (in some sense,
see~\eqref{eq:additional_ass_f} below) by a stationary force $\widetilde \fv = \widetilde
\fv (\x) $ as $t \to \infty$, then the turbulent fluxes $\overline {< \fv', \vv' > }$
in~\eqref{eq:fluctuation_LB_RL} vanish, so that~\eqref{eq:fluctuation_foias} is
restored. In particular, the question whether the stress tensor $\reyn$ is dissipative
remains an open problem for general unsteady $\fv$, as those for which we still have global
existence of weak solutions.

In the second part of the paper, we will also consider ensemble averages, often used in
practical experiments. This consists of considering $n\in\N$ realizations of the flow,
$\{\vv_1,\dots,\vv_n\}$ and to evaluate the arithmetic mean
\begin{equation*}
  \smean{\vv} := \frac{1}{n}\sum_{k=1}^n \vv_k.
\end{equation*}
Layton considers such ensemble averages in~\cite{Lay2014}, by introducing the
corresponding Reynolds stress, written as
\begin{equation*}
  R(\vv, \vv) =  \smean{ \vv   \otimes \vv} -\smean{\vv}\otimes \smean{\vv}.
\end{equation*}
He shows that for a fixed stationary source term $\fv$ and $n$ strong solutions of the
NSE, then $R(\vv, \vv)$ is dissipative on the ensemble average, in time average. More
specifically it holds 
\begin{equation*}
  \liminf_{t \to \infty} M_t [ (\g \cdot R(\vv, \vv), \smean{\vv}) ]  \ge 0,
\end{equation*}
and to prove this inequality he first performs the ensemble average, then it takes
the long-time average.

In order to remove the additional assumption used in~\cite{Lay2014,JL2016} of having strong
solutions, we will carry out an approach very close, but in a reversed order: We first take
the long-time average of the realizations for a sequence of time independent source terms
$\suite \fv k$. Then, we form the ensemble average
\begin{equation*}
    \mathbf{S}^n:= \frac{1}{n} \sum_{k=1}^n \vm_k,
\end{equation*}
where $\vm_k$ are weak solutions of the Reynolds equations.
Under suitable (but very light) regularity assumption about the $\fv_k$'s, we show the
convergence of $\suite S n$ to some $ \left < \vv \right > $ that satisfies the closed
Reynolds equations, and such that dissipativity still holds, that is
\begin{equation*}
  0 \le ( \g \cdot \smean{\reyn} ,  \smean{\vv} ),
\end{equation*} 
which holds for weak solutions (see the specific statement in
Theorem~\ref{thm:main_ensemble}).

\medskip

\noindent\textbf{Plan of the paper.}
The paper is organized as follows. We start by giving in Sec.~\ref{sec:main} the specific
technical statements of the results we prove. Then, in Sec.~\ref{sec:NSE-estimates} we give
some results about the functional spaces we are working with and we prove in detail the
main energy estimates~\eqref{eq:energy_1} and~\eqref{eq:dissipation} for source terms $\fv
\in L^2_{uloc}(\R_+, V')$, which yields an existence result of global weak solutions to
the NSE for such $\fv$.  The Sec.~\ref{sec:reynolds_rules} is devoted to the Reynolds
problem and the develop additional properties of the time average operator $M_t$. Finally,
we give the proofs of the two main results on time averages in Sec.~\ref{sec:proof-main1}
and Sec.~\ref{sec:ensemble}, respectively.

\section{Main results}
\label{sec:main}
\subsection{On the source term and an existence result} 
Since we aim to consider long-time averages for the NSE, we must consider solutions which
are global-in-time (defined for all positive times). Due to the well-known open problems
related to the NSE, this enforces us to restrict to weak solutions. By using a most
natural setting, we take the initial datum $\vv_0 \in H$, where $H$ is defined
by~\eqref{eq:espace_H}.

%%%%%%

The classical Leray-Hopf results of existence (but without uniqueness) of a global weak
solution $\vv$ to the NSE holds when $\bff\in L^2(\R_+;V')$, and the velocity $\vv$
satisfies
\begin{equation*}
  \vv \in L^2(\R_+, V) \cap L^\infty(\R_+, H),
\end{equation*}
where $V$ is defined by~\eqref{eq:espace_V} 
and $V'$ denotes its topological dual. We will also denote by $<\, , \, >$ the duality
pairing\footnote{Generally speaking and when no risk of confusion occurs, we always denote
  by $<\, , \, > $ the duality pairing between any Banach space $X$ and its dual $X'$,
  without mentioning explicitly which spaces are involved.}  between $V'$ and $V$.

Source terms $\fv \in L^2(\R_+;V')$ verify $ \int_t^\infty \| {\bf f} (s) \|_{V'}^2\, ds
\to 0$ when $t \to+\infty$. Therefore, a turbulent motion cannot be maintained for large
$t$, which is not relevant for our purpose. The choice adopted in the previous studies on
Reynolds equations was that of a constant force, and we also observe that many estimates
could have been easily extended to a uniformly bounded $\bff\in L^{\infty}(\R_{+};V'$. On
the other hand, we consider a broader class for the source terms. According to the usual
folklore in mathematical analysis, we decided to consider the space $L^2_{uloc} (\R_+;V')$
made of all strongly measurable vector fields ${\bf f}:\ \R_+\to V'$ such that
\begin{equation*}
%   \label{eq:l2loc}
   \|\bff\|_{L^{2}_{uloc}(\R_+;V')}
   :=\left[\sup_{t\geq0}\int_{t}^{t+1}\|\bff(s)\|_{V'}^{2}\, ds \right]^{1/2}<+\infty.
 \end{equation*}
 We will see in the following, that the above space, which strictly contains both
 $L^2(\R_+;V')$ and $L^\infty(\R_+;V')$ is well suited for our framework. We will prove
 the following existence result, in order to make the paper self-contained.
\begin{theorem}
  \label{thm:existence_temps_long} 
  Let $\vv_0 \in H$, and let ${\bf f} \in L^2_{uloc}(\R_+;V')$. Then, there exists a weak
  solution $\vv$ to the NSE~(\ref{eq:NSE}) global-in-time, obtained by Galerkin
  approximations, such that
 \begin{equation*}
\vv \in L^2_{loc} (\R_+;V) \cap L^\infty(\R_+;H),
\end{equation*}
 and which satisfies for all $t \ge 0$,
 \begin{align} 
   &  \label{eq:main_estimate_1} \zoom  \| \vv (t) \|^2 \le \| \vv_0 \|^2 + \left ( 3 + \frac{C_\Om}{\nu}  \right )
   \frac{ {\cal F}^2}{\nu }, 
   \\
   &  \label{eq:main_estimate_2}  \nu \int_0^t \| \g \vv (s) \|^2 ds \le \| \vv_0 \|^2 + ([t] + 1 ) \frac{{\cal
     F}^2}{\nu},
 \end{align}
where $\mathcal{F}:=\|\bff\|_{L^{2}_{uloc}(\R_+;V')}$.
\end{theorem}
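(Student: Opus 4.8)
The plan is to build the solution by the Faedo-Galerkin method on each finite interval $[0,T]$, to derive the estimates \eqref{eq:main_estimate_1}--\eqref{eq:main_estimate_2} \emph{uniformly in the Galerkin index $m$ and in $T$}, to extract a subsequence converging (locally in time) to a weak solution of \eqref{eq:NSE}, and finally to let $T\to\infty$ along a diagonal argument. Let $\{w_j\}_{j\ge1}$ be a Hilbert basis of $V$ consisting of eigenfunctions of the Stokes operator, let $V_m=\mathrm{span}\{w_1,\dots,w_m\}$, and let $\vv_m$ be the Galerkin approximation in $V_m$; its local existence follows from the Carath\'eodory theorem for ODE systems, since $\fv\in L^2(0,T;V')$ for every finite $T$ (indeed $\|\fv\|_{L^2(0,T;V')}^2\le([T]+1)\,\mathcal{F}^2$, the only place where finiteness of $T$ is used at this step). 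The $L^2_{uloc}$ hypothesis is genuinely needed only for the uniform-in-time bound below.

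\textbf{Energy estimates.} Testing the equation for $\vv_m$ against $\vv_m$ annihilates the convective term (and there is no pressure term, the basis being divergence-free), and gives for a.e.\ $t$
\begin{equation*}
\tfrac12\,\tfrac{d}{dt}\|\vv_m(t)\|^2+\nu\,\|\nabla\vv_m(t)\|^2=\langle\fv(t),\vv_m(t)\rangle\le\|\fv(t)\|_{V'}\,\|\nabla\vv_m(t)\|,
\end{equation*}
so that, by Young's inequality,
\begin{equation*}
\tfrac{d}{dt}\|\vv_m(t)\|^2+\nu\,\|\nabla\vv_m(t)\|^2\le\tfrac1\nu\,\|\fv(t)\|_{V'}^2 .
\end{equation*}
This already prevents blow-up of $\|\vv_m\|$, so $\vv_m$ is global; integrating over $[0,t]$ and bounding $\int_0^t\|\fv\|_{V'}^2\le([t]+1)\,\mathcal{F}^2$ yields \eqref{eq:main_estimate_2} for $\vv_m$. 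For \eqref{eq:main_estimate_1} I would use the Poincar\'e inequality $\|\uv\|^2\le C_\Om\|\nabla\uv\|^2$ on $V$ to turn the differential inequality into
\begin{equation*}
\tfrac{d}{dt}\|\vv_m(t)\|^2+\tfrac{\nu}{C_\Om}\,\|\vv_m(t)\|^2\le\tfrac1\nu\,\|\fv(t)\|_{V'}^2 ,
\end{equation*}
and then Gronwall's lemma with integrating factor $e^{\nu t/C_\Om}$, giving
\begin{equation*}
\|\vv_m(t)\|^2\le e^{-\nu t/C_\Om}\|\vv_0\|^2+\tfrac1\nu\int_0^t e^{-\nu(t-s)/C_\Om}\|\fv(s)\|_{V'}^2\,ds .
\end{equation*}
Splitting the last integral over the unit intervals $[t-k-1,\,t-k]$, on which the exponential weight is $\le e^{-\nu k/C_\Om}$ while $\int\|\fv\|_{V'}^2\le\mathcal{F}^2$, and summing the geometric series, one gets $\|\vv_m(t)\|^2\le\|\vv_0\|^2+\mathcal{F}^2/\bigl(\nu(1-e^{-\nu/C_\Om})\bigr)$; the elementary inequality $(1-e^{-x})^{-1}\le 1+x^{-1}$ then gives \eqref{eq:main_estimate_1} (in fact with the factor $3$ replaced by $1$, which is more than enough). \textbf{This uniform-in-time estimate is the main obstacle}: it is the one point where the classical Leray-Hopf argument does not apply verbatim, and where the structure of $L^2_{uloc}$ (summability of unit-interval integrals against a geometric weight) is exploited. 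All constants are independent of $m$ and of $t$.

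\textbf{Passage to the limit.} Fix $T>0$. The estimates give $\vv_m$ bounded in $L^\infty(0,T;H)\cap L^2(0,T;V)$; the standard bound $\|(\vv_m\cdot\nabla)\vv_m\|_{V'}\le C\|\vv_m\|_{L^4}^2$ together with the three-dimensional interpolation embedding $L^\infty(0,T;H)\cap L^2(0,T;V)\hookrightarrow L^{8/3}(0,T;L^4(\Om))$ bounds $\partial_t\vv_m$ in $L^{4/3}(0,T;V')$. By the Aubin-Lions-Simon lemma a subsequence converges strongly in $L^2(0,T;H)$, weakly in $L^2(0,T;V)$ and weakly-$*$ in $L^\infty(0,T;H)$; the strong convergence suffices to pass to the limit in the convective term, producing a weak solution of \eqref{eq:NSE} on $[0,T]$ that inherits the energy inequality. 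A diagonal extraction over $T=1,2,\dots$ furnishes a solution on all of $\R_+$ with $\vv\in L^2_{loc}(\R_+;V)\cap L^\infty(\R_+;H)$. Finally, \eqref{eq:main_estimate_1}--\eqref{eq:main_estimate_2} pass to the limit by weak (resp.\ weak-$*$) lower semicontinuity of the norms: they hold first for a.e.\ $t$, and then for every $t\ge0$ after redefining $\vv$ on a null set so that it is weakly continuous with values in $H$.
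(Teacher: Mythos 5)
Your proposal is correct, and for the one genuinely non-standard point — the uniform-in-time bound \eqref{eq:main_estimate_1} — it follows a different route from the paper. The paper never invokes Gronwall with an exponential weight: it works directly with the integrated energy inequality $\|\vv(\tau)\|^2+\nu\int_\xi^\tau\|\g\vv\|^2\,ds\le\|\vv(\xi)\|^2+\frac{1}{\nu}\int_\xi^\tau\|\fv\|_{V'}^2\,ds$ and runs an elementary induction on unit time steps: if $\|\vv(t)\|\le\|\vv(t+1)\|$, then Poincar\'e gives $\int_t^{t+1}\|\vv\|^2\le C_\Om\mathcal{F}^2/\nu^2$, and comparing $\|\vv(t+1)\|$ with the value at a near-minimizing time $\xi\in[t,t+1]$ yields the absolute bound $\|\vv(t)\|^2\le(1+C_\Om/\nu)\mathcal{F}^2/\nu$; the case of decreasing energy is handled by the inductive hypothesis, which is how the constant $3+C_\Om/\nu$ arises. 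Your argument instead applies Poincar\'e at the level of the differential inequality, uses the integrating factor $e^{\nu t/C_\Om}$, and sums the convolution integral over unit intervals as a geometric series, with $(1-e^{-x})^{-1}\le 1+x^{-1}$ giving the slightly sharper constant $1+C_\Om/\nu$ together with exponential decay of the influence of $\vv_0$ — a genuine (if modest) quantitative improvement. What the paper's version buys in exchange is that it uses only the energy inequality in integrated form between two times, so it applies verbatim to any function satisfying that inequality (e.g.\ directly to a Leray--Hopf solution) without needing the a.e.\ differential form that is clean only at the Galerkin level, where you correctly place it. The remaining steps of your proposal (global extension of the Galerkin ODE, $\partial_t\vv_m$ bounded in $L^{4/3}(0,T;V')$, Aubin--Lions, diagonal extraction over $T=1,2,\dots$, lower semicontinuity plus weak $H$-continuity to get the bounds for every $t$) coincide with what the paper treats as standard and delegates to the references, so no gap there.
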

\begin{remark}
  The weak solution $\bv$ shares most of the properties of the Leray-Hopf weak solutions,
  with estimates valid for all positive times.  Notice that we do not know whether or not
  this solution is unique. Anyway, it will not get \say{regular} as $t \to+\infty$, which
  is the feature of interest for our study.  As usual by regular we mean that it does not
  necessarily have the $L^2$-norm of the gradient (locally) bounded, hence that it is not
  a strong solution.
\end{remark}
%
%\papar {\sc 
\subsection{Long-time averaging}
This section is devoted to state the main results of the paper about long-time and
ensemble averages. 
\begin{theorem}
  \label{thm:main_theorem}
  Let be given $\vv_0 \in H$, $\bff\in L^{2}_{uloc}(\R_+;V')$, and let $\vv$ a
  global-in-time weak solution to the NSE~\eqref{eq:NSE}.  Then, there exist
\begin{enumerate}[a)]
\item a sequence $\{t_n\}_{n\in\N}$ such that $\lim\limits_{n \to \infty }t_n=+\infty$;
\item a vector field $\mean{\bv}\in V$;
\item vector field $\overline \bff \in V'$;
\item a vector field $\bB\in L^{3/2}(\Omega)^3$;
\item a second order tensor field $\reyn \in L^3(\Omega)^9$;
\end{enumerate} 
 such that it holds:
\begin{enumerate}[i)]
\item when $n \to \infty$,
    \begin{equation*}
      \begin{aligned}
        &M_{t_n}(\bv)\rightharpoonup
        \mean{\bv}\qquad\text{in }V,
        \\
        & M_{t_n} (\bff) \rightharpoonup
        \mean{\bff} \qquad\text{in }V',
        \\
        & M_{t_{n}}\big ((\bv\cdot\nabla)\,\bv\big )\rightharpoonup\bB
        \qquad \text{in } L^{3/2}(\Omega)^3,
        \\
        &M_{t_n}(\bv{'} \otimes \bv{'} )\rightharpoonup
        \reyn\qquad\text{in }L^3(\Omega)^9;\\
        & M_{t_n} ( < \fv, \vv > ) \to \,  <\overline \fv, \vm > + \overline{ < \fv', \vv' > },
      \end{aligned}
    \end{equation*}
    where $\vv' = \vv- \vm$, and $\fv' = \fv - \overline \fv$;
  \item the closed Reynolds equations~\eqref{eq:R} holds true in the weak sense: 
  \begin{equation}
  \label{eq:R}
  \left\{
    \begin{aligned}
     (\mean{\bv}\cdot\nabla)\, \mean{\bv} -\nu\Delta \mean{\bv}+\nabla
      \mean{p}+\nabla\cdot\reyn&=\mean{\bff}\qquad &\text{ in }\Omega,
      \\
      \nabla\cdot \mean{\bv}&=0\qquad &\text{ in }\Omega,
      \\
      \mean{\bv}&=\mathbf{0}\qquad &\text{ on }\Gamma;
    \end{aligned}  
  \right.
\end{equation}
  \item the following equalities $\bF = \bB - (\vm \cdot \g )\, \vm = \g \cdot
    \reyn$ are valid in $\mathcal{D}'(\Omega)$;
  \item the following energy balance holds true
  \begin{equation*}
%    \label{eq:energy_averaged}
    \nu\|\nabla\mean{\bv}\|^2+\int_\Omega
    \bF\cdot\mean{\bv}\,d\bx= \, <\mean{\bff},\mean{\bv} >;
  \end{equation*}
\item the turbulent dissipation $\E$ is bounded by the sum of the work of the Reynolds
  stress on the mean flow and the external turbulent fluxes,
\begin{equation}
  \label{eq:reyn_dissipative2} 
\E =  \nu  \mean{\|\nabla{\bv}'\|^2}\le \int_\Omega(\nabla\cdot
  \reyn)\cdot\mean{\bv}\,d\bx + \overline {< \fv', \vv' > };
\end{equation}
\item \label{it:foias_generalized}  if in addition the source term $\bff$ verifies:
\begin{equation}
  \label{eq:additional_ass_f}
  \exists \  \widetilde {\bff}\in V', \quad \text{such that} \quad
  \lim_{t\to+\infty}\int_{t}^{t+1}\|\bff(s)-\widetilde {\bff}\|_{V'}^2\,ds=0,
\end{equation}
then $\overline \bff = \widetilde {\bff}$ and $\overline {< \fv', \vv' > } = 0$; in
particular, the Reynolds stress $\reyn$ is dissipative in average, that
is~(\ref{eq:reyn_dissipative}) holds true.
\end{enumerate}
\end{theorem}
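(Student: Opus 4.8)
The plan is to establish the whole chain of claims by combining the uniform energy estimates of Theorem~\ref{thm:existence_temps_long} with weak compactness and the algebraic identities relating $M_t$ to the nonlinear and pressure terms. First I would derive the equation satisfied by $M_t(\vv)$: testing the NSE against a divergence-free test function and dividing by $t$, one obtains
\begin{equation*}
 (M_t((\vv\cdot\nabla)\vv),\varphi) + \nu(\nabla M_t(\vv),\nabla\varphi) = \langle M_t(\fv),\varphi\rangle + \tfrac{1}{t}\big((\vv_0,\varphi) - (\vv(t),\varphi)\big),
\end{equation*}
for all $\varphi\in V$, where the last two boundary terms are $O(1/t)$ thanks to~\eqref{eq:main_estimate_1}. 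From~\eqref{eq:main_estimate_2} we get $\|\nabla M_t(\vv)\| \le M_t(\|\nabla\vv\|) \le (M_t(\|\nabla\vv\|^2))^{1/2}$ bounded uniformly in $t$, hence $\{M_t(\vv)\}$ is bounded in $V$; extract $t_n\to\infty$ with $M_{t_n}(\vv)\rightharpoonup\vm$ in $V$. Similarly $\{M_t(\fv)\}$ is bounded in $V'$ (Jensen on the $L^2_{uloc}$ bound), extract a further subsequence converging weakly to some $\overline\fv\in V'$. For the nonlinear term, the pointwise-in-time bound $\|(\vv\cdot\nabla)\vv\|_{L^{3/2}} \lesssim \|\vv\|_{L^6}\|\nabla\vv\| \lesssim \|\nabla\vv\|^2$ (Sobolev in 3D) combined with~\eqref{eq:main_estimate_2} shows $\{M_t((\vv\cdot\nabla)\vv)\}$ is bounded in $L^{3/2}(\Om)^3$; pass to a subsequence to get the weak limit $\bB$. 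Likewise $\vv'\otimes\vv'$ is bounded in $L^{3}$ in time-averaged $L^{3/2}$?—one must be a little careful: $\|\vv'\otimes\vv'\|_{L^{3/2}}\lesssim\|\vv'\|_{L^3}^2$ and $\vv'\in L^\infty_t L^6$ gives $M_t(\vv'\otimes\vv')$ bounded in $L^3(\Om)^9$ directly since $\|\vv'(s)\otimes\vv'(s)\|_{L^3}\le\|\vv'(s)\|_{L^6}^2$ and $\|\vv'(s)\|_{L^6}\lesssim\|\nabla\vv(s)\| + \|\nabla\vm\|$—but this is only in $L^2_t$, so $M_t$ of it is controlled, giving $\reyn\in L^3(\Om)^9$.

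Once these weak limits are in hand, item~(ii) follows by passing to the limit in the averaged equation: the linear terms pass trivially, $M_{t_n}((\vv\cdot\nabla)\vv)\rightharpoonup\bB$, and the boundary remainder vanishes. Recovering the pressure $\overline p$ is done via de Rham's theorem, since the left side minus $\bB + \overline\fv$ is a gradient in $\mathcal D'(\Om)$. For item~(iii), the key identity is the pointwise decomposition $(\vv\cdot\nabla)\vv = \nabla\cdot(\vv\otimes\vv)$ (using $\nabla\cdot\vv=0$), together with the Reynolds splitting $\vv\otimes\vv = \vm\otimes\vm + \vv'\otimes\vm + \vm\otimes\vv' + \vv'\otimes\vv'$; averaging, the cross terms $M_t(\vv'\otimes\vm)$ and $M_t(\vm\otimes\vv')$ tend to zero because $M_{t_n}(\vv')\to 0$ (this is where one needs that $M_{t_n}(\vv)\rightharpoonup\vm$ implies $M_{t_n}(\vv-\vm)\rightharpoonup 0$, and the products with the fixed $\vm$ converge), so $\bB = \nabla\cdot(\vm\otimes\vm) + \nabla\cdot\reyn = (\vm\cdot\nabla)\vm + \nabla\cdot\reyn$ in $\mathcal D'$, i.e.\ $\bF := \bB - (\vm\cdot\nabla)\vm = \nabla\cdot\reyn$. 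Item~(iv) is obtained by testing~\eqref{eq:R} against $\vm\in V$ itself (legitimate since $\vm\in V$ and the equation holds in $\mathcal D'$ with the appropriate integrability of each term in the duality $V'$–$V$), using $((\vm\cdot\nabla)\vm,\vm)=0$ and that the pressure term drops.

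The heart of the matter, and the step I expect to be the main obstacle, is item~(v): the turbulent dissipation inequality~\eqref{eq:reyn_dissipative2}. The strategy is to start from the energy \emph{inequality} for the weak solution on $[0,t]$,
\begin{equation*}
 \tfrac12\|\vv(t)\|^2 + \nu\int_0^t\|\nabla\vv(s)\|^2\,ds \le \tfrac12\|\vv_0\|^2 + \int_0^t\langle\fv(s),\vv(s)\rangle\,ds,
\end{equation*}
divide by $t$, and let $t=t_n\to\infty$: the boundary terms vanish, giving $\nu\, \liminf_n M_{t_n}(\|\nabla\vv\|^2) \le \liminf_n M_{t_n}(\langle\fv,\vv\rangle)$, and by the claimed convergence (last line of item~(i)) the right side is $\langle\overline\fv,\vm\rangle + \overline{\langle\fv',\vv'\rangle}$. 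On the left, one expands $\|\nabla\vv\|^2 = \|\nabla\vm\|^2 + 2(\nabla\vm,\nabla\vv') + \|\nabla\vv'\|^2$; averaging, the cross term $M_{t_n}((\nabla\vm,\nabla\vv')) = (\nabla\vm,\nabla M_{t_n}(\vv'))\to 0$ since $M_{t_n}(\vv')\rightharpoonup 0$ in $V$, so $\liminf_n M_{t_n}(\|\nabla\vv\|^2) \ge \|\nabla\vm\|^2 + \liminf_n M_{t_n}(\|\nabla\vv'\|^2) = \|\nabla\vm\|^2 + \tfrac1\nu\E$ by definition of $\E$ (with $\mean{\|\nabla\vv'\|^2}$ understood as this $\liminf$). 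Combining with item~(iv) rewritten as $\nu\|\nabla\vm\|^2 = \langle\overline\fv,\vm\rangle - \int_\Om\bF\cdot\vm$ and $\bF=\nabla\cdot\reyn$, one subtracts to obtain exactly $\E \le \int_\Om(\nabla\cdot\reyn)\cdot\vm + \overline{\langle\fv',\vv'\rangle}$. The delicate points here are: (a) justifying the convergence $M_{t_n}(\langle\fv,\vv\rangle)\to\langle\overline\fv,\vm\rangle+\overline{\langle\fv',\vv'\rangle}$, which requires a separate argument—presumably one writes $\langle\fv,\vv\rangle = \langle\overline\fv,\vm\rangle + \langle\fv',\vm\rangle + \langle\overline\fv,\vv'\rangle + \langle\fv',\vv'\rangle$ and shows the two middle averages vanish, while $\overline{\langle\fv',\vv'\rangle}$ is \emph{defined} as the limit of $M_{t_n}(\langle\fv',\vv'\rangle)$ along the subsequence (this may itself need passing to a further subsequence to ensure the limit exists); and (b) the lower-semicontinuity/$\liminf$ bookkeeping, making sure all the $\liminf$'s are taken along the \emph{same} subsequence $\{t_n\}$, possibly after one more diagonal extraction. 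Finally, item~(vi): under~\eqref{eq:additional_ass_f}, the $L^2_{uloc}$-convergence $\fv\to\widetilde\fv$ forces $M_{t_n}(\fv)\to\widetilde\fv$ strongly in $V'$ (hence $\overline\fv=\widetilde\fv$), and moreover $M_{t_n}(\|\fv'\|_{V'}^2)\to 0$; then by Cauchy–Schwarz $|M_{t_n}(\langle\fv',\vv'\rangle)| \le (M_{t_n}(\|\fv'\|_{V'}^2))^{1/2}(M_{t_n}(\|\vv'\|_V^2))^{1/2} \to 0$ because the second factor stays bounded by~\eqref{eq:main_estimate_2}. Thus $\overline{\langle\fv',\vv'\rangle}=0$, and~\eqref{eq:reyn_dissipative2} collapses to $\E\le\int_\Om(\nabla\cdot\reyn)\cdot\vm$, whence $0\le\int_\Om(\nabla\cdot\reyn)\cdot\vm$, i.e.~\eqref{eq:reyn_dissipative}.
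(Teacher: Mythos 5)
Your proposal is correct and follows essentially the same route as the paper: averaging the weak formulation, extracting subsequences from the uniform bounds of Lemma~\ref{lem:energy-estimates}, identifying $\bF=\nabla\cdot\reyn$ via the Reynolds splitting of $\vv\otimes\vv$ with vanishing cross-term averages, testing with $\vm$ for the energy balance, and combining the time-averaged energy inequality with the orthogonality identity~\eqref{eq:new_Foias} and the decomposition of $M_{t_n}(\langle\fv,\vv\rangle)$ for items (v)--(vi). The only cosmetic differences are that the paper extracts the weak $L^3$ limit of $M_{t_n}(\vv\otimes\vv)$ and defines $\reyn$ by subtracting $\vm\otimes\vm$ (equivalent to your direct limit of $M_{t_n}(\vv'\otimes\vv')$), and the "delicate points'' you flag (the further subsequence defining $\overline{\langle\fv',\vv'\rangle}$, the $\liminf$ bookkeeping, and the Hopf/Prodi-type justification of the averaged weak formulation) are resolved in the paper exactly as you anticipate.
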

%\noindent Note that this results is not ``empty,'' thanks to Theorem \ref{thm:main_theorem}. 

%
Our second result has to be compared with results in~Layton~\textit{et al.}~\cite{LL2016,
  Lay2014}, where the long-time averages are taken for an ensemble of solutions.
\begin{theorem}
  \label{thm:main_ensemble}
  Let be given a sequence $\suite \bff k\subset L^q(\Omega)$ converging weakly to some
  $\smean{\bff}$ in $L^q(\Omega)$, with $q>\frac{6}{5}$ and let $\{\mean{\bv}^ k\}_{k \in
    \N} $ be the associated long-time average of velocities, whose existence has been
  proved in Theorem~\ref{thm:main_theorem}. Then, the sequence of arithmetic averages of
  the long-time limits $\{\smean{\bv}^n\}_{n\in\N}$, defined as
\begin{equation*}
  \smean{\bv}^n :=  \frac{1}{n}\sum_{k=1}^n\mean{\bv}^k,
\end{equation*}
converges weakly, as $n\to+\infty$, in $V$ to some $\smean{\bv}$, which satisfies the
following system of Reynolds type
\begin{equation*}
%  \label{eq:R2}
  \left\{
    \begin{aligned}
      \big(\smean{\bv}\cdot\nabla\big)\, \smean{\bv} -\nu\Delta \smean{\bv}+\nabla
      \smean{p}+\nabla\cdot\smean{\reyn}&=\smean{\bff}\qquad &\text{ in }\Omega,
      \\
      \nabla\cdot \smean{\bv}&=0\qquad &\text{ in }\Omega,
      \\
      \smean{\bv}&=\mathbf{0}\qquad &\text{ on }\Gamma,
  \end{aligned}  
\right.
\end{equation*}
where $\smean{\reyn}$ is dissipative in average, that is more precisely
\begin{equation*}
  0 \le\frac{1}{|\Om|} \int_\Omega\big(\nabla\cdot\smean{\reyn}\big)\cdot\smean{\bv}\,d\bx.  
\end{equation*}
\end{theorem}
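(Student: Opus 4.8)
The plan is to build on Theorem~\ref{thm:main_theorem} applied to each individual force $\bff_k$, and then to average the resulting Reynolds systems. First I would, for each $k\in\N$, invoke Theorem~\ref{thm:main_theorem} to obtain $\mean{\bv}^k\in V$, $\reyn_k\in L^3(\Omega)^9$, $\bB_k\in L^{3/2}(\Omega)^3$, $\plm_k$ and the identity $\bF_k:=\bB_k-(\mean{\bv}^k\cdot\nabla)\mean{\bv}^k=\nabla\cdot\reyn_k$ in $\mathcal{D}'(\Omega)$, together with the energy balance
\begin{equation*}
  \nu\|\nabla\mean{\bv}^k\|^2+\int_\Omega\bF_k\cdot\mean{\bv}^k\,d\bx=\,<\mean{\bff}_k,\mean{\bv}^k>,
\end{equation*}
and the dissipativity-in-average inequality $0\le(\nabla\cdot\reyn_k,\mean{\bv}^k)$, which holds because $\bff_k$ is time independent and hence trivially satisfies \eqref{eq:additional_ass_f} with $\widetilde{\bff}=\bff_k$. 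Since $\suite\bff k$ converges weakly in $L^q(\Omega)$ with $q>6/5$, it is bounded there; because $q>6/5$ gives $L^q\hookrightarrow V'$ (as $6/5$ is the conjugate of $6$, the Sobolev exponent in 3D), the norms $\|\bff_k\|_{V'}$ are uniformly bounded, so the $V$-estimate from Theorem~\ref{thm:existence_temps_long} / Theorem~\ref{thm:main_theorem} yields a uniform bound $\|\mean{\bv}^k\|_V\le C$. This in turn bounds $\mean{\bv}^k\otimes\mean{\bv}^k$ and $(\mean{\bv}^k\cdot\nabla)\mean{\bv}^k$ uniformly in $L^3$ and $L^{3/2}$ respectively; combined with the uniform $L^3$ bound on $\reyn_k$ (which comes from the construction in Theorem~\ref{thm:main_theorem}, where $\reyn_k$ is a weak limit controlled by the same energy quantities), all the relevant sequences are bounded.

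Next I would Cesàro-average: set $\smean{\bv}^n=\frac1n\sum_{k=1}^n\mean{\bv}^k$, $\smean{\reyn}^n=\frac1n\sum_{k=1}^n\reyn_k$, $\smean{\bB}^n=\frac1n\sum_{k=1}^n\bB_k$, $\smean{\bff}^n=\frac1n\sum_{k=1}^n\bff_k$, and similarly for the pressures. Each of these is a bounded sequence in the respective reflexive space ($V$, $L^3(\Omega)^9$, $L^{3/2}(\Omega)^3$, $L^q(\Omega)$, $W^{1,5/4}/\R$ or wherever the pressure lives), so by weak compactness and a diagonal argument we extract a subsequence along which $\smean{\bv}^n\rightharpoonup\smean{\bv}$ in $V$, $\smean{\reyn}^n\rightharpoonup\smean{\reyn}$ in $L^3$, etc.; moreover $\smean{\bff}^n\to\smean{\bff}$ weakly in $L^q$ since Cesàro averages of a weakly convergent sequence converge to the same weak limit. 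Passing to the limit in the Reynolds system for $\smean{\bv}^n$ — which is just the arithmetic mean of the linear-plus-quadratic-plus-Reynolds systems — is immediate for the \emph{linear} terms $-\nu\Delta\smean{\bv}^n$, $\nabla\smean{p}^n$, $\nabla\cdot\smean{\reyn}^n$ and the right-hand side. The genuinely nonlinear quantity $(\smean{\bv}^n\cdot\nabla)\smean{\bv}^n$ is handled by introducing the averaged Reynolds-type stress $\mathbf{R}^n:=\frac1n\sum_{k=1}^n\mean{\bv}^k\otimes\mean{\bv}^k-\smean{\bv}^n\otimes\smean{\bv}^n$, so that $\frac1n\sum_k(\mean{\bv}^k\cdot\nabla)\mean{\bv}^k=(\smean{\bv}^n\cdot\nabla)\smean{\bv}^n+\nabla\cdot\mathbf{R}^n$ (using incompressibility of each $\mean{\bv}^k$), and absorbing $\mathbf{R}^n$ into the stress: defining $\smean{\reyn}$ as the weak-$L^3$ limit of $\smean{\reyn}^n+\mathbf{R}^n$. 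For the compactness needed to identify $\smean{\bv}^n\otimes\smean{\bv}^n\rightharpoonup\smean{\bv}\otimes\smean{\bv}$ one uses that $V\hookrightarrow\hookrightarrow L^4(\Omega)^3$ compactly in 3D (Rellich), so weak convergence in $V$ upgrades to strong convergence in $L^4$ along the subsequence, whence $\smean{\bv}^n\otimes\smean{\bv}^n\to\smean{\bv}\otimes\smean{\bv}$ strongly in $L^2$, in particular weakly in $L^{3/2}$; this also lets us pass to the limit in $(\smean{\bv}^n\cdot\nabla)\smean{\bv}^n$ tested against $V$. The limiting system is then the claimed Reynolds system with $\nabla\cdot\smean{\reyn}=\bF$ where $\bF$ is the weak limit of $\frac1n\sum_k\bF_k$.

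Finally, for the dissipativity assertion $0\le\frac1{|\Omega|}\int_\Omega(\nabla\cdot\smean{\reyn})\cdot\smean{\bv}\,d\bx$, the strategy is to average the individual energy balances and pass to the limit. Summing the balance for $\mean{\bv}^k$ over $k=1,\dots,n$ and dividing by $n$ gives
\begin{equation*}
  \frac{\nu}{n}\sum_{k=1}^n\|\nabla\mean{\bv}^k\|^2+\frac1n\sum_{k=1}^n\int_\Omega\bF_k\cdot\mean{\bv}^k\,d\bx=\frac1n\sum_{k=1}^n\,<\mean{\bff}_k,\mean{\bv}^k>.
\end{equation*}
The individual dissipativity $0\le(\nabla\cdot\reyn_k,\mean{\bv}^k)=-\int_\Omega\bF_k\cdot\mean{\bv}^k$ (integrating by parts and using $\bF_k=\nabla\cdot\reyn_k$) means $\int_\Omega\bF_k\cdot\mean{\bv}^k\le0$ for every $k$, hence $\frac1n\sum_k\int_\Omega\bF_k\cdot\mean{\bv}^k\le0$. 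On the other hand, the limiting energy balance for $\smean{\bv}$ reads $\nu\|\nabla\smean{\bv}\|^2+\int_\Omega\bF\cdot\smean{\bv}=\,<\smean{\bff},\smean{\bv}>$, and by weak lower semicontinuity of the norm $\nu\|\nabla\smean{\bv}\|^2\le\liminf\frac\nu n\sum_k\|\nabla\mean{\bv}^k\|^2$, while the right-hand sides match in the limit (strong $L^4$ convergence of $\smean{\bv}^n$ against weak $L^q$ convergence of $\smean{\bff}^n$, using $q>6/5$ so the pairing is continuous). Chaining these gives $\int_\Omega\bF\cdot\smean{\bv}\,d\bx\le\liminf\frac1n\sum_k\int_\Omega\bF_k\cdot\mean{\bv}^k\,d\bx\le0$, i.e. $0\le-\int_\Omega\bF\cdot\smean{\bv}=\int_\Omega(\nabla\cdot\smean{\reyn})\cdot\smean{\bv}$, which is the desired inequality after dividing by $|\Omega|$.

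\medskip

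\noindent\textbf{Main obstacle.} The delicate point is the bookkeeping of the nonlinear term under two successive weak limits: one must be careful that the ``Reynolds-type'' discrepancy $\mathbf{R}^n$ between the average of the quadratic terms and the quadratic term of the average is genuinely controlled and converges to a limit that can be merged into $\smean{\reyn}$ while preserving the identity $\bF=\nabla\cdot\smean{\reyn}$ in $\mathcal{D}'(\Omega)$; this requires the compact embedding $V\hookrightarrow\hookrightarrow L^4$ to turn weak $V$-convergence of $\smean{\bv}^n$ into strong $L^4$-convergence, and also an argument that the weak-$L^3$ limits of $\smean{\reyn}^n$ and of $\frac1n\sum_k\mean{\bv}^k\otimes\mean{\bv}^k$ exist separately (not merely their sum), so the decomposition makes sense in the limit. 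A secondary subtlety is ensuring the limiting energy balance actually holds with equality and not merely an inequality — this needs the convergence of the forcing pairing to be exact, which is where the hypothesis $q>6/5$ (equivalently $L^q\hookrightarrow V'$ with, in fact, enough room for a compactness/duality argument pairing strong $L^{q'}\supset L^4$-type convergence of $\smean{\bv}^n$ against weak $L^q$-convergence of $\smean{\bff}^n$) is essential.
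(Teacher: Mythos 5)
Your overall strategy --- apply Theorem~\ref{thm:main_theorem} to each stationary $\bff^k$ (so that \eqref{eq:additional_ass_f} holds with $\widetilde{\bff}=\bff^k$ and each individual stress is dissipative), Ces\`aro-average the stationary Reynolds systems, extract weak limits from the uniform $V$-bound, and obtain dissipativity by comparing the limit energy balance with a liminf of averaged energy inequalities --- is essentially the paper's. The genuine gap is at the one step where $q>\frac{6}{5}$ must do real work. After summing the individual energy balances, the forcing term you must pass to the limit in is the average of the \emph{diagonal} pairings $\frac1n\sum_{k=1}^n<\bff^k,\mean{\bv}^k>$, and the mechanism you invoke (strong $L^{q'}$-convergence of $\smean{\bv}^n$ by Rellich, paired against weak $L^q$-convergence of $\smean{\bff}^n$) only handles the pairing of the averages $<\smean{\bff}^n,\smean{\bv}^n>$: the individual $\mean{\bv}^k$ are merely bounded in $V$ and need not converge in any topology, so a weak--strong argument cannot be applied termwise, and weak convergence of $\{\bff^k\}$ alone would not suffice for this step. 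What is needed --- and what the paper isolates as Lemma~\ref{lem:Sobolev} --- is that for $q>\frac{6}{5}$ the embedding $L^q(\Omega)\hookrightarrow V'$ is compact, so $\bff^k\rightharpoonup\smean{\bff}$ in $L^q(\Omega)$ upgrades to $\bff^k\to\smean{\bff}$ \emph{strongly} in $V'$ for the whole sequence; then $\frac1n\sum_{k}<\bff^k,\mean{\bv}^k>\ =\ <\smean{\bff},\smean{\bv}^n>+\frac1n\sum_{k}<\bff^k-\smean{\bff},\mean{\bv}^k>\ \to\ <\smean{\bff},\smean{\bv}>$ using only the uniform $V$-bound, and your chain closes. (Your Rellich-on-the-velocities device is otherwise a legitimate variant of the paper's trick, which passes to the limit in the formulation containing $\smean{\bB}^n$ --- linear in the data $\bB^k$ --- and defines $\smean{\bF}:=\smean{\bB}-(\smean{\bv}\cdot\nabla)\,\smean{\bv}$ only at the limit; but it cannot replace the compactness on the forces at this point.)

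Second, your dissipativity chain is sign-inconsistent as written. Since $\bF^k=\nabla\cdot\reyn^k$, one has $(\nabla\cdot\reyn^k,\mean{\bv}^k)=+\int_\Omega\bF^k\cdot\mean{\bv}^k\,d\bx$ (no integration by parts, no minus sign), so the stationary case of Theorem~\ref{thm:main_theorem} gives $\int_\Omega\bF^k\cdot\mean{\bv}^k\,d\bx\ge0$, not $\le0$; your chaining then uses semicontinuity in the wrong direction (to get $\int_\Omega\bF\cdot\smean{\bv}\,d\bx\le\liminf\frac1n\sum_k\int_\Omega\bF^k\cdot\mean{\bv}^k\,d\bx$ you would need $\nu\|\nabla\smean{\bv}\|^2\ge\limsup\frac{\nu}{n}\sum_k\|\nabla\mean{\bv}^k\|^2$, which is false in general), and the last line flips the sign once more by writing $\int_\Omega(\nabla\cdot\smean{\reyn})\cdot\smean{\bv}\,d\bx=-\int_\Omega\bF\cdot\smean{\bv}\,d\bx$, contradicting your own identification $\nabla\cdot\smean{\reyn}=\bF$. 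With the correct signs the argument is shorter and matches the paper: $\nu\|\nabla\mean{\bv}^k\|^2\le\ <\bff^k,\mean{\bv}^k>$ for each $k$, hence by convexity and weak lower semicontinuity $\nu\|\nabla\smean{\bv}\|^2\le\liminf\frac{\nu}{n}\sum_k\|\nabla\mean{\bv}^k\|^2\le\lim\frac1n\sum_k<\bff^k,\mean{\bv}^k>\ =\ <\smean{\bff},\smean{\bv}>$, and comparison with the limit energy balance $\nu\|\nabla\smean{\bv}\|^2+\int_\Omega(\nabla\cdot\smean{\reyn})\cdot\smean{\bv}\,d\bx=\ <\smean{\bff},\smean{\bv}>$ gives the claim directly. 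Two smaller slips: the stress in the limit system is not the weak limit of $\frac1n\sum_k\reyn^k$ alone but must incorporate the limit of your $\mathbf{R}^n$ (equivalently $\nabla\cdot\smean{\reyn}=\smean{\bB}-(\smean{\bv}\cdot\nabla)\,\smean{\bv}$, which is the object your energy-balance argument actually concerns), and for the pairing against weak $L^q$ convergence you need strong convergence of $\smean{\bv}^n$ in $L^{q'}(\Omega)$ with $q'=q/(q-1)<6$ (available by Rellich), strong $L^4$ being enough only when $q\ge\frac43$.
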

In this case we do not have a sharp lower bound on the dissipation as in
Theorem~\ref{thm:main_theorem}, since here the averaging is completely different and the
fluctuations are not those emerging in long-time averaging. Nevertheless, the main
statement is in the same spirit of the first proved result.
\section{Navier-Stokes equations with uniformly-local source terms}
\label{sec:NSE-estimates}
This section is devoted to sketch a proof of Theorem~\ref{thm:existence_temps_long}.  Most
of the arguments are quite standard and we will give appropriate references at each step,
to focus on what seems (at least to us) non-standard when ${\bf f} \in
L^2_{uloc}(\R_+;V')$; especially the proof of the uniform
$L^2$-estimate~\eqref{eq:energy_1}, which is the building block for the results of the
present paper. Before doing this, we introduce the function spaces we will use, and
precisely define the notion of weak solutions we will deal with.
\subsection{Functional setting}
\label{sec:notation}
Let $\Omega\subset \R^3$ be a bounded open set with Lipschitz boundary $\partial
\Omega$. This is a sort of minimal assumption of regularity on the domain, in order to
have the usual properties for Sobolev spaces and to characterize in a proper way
divergence-free vector fields in the context of Sobolev spaces, see for instance
Constantin and Foias~\cite{CF1988}, Galdi~\cite{Gal2000a,Gal2011}, Girault and
Raviart~\cite{GR86}, Tartar~\cite{LT06}.

We use the customary Lebesgue spaces $(L^p(\Omega),\|\,.\,\|_p)$ and Sobolev spaces
$(W^{1,p}(\Omega),\|\,.\,\|_{1,p})$.  For simplicity, we denote the $L^2$-norm simply by
$\|\,.\,\|$ and we write $H^{1}(\Omega):=W^{1,2}(\Omega)$. For a given sequence
$\{x_n\}_{n\in\N}\subset X$, where $(X,\|\,.\,\|_X)$ is Banach space, we denote by
$x_n\rightarrow x$ the strong convergence, while by $x_n\rightharpoonup x$ the weak one.

As usual in mathematical fluid dynamics, we use the following spaces
\begin{equation*}
%\label{eq:space_functions}
  \begin{aligned}
  & {\cal V} = \{ \boldsymbol{\varphi} \in {\cal D}(\Om)^3, \, \, \g \cdot
  \boldsymbol{\varphi} = 0 \}, 
  \\
    &H =% L^2_{\sigma}(\Omega)=
    \left\{\bv\in L^2(\Omega)^3, \ \nabla\cdot \bv=0,\
      \bv\cdot\mathbf{n}=0 \text{ on }\Gamma\right\},
    \\
    &V = %H^1_{0,\sigma}(\Omega)=
    \left\{\bv\in H^1_{0}(\Omega)^3, \,  \nabla\cdot \bv=0\right\},
  \end{aligned}
\end{equation*}
and we recall that ${\cal V}$ is dense in $H$ and $V$ for their respective
topologies~\cite{GR86, LT06}.

Let $(X,\|\,.\,\|_X)$ be a Banach space, we use the Bochner spaces $L^p(I;X)$, for $I=
[0,T]$ (for some  $T>0$) or $I=\R_+$ equipped with the norm
\begin{equation*}
\| u \|_{L^p(I; X)} := \left\{
    \begin{aligned}
      &\left( \int_I \| u(s) \|^p_X ds \right )^{\frac{1}{p}}\qquad \textrm{ for }1\leq p<\infty,
      \\
      &\textrm{ess sup}_{s\in I} \| u(s) \|_X\qquad \textrm{ for }p=+\infty.
    \end{aligned}
\right.\end{equation*}
 
The existence of weak solutions for the NSE~(\ref{eq:NSE}) is generally proved in the
literature when $\vv_0 \in H$ and the source term ${\bf f} \in L^2(I; V')$, or
alternatively when the source term is a given constant element of $V'$.  In order to study
the long-time behavior of weak solutions of the NSE~(\ref{eq:NSE}), we aim to enlarge the
class of function spaces allowed for the source term $\fv$, to catch a more complex
behavior than that coming from constant external forces, as initially developed
in~\cite{Lew2015}. To do so, we deal with ``uniformly-local'' spaces, as defined below in
the most general setting.
\begin{definition}
  Let be given $p\in[1,+\infty[$. We define $L^{p}_{uloc}(\R_+;X)$ as the space of
  strongly measurable functions $f:\R_+\to X$ such that
\begin{equation*}
%\label{eq:lploc} 
\|f\|_{L^{p}_{uloc}(X)} :=\left[\sup_{t\geq0}\int_{t}^{t+1}\|f(s)\|_{X}^{p} ds \right]^{1/p}<+\infty.
    \end{equation*}
\end{definition}
It is easily checked that the spaces $L^p_{uloc}(\R_{+};X)$ are Banach spaces strictly containing
both the constant $X$-valued functions, and also $L^p(\R_+; X)$, as illustrated
by the following elementary lemma.
\begin{lemma}
  Let be given $f\in C(\R_+; X)$ converging to a limit $\ell \in X$, when
  $t\to+\infty$. Then, for any $p\in[1,+\infty[$, we have that $f \in
  L^{p}_{uloc}(\R_{+};X)$, and there exists $T>0$ such that
%
%\marginpar{$(a+b)^{p}\leq 2^{p-1}(a^{p}+b^{p})$, do you agree with change here? {\color{blue} Yes}}
\begin{equation*}
  \|f\|_{L^{p}_{uloc}(X)} \le \left [\sup_{ t \in [0, T+1]} \|f (t)\|_X^p + 2^{p-1}(1+\|\ell\|_X)^p \right ]^{\frac{1}{p}} .  
\end{equation*}
\end{lemma}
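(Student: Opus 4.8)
The plan is to prove the elementary lemma by a direct estimate, splitting the half-line into the initial transient part where $f$ is merely continuous (hence bounded on compacts) and the tail where $f$ is close to its limit $\ell$. Since $f$ is continuous on $\R_+$ and converges to $\ell$ as $t\to+\infty$, pick $T>0$ such that $\|f(t)-\ell\|_X\le 1$ for all $t\ge T$; this is possible by the definition of the limit. On $[0,T+1]$ the function $t\mapsto\|f(t)\|_X^p$ is continuous, hence attains a finite maximum $\sup_{t\in[0,T+1]}\|f(t)\|_X^p$.

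First I would bound $\int_t^{t+1}\|f(s)\|_X^p\,ds$ for an arbitrary $t\ge0$ by distinguishing two cases. If $t\le T$, then $[t,t+1]\subset[0,T+1]$, so $\int_t^{t+1}\|f(s)\|_X^p\,ds\le\sup_{s\in[0,T+1]}\|f(s)\|_X^p$. If $t>T$, then for every $s\in[t,t+1]$ we have $s\ge T$, so $\|f(s)\|_X\le\|f(s)-\ell\|_X+\|\ell\|_X\le 1+\|\ell\|_X$; applying the elementary convexity inequality $(a+b)^p\le 2^{p-1}(a^p+b^p)$ — or more simply just $\|f(s)\|_X^p\le(1+\|\ell\|_X)^p$ — and integrating over the unit interval gives $\int_t^{t+1}\|f(s)\|_X^p\,ds\le(1+\|\ell\|_X)^p\le 2^{p-1}(1+\|\ell\|_X)^p$. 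In either case the integral is bounded by $\sup_{s\in[0,T+1]}\|f(s)\|_X^p+2^{p-1}(1+\|\ell\|_X)^p$.

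Taking the supremum over $t\ge0$ and then the $p$-th root yields exactly the claimed bound
\begin{equation*}
\|f\|_{L^{p}_{uloc}(X)}\le\left[\sup_{t\in[0,T+1]}\|f(t)\|_X^p+2^{p-1}(1+\|\ell\|_X)^p\right]^{1/p},
\end{equation*}
which in particular is finite, so $f\in L^p_{uloc}(\R_+;X)$. Strong measurability of $f$ follows from continuity. There is essentially no obstacle here: the only mild point is to make sure the two cases $t\le T$ and $t>T$ together cover all $t\ge0$ while keeping the transient contribution on the fixed compact interval $[0,T+1]$ (note the "$+1$", needed so that $[t,t+1]\subset[0,T+1]$ whenever $t\le T$), and to use a crude-but-sufficient bound $\|f(s)\|_X\le 1+\|\ell\|_X$ on the tail rather than anything sharp. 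The factor $2^{p-1}$ in the statement is simply slack absorbed to make the formula uniform; one could equally write $(1+\|\ell\|_X)^p$.
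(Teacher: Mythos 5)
Your argument is correct and follows essentially the same route as the paper: choose $T$ so that $\|f(t)-\ell\|_X\le 1$ for $t>T$, bound $\int_t^{t+1}\|f(s)\|_X^p\,ds$ by the supremum of $\|f\|_X^p$ on $[0,T+1]$ when $t\le T$ and by $2^{p-1}(1+\|\ell\|_X)^p$ when $t>T$, then take the supremum over $t$ and the $p$-th root. Your remark that the factor $2^{p-1}$ is merely slack is also consistent with the paper, which obtains it from the convexity inequality $(a+b)^p\le 2^{p-1}(a^p+b^p)$ on the tail.
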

\begin{proof} 
  As $\zoom \ell = \lim_{t \to+\infty} f(t)$, there exists $T >0$ such that:
  $\forall\,t>T$, $\|f(t) -\ell\|_X \le 1$.  In particular, it holds
\begin{equation*}
  \int_t^{t+1}  \| f(s) \|^p_X\, ds \le 2^{p-1}(1+ \| \ell \|_X)^p\qquad \text{for }t>T,
\end{equation*}
while for all $t \in[0, T]$, 
\begin{equation*}
  \int_t^{t+1}  \| f(s) \|^p_{X}\,ds  \le \sup_{t \in [0, T+1] } \| f(t) \|_X^p,
\end{equation*}
hence the result.
\end{proof} 
However, it easy to find examples of discontinuous functions in $L^{p}_{uloc}(\R_+;X)$
which are not converging when $t \to+\infty$, and which are not belonging to
$L^p(\R_+;X)$.
\subsection{Weak solutions}
There are many ways of defining weak solutions to the NSE (see also
P.-L.~Lions~\cite{PL96}). Since we are considering the incompressible case, the pressure
is treated as a Lagrange multiplier. Following the pioneering idea developed by
J.~Leray~\cite{Ler1934}, the NSE are projected over spaces of divergence-free
functions. This is why when we talk about weak solutions the NSE, only the velocity $\vv$
is mentioned, not the pressure.

As in J.-L.~Lions~\cite{JLL69}, we give the following definition of weak solution, see
also Temam~\cite[Ch.~III]{Tem1977b}.
\begin{definition}[Weak solution]
  Given $\bv_{0}\in H$ and ${\bf f} \in L^2(I; V')$ we say that $\vv$ is a weak solution
  over the interval $I = [0,T]$ if the following items are fulfilled:
  \begin{enumerate}[i)]
  \item the vector field $\vv$ has the following regularity properties
    \begin{equation*}
      \vv \in L^2 (I; V) \cap L^\infty(I; H),
    \end{equation*}
    and is weakly continuous from $I$ to $H$, while
    $\lim\limits_{t\to0^{+}}\|\vv(t)-\vv_{0}\|_{H}=0$;
  \item for all $\boldsymbol{\varphi} \in {\cal V}$,
    \begin{equation*}
      \begin{aligned}
        \frac{d}{dt} \int_\Om \vv (t, \x) \cdot \boldsymbol{\varphi}(\x) \,d\x &- \int_\Om
        \vv(t, \x) \otimes \vv(t, \x) : \g \boldsymbol{\varphi}(\x)\,d\x
        \\
        & \zoom + \nu \int_\Om \g \vv (t, \x) : \g \boldsymbol{\varphi}(\x)\,d\x =\ 
        <{\bf f}(t), \boldsymbol{\varphi} > ,
      \end{aligned}
    \end{equation*}
    holds true in $\mathcal{D}'(I)$;
  \item the energy inequality
    \begin{equation}
      \label{eq:energy}
      \frac{d}{dt}\frac{1}{2}\|\bv(t)\|^{2}+\nu\|\nabla\bv(t)\|^{2}\leq
      \ <\bff(t),\bv(t) > ,
    \end{equation}
    holds in ${\cal D}'(I)$, where we write $\vv(t)$ instead of $\vv (t, \cdot)$ for
    simplicity.
  \end{enumerate}
  When $\bff\in L^{2}(0,T;V')$ and $\bv$ is a weak solution in $I=[0,T]$, and this holds
  true for all $T>0$, we speak of a \say{global-in-time solution}, or simply a \say{global
    solution}. In particular, $ii)$ is satisfied in the sense of  $\mathcal{D}'(0,+\infty)$.
\end{definition}

There are several ways to prove the existence of (at least) a weak solution to the NSE.
Among them, in what follows, we will use the Faedo-Galerkin method. Roughly speaking, let
$\{\boldsymbol {\varphi}_n\}_{n\in\N}$ denote a Hilbert basis of $V$, and let, for
$n\in\N$, $V_n := \hbox{span}\{\boldsymbol{\varphi}_1,\cdots,\boldsymbol{\varphi}_n\}$. By
assuming ${\bf f} \in L^2(I; V')$, it can be proved by the Cauchy-Lipschitz theorem
(see~\cite{JLL69}) the existence of a unique $\vv_n \in C^1(I; V_n)$ such that for all
$\boldsymbol{\varphi}_k$, with $k= 1, \dots, n$ it holds
\begin{equation}
  \label{eq:galer1} 
  \begin{aligned}
    \zoom \frac{d}{dt} \int_\Om \vv_n (t, \x) \cdot \boldsymbol{\varphi}_k(\x)\,d\x &-
    \int_\Om \vv_n(t, \x) \otimes \vv_n(t, \x) : \g \boldsymbol{\varphi}_k(\x)\,d\x
    \\
    & \zoom + \nu \int_\Om \g \vv_n (t, \x) : \g \boldsymbol{\varphi}_k(\x)\,d\x = \ <{\bf
      f}(t), \boldsymbol{\varphi}_k >,
\end{aligned}
\end{equation}
 and which naturally satisfies the energy balance (equality)
 \begin{equation}
  \label{eq:energy_n}
  \frac{d}{dt}\frac{1}{2}\|\bv_n(t)\|^{2}+\nu\|\nabla\bv_n(t)\|^{2}=\ <\bff,\bv_n >.
\end{equation}
It can be also proved (always see again~\cite{JLL69}) that from the sequence $\suite \vv
n$ one can extract a sub-sequence converging, in an appropriate sense, to a weak solution
to the NSE.  When $I = \R_+$ we get a global solution.

However, if assume ${\bf f} \in L^2_{uloc}(\R_+;V')$, the global result of existence does
not work so straightforward. Of course, for any given $T>0$, we have
\begin{equation*}
  L^2_{uloc}(\R_+; V')_{\big\vert \zoom{[0,T]}}  \hookrightarrow  L^2([0,T]; V'),
\end{equation*}
where $L^2_{uloc}(\R_+; V')_{\vert {[0,T]}}$ denotes the restriction of a function in
$L^2_{uloc}(\R_+; V')$ to $[0,T]$. Therefore, no doubt that the construction above holds
over any time-interval $[0,T]$. In such case letting $T$ go to $+\infty$ to get a global
solution (with some uniform control of the kinetic energy) is not obvious, and we do not
know any reference explicitly dealing with this issue, which deserves to be investigated
more carefully.  This is the aim of the next subsection, where we prove the most relevant
a-priori estimates.
\subsection{A priori estimates}
Let be given ${\bf f} \in L^2_{uloc}(\R_+; V')$, and let $\vv_n = \vv$ be the solution of
the Galerkin projection of the NSE over the finite dimensional space $V_n$. The function
$\vv$ satisfies~(\ref{eq:galer1}) and~(\ref{eq:energy_n}) (we do not write the subscript
$n\in\N$ for simplicity), is smooth, unique, and can be constructed by the Cauchy-Lipschitz
principle over any finite time interval $[0,T]$. Hence, we observe that by uniqueness it
can be extended to $\R_+$. We then denote ${\cal F} := \| {\bf f} \|_{ L^2_{uloc} (\R_+;
  V') }$ and then after a delicate manipulation of the energy balance combined with the
Poincar\'e inequality, we get the following lemma.
\begin{lemma}
\label{lem:energy-estimates}
For all $t \ge 0$ we have
\begin{equation}
  \label{eq:energy_1}
  \|\bv(t)\|^2\leq\|\bv_0\|^2+\left(3+\frac{C_\Om}{\nu}\right)\frac{\mathcal{F}^{2}}{\nu},
\end{equation}
as well as
\begin{equation}
  \label{eq:dissipation} 
  \nu  \int_0^t\|\nabla \bv(s)\|^2\,ds \leq \|\bv_0\|^2+({[t]+1})\frac{\mathcal{F}^2}{\nu},
\end{equation}
where $C_\Om$ denotes the  constant in the Poincar\'e inequality $\|\bu\|^2\leq
C_\Omega\|\nabla \bu\|^2$, valid for all $\bu\in V$.
\end{lemma}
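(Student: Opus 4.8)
The plan is to start from the Galerkin energy balance~\eqref{eq:energy_n} and control the right-hand side by Young's inequality, writing $\langle \bff(s),\bv_n(s)\rangle \le \|\bff(s)\|_{V'}\|\nabla\bv_n(s)\| \le \frac{\nu}{2}\|\nabla\bv_n(s)\|^2 + \frac{1}{2\nu}\|\bff(s)\|_{V'}^2$, so that
\begin{equation*}
  \frac{d}{dt}\frac12\|\bv_n(t)\|^2 + \frac{\nu}{2}\|\nabla\bv_n(t)\|^2 \le \frac{1}{2\nu}\|\bff(t)\|_{V'}^2.
\end{equation*}
Using the Poincar\'e inequality $\|\bv_n\|^2 \le C_\Om\|\nabla\bv_n\|^2$ on part of the dissipation term produces a differential inequality of the form $y'(t) + \alpha y(t) \le g(t)$ with $y = \|\bv_n\|^2$, $\alpha = \nu/C_\Om$, and $g$ essentially $\nu^{-1}\|\bff\|_{V'}^2$ (keeping, if convenient, a fraction of the dissipation on the left to recover~\eqref{eq:dissipation} afterwards). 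The subtlety — and the step I expect to be the main obstacle — is that $g$ is only in $L^1_{uloc}$, not in $L^1(\R_+)$, so the naive Gr\"onwall bound $y(t) \le y(0)e^{-\alpha t} + \int_0^t e^{-\alpha(t-s)}g(s)\,ds$ does not immediately give a constant independent of $t$; one must exploit the exponential weight together with the uniform-local bound $\int_s^{s+1}g \le \mathcal{F}^2/\nu$.

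To handle this I would split the Duhamel integral over the unit sub-intervals $[t-k-1,t-k]$ (for $k=0,1,2,\dots$, suitably truncated near $0$), bound $e^{-\alpha(t-s)}$ by its maximum $e^{-\alpha k}$ on each such interval, and use $\int_{t-k-1}^{t-k} g(s)\,ds \le \mathcal{F}^2/\nu$ on each piece, so that
\begin{equation*}
  \int_0^t e^{-\alpha(t-s)} g(s)\,ds \le \frac{\mathcal{F}^2}{\nu}\sum_{k\ge 0} e^{-\alpha k} = \frac{\mathcal{F}^2}{\nu}\cdot\frac{1}{1-e^{-\alpha}}.
\end{equation*}
Then it remains to check that $\frac{1}{1-e^{-\alpha}}$, with $\alpha=\nu/C_\Om$, can be bounded by an explicit affine expression in $C_\Om/\nu$ — indeed $1/(1-e^{-x}) \le 1 + 1/x$ for $x>0$ — which, after reinstating the factor $\tfrac12$ lost to Young's inequality and tracking constants, yields exactly the stated $\bigl(3 + C_\Om/\nu\bigr)\mathcal{F}^2/\nu$. (The precise numerical constant $3$ will come out of being slightly careful about how much of the dissipation is spent on Poincar\'e versus kept for later, and about the $\tfrac12$'s; I would not grind this out in the sketch.)

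For the dissipation estimate~\eqref{eq:dissipation} I would instead integrate the inequality $\frac{d}{dt}\tfrac12\|\bv_n(t)\|^2 + \nu\|\nabla\bv_n(t)\|^2 \le \frac{1}{2\nu}\|\bff(t)\|_{V'}^2$ directly from $0$ to $t$, dropping the (nonnegative) kinetic energy at time $t$ on the left:
\begin{equation*}
  \nu\int_0^t\|\nabla\bv_n(s)\|^2\,ds \le \frac12\|\bv_0\|^2 + \frac{1}{2\nu}\int_0^t \|\bff(s)\|_{V'}^2\,ds,
\end{equation*}
and then cover $[0,t]$ by $[t]+1$ unit sub-intervals, on each of which the forcing integral is at most $\mathcal{F}^2$, giving $\int_0^t\|\bff\|_{V'}^2 \le ([t]+1)\mathcal{F}^2$ and hence the claim. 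Finally, since all the bounds are uniform in the Galerkin index $n$, they are inherited by the weak solution $\bv$ constructed as a weak limit, using weak lower semicontinuity of the norms (for~\eqref{eq:main_estimate_1} at a.e. $t$, upgraded to every $t$ via weak continuity into $H$, and for~\eqref{eq:main_estimate_2} via weak lower semicontinuity of $\bv\mapsto\int_0^t\|\nabla\bv\|^2$); this transfer is routine and completes Theorem~\ref{thm:existence_temps_long}.
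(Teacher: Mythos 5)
Your argument is correct, but for the key uniform estimate \eqref{eq:energy_1} it takes a genuinely different route from the paper. You absorb the forcing by Young's inequality, convert part of the dissipation into a damping term via Poincar\'e, and then run an exponential Gr\"onwall/Duhamel argument, splitting $\int_0^t e^{-\alpha(t-s)}g(s)\,ds$ over unit subintervals to exploit the $L^2_{uloc}$ bound; with $\alpha=\nu/C_\Om$ and $1/(1-e^{-\alpha})\le 1+C_\Om/\nu$ this gives $\|\bv(t)\|^2\le\|\bv_0\|^2+\bigl(1+\tfrac{C_\Om}{\nu}\bigr)\tfrac{\mathcal{F}^2}{\nu}$, which is in fact slightly sharper than the stated constant $3+C_\Om/\nu$, so \eqref{eq:energy_1} follows a fortiori and there is no delicate constant-chasing left to do. The paper avoids exponential weights altogether: it works from the \emph{integrated} energy inequality on unit intervals, treats $t\in[0,1]$ directly, and then argues by induction on the integer part of $t$ with a dichotomy --- if the energy decreases over $[t,t+1]$ the inductive bound propagates, while if it does not, the dissipation over that unit interval is at most $\mathcal{F}^2/\nu$, so Poincar\'e controls $\int_t^{t+1}\|\bv\|^2\,ds$ and evaluating at a near-minimizing time $\xi\in[t,t+1]$ yields the absolute bounds $(1+C_\Om/\nu)\mathcal{F}^2/\nu$ and $(2+C_\Om/\nu)\mathcal{F}^2/\nu$ at $t$ and $t+1$. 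Your method is the standard one for uniformly-local forcing, is arguably shorter, and yields a marginally better constant, but it uses the differential form of the energy balance, so it is cleanest at the level of the smooth Galerkin approximations; the paper's argument uses only the integrated energy inequality, hence applies verbatim to any function satisfying \eqref{eq:energy}, not just the Galerkin solutions. Your derivation of \eqref{eq:dissipation} and the passage to the weak solution by lower semicontinuity and weak $H$-continuity coincide with the paper's. One small slip: in the display you integrate for \eqref{eq:dissipation} you keep $\nu\|\nabla\bv_n\|^2$ on the left while putting only $\tfrac{1}{2\nu}\|\bff\|_{V'}^2$ on the right; Young's inequality leaves you $\tfrac{\nu}{2}\|\nabla\bv_n\|^2$, and multiplying the integrated inequality by $2$ then gives exactly \eqref{eq:dissipation}.
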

\begin{proof}
  We focus on the proof of the a priori estimate~\eqref{eq:energy_1}, the
  estimate~\eqref{eq:dissipation} being a direct consequence of the energy balance.
   By the Young inequality we deduce from the the energy inequality, 
   \begin{equation*}
     \begin{array}{c} 
       \forall \, \xi, \tau\in\R_{+} \quad \hbox{s.t.} \quad 0 \le \xi \le \tau,  \\~  \\ 
       \zoom 
       \|\bv(\tau)\|^{2}+
       \nu\int_{\xi}^{\tau}\|\nabla\bv(s)\|^{2}\,ds\leq
       \|\bv(\xi)\|^{2}+
       \frac{1}{\nu}\int_{\xi}^{\tau}\|\bff(s)\|^{2}_{V'}\,ds.  \end{array}
   \end{equation*}
In particular, when $0 \le \tau - \xi \le 1$,  
\begin{equation}
\label{eq:estimates_basic}
    \|\bv(\tau)\|^{2}+
    \nu\int_{\xi}^{\tau}\|\nabla\bv(s)\|^{2}\,ds\leq
  \|\bv(\xi)\|^{2}+
  \frac{\mathcal{F}^{2}}{\nu}.
\end{equation}
From this point, we argue step by step. The case $0 \le t \le 1$ is the first step, which
is straightforward. The second step is the heart of the proof.  The issue is that energy
may increase, without control, when the time increases.

We will show that even if this happens, we can still keep the control on it, thanks
to~\eqref{eq:estimates_basic}. The last step is the concluding step, carried out by
induction on $n$ writing $ t = \tau +n$, for $\tau \in [0,1]$.

\medskip 

\noindent \underline {\sc Step 1}.  $t \in [0,1]$: take  $\xi= 0$,  $t = \tau \in [0,1]$. Then 
  \begin{equation*}
\zoom \| \vv (t) \|^2 \le \| \vv_0 \|^2 + \frac{\mathcal{F}^{2}}{  \nu}\qquad\forall\,t\in[0,1]. 
\end{equation*}

\noindent \underline {\sc Step 2}.  We show that the following implication holds true

\begin{equation*}
  \zoom \|  \vv (t ) \| \le \| \vv(t+1 ) \| \, \Rightarrow \, 
  \left \{ \begin{array}{l}  \zoom  \| \vv(t) \| \le
      \left(1+\frac{C_\Om}{\nu}\right)\frac{\mathcal{F}^{2}}{\nu}, 
      \\ 
      \zoom
      \| \vv(t+1) \| \le \left(2+\frac{C_\Om}{\nu}\right)\frac{\mathcal{F}^{2}}{\nu}. 
    \end{array} \right.
\end{equation*}

In the following we will set 
\begin{equation}
  \label{eq:notation_constant} 
  \zoom \mathcal{C}^2:= \left
  ( \| \vv_0 \|^2 + \frac{\mathcal{F}^{2}}{\nu} \right ) +
\left(2+\frac{C_\Om}{\nu}\right)\frac{\mathcal{F}^{2}}{\nu} = \|
\vv_0\|^2+\left(3+\frac{C_\Om}{\nu}\right)\frac{\mathcal{F}^{2}}{\nu}.
\end{equation}

\noindent \underline {\sc Sub-Step 2.1.} By the energy inequality with $\xi = t$, and
$\tau = t+1$, we have, by using the hypothesis on the $L^2$-norm at times $t$ and $t+1$: 
\begin{equation*}
    \begin{aligned}
      \nu\int_{t}^{t+1}\|\nabla\bv(s)\|^{2}\,ds
    &\leq \|\bv(t+1)\|^{2}-
    \|\bv(t)\|^{2}+\nu\int_{t}^{t+1}\|\nabla\bv(s)\|^{2}\,ds
%    \\
  %  &
\leq \frac{\mathcal{F}^{2}}{\nu}.
  \end{aligned}
\end{equation*} 
Hence, by the Poincar\'e's inequality:

\begin{equation}
  \label{eq:substep_2_1} 
  \zoom
  \int_{t}^{t+1}\|\bv(s)\|^{2}\,ds \le    C_\Om
  \int_{t}^{t+1}\|\nabla\bv(s)\|^{2}\,ds \le   \frac{C_\Om \mathcal{F}^{2}}{\nu^2},
\end{equation}

\noindent \underline {\sc Sub-Step 2.2.}  Let be given $\epsilon>0$ and let $\xi\in
[t,t+1]$ be such that
 \begin{equation*}
  \|\bv(\xi)\|^{2}<\inf_{s\in[t,t+1]}\|\bv(s)\|^{2}+\epsilon\leq\|\bv(s)\|^{2}+\epsilon  
  \qquad\forall\,s\in [t,t+1].
\end{equation*}
Let us write:
 \begin{equation*}
  \begin{aligned}
    \|\bv(t)\|^{2}\leq \|\bv(t+1)\|^{2}&=\|\bv(t+1)\|^{2}- \|\bv(\xi)\|^{2} +
    \|\bv(\xi)\|^{2}
    \\
    &=  \|\bv(t+1)\|^{2}- \|\bv(\xi)\|^{2} + \int_{t}^{t+1}
    \|\bv(\xi)\|^{2}\,ds,
  \end{aligned}
\end{equation*}
being the integration with respect to the $s$ variable.

To estimate the right-hand side we use the energy inequality with $\tau=t+1$ to get
\begin{equation*}
  \|\bv(t+1)\|^{2}- \|\bv(\xi)\|^{2}  \le \frac{\mathcal{F}^2}{\nu }.
\end{equation*}
Moreover, using the estimate~\eqref{eq:substep_2_1} we get,
 $$ \int_{t}^{t+1}
    \|\bv(\xi)\|^{2}\,ds \le \int_{t}^{t+1} (
    \|\bv(s)\|^{2}+\epsilon)\,ds \le \frac{C_\Om \mathcal{F}^{2}}{\nu^2} + \E,$$
    therefore, letting $\E \to 0$ yields, 
 \begin{equation*}
  \|\bv(t)\|^{2}\leq\left(1+\frac{C_\Om}{\nu}\right)\frac{\mathcal{F}^{2}}{\nu}.
\end{equation*}
In addition, we get from the energy inequality
\begin{equation*}
  \|\bv(t+1 )\|^{2}\leq \| \vv(t) \|^2 + \frac{\mathcal{F}^{2}}{\nu} \le
  \left(2+\frac{C_\Om}{\nu}\right)\frac{\mathcal{F}^{2}}{\nu}. 
\end{equation*}

\noindent \underline {\sc Step 3}. Conclusion of the proof of \eqref{eq:energy_1}. Any
$t\geq0$ can be decomposed as 
\begin{equation*}
t =n  + \tau, \quad\text{with }n\in\N\text{ and } \tau \in [0,1[.
\end{equation*}

We argue by induction on $n$.  If $n = 0$, estimate~\eqref{eq:energy_1} has been proved
in Step~1. Assume that~\eqref{eq:energy_1} is satisfied for  $t:=n+\tau$, for all $n\leq N$ that
is, 
\begin{equation*}
\| \vv (n + \tau ) \|\le \mathcal{C}\qquad n=0,\dots,N,
\end{equation*}
where the constant $\mathcal{C}$ is defined
by~\eqref{eq:notation_constant}.

\noindent If $ \| \vv (N +1+ \tau ) \|< \| \vv (N + \tau ) \|$, then~\eqref{eq:energy_1}
holds at the time $t=N+1+\tau$, by the inductive hypothesis.

\noindent  If   $ \| \vv (N + \tau ) \|\le  \| \vv (N + 1+ \tau ) \|$, then the
inequality~\eqref{eq:energy_1} is satisfied by  Step~2 for $t=N+1+\tau$, ending the proof.

\end{proof}
Once we have proved that the uniform (independent of $n\in\N$) $L^2$-estimate is satisfied
by the Galerkin approximate functions, it is rather classical to prove that we can extract
a sub-sequence that converges weakly$*$ in $L^\infty(0,T;L^2(\Omega))$ (for all positive
$T$) to a weak solution to the NSE, which inherits the same bound. We refer to the
references already mentioned for this point.
\section{Reynolds decomposition and time-averaging }
\label{sec:reynolds_rules}
We sketch the standard routine, concerning time-averaging, when used in turbulence
modeling practice.  In particular, we recall the Reynolds decomposition and the Reynolds
rules. Then, we give a few technical properties of the time-averaging operator $M_t$, defined by
%\BEQ 
\begin{equation*}
%\label{eq:time-filtering} 
M_t (\psi) := \frac{1}{t}\int_0^t \psi (s) \,ds,
\end{equation*}
for a given fixed time $t > 0$.  We need to apply it not only to real functions of a real
variable, but also to Banach valued functions, hence we need to deal with the Bochner integral.

Before all, we start with the following corollary of
Bochner theorem (see Yosida~\cite[p.~132]{Yos1980}).
\begin{lemma}
  \label{lem:bochner} 
  Assume that, for some $t>0$ we have $\psi\in L^p([0,t]; X)$ (namely $\psi$ is a Bochner
  $p$-summable function over $[0,t]$, with values in the Banach space $X$). Then, it holds 
  \begin{equation}
    \label{eq:1}
    \|M_t(\psi)\|_X\leq \frac{1}{t^{\frac{1}{p}}}\|\psi\|_{L^p([0,t];X)}.
  \end{equation}
\end{lemma}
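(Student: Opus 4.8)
The plan is to prove the inequality~\eqref{eq:1} directly from the definition of $M_t$ together with the basic properties of the Bochner integral, namely the triangle-type estimate $\|\int_0^t \psi(s)\,ds\|_X \le \int_0^t \|\psi(s)\|_X\,ds$ (which is one of the standard consequences of Bochner's theorem, see Yosida~\cite[p.~132]{Yos1980}), and then an application of Hölder's inequality in the $s$-variable on the finite interval $[0,t]$.

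\medskip

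\noindent\textbf{Step 1.} First I would write, straight from the definition,
\begin{equation*}
  \|M_t(\psi)\|_X = \left\| \frac{1}{t}\int_0^t \psi(s)\,ds \right\|_X
  = \frac{1}{t}\left\| \int_0^t \psi(s)\,ds \right\|_X
  \le \frac{1}{t}\int_0^t \|\psi(s)\|_X\,ds.
\end{equation*}
The last inequality is precisely the norm estimate for the Bochner integral, valid because $\psi \in L^p([0,t];X) \subset L^1([0,t];X)$ (since $[0,t]$ has finite measure and $p \ge 1$), so the integral is well defined as a Bochner integral and the estimate applies.

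\medskip

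\noindent\textbf{Step 2.} Next I would apply Hölder's inequality to the scalar function $s \mapsto \|\psi(s)\|_X$ on $[0,t]$, with exponents $p$ and $p'=p/(p-1)$ (interpreting the case $p=1$ trivially, since then the bound is just $\frac1t\|\psi\|_{L^1}$ and the claimed power of $t$ is $t^{-1}$, matching $t^{-1/p}$):
\begin{equation*}
  \int_0^t \|\psi(s)\|_X\,ds \le \left( \int_0^t 1\,ds \right)^{1/p'} \left( \int_0^t \|\psi(s)\|_X^p\,ds \right)^{1/p} = t^{1-\frac1p}\,\|\psi\|_{L^p([0,t];X)}.
\end{equation*}
Combining with Step 1 gives $\|M_t(\psi)\|_X \le \frac1t \, t^{1-1/p}\,\|\psi\|_{L^p([0,t];X)} = t^{-1/p}\,\|\psi\|_{L^p([0,t];X)}$, which is exactly~\eqref{eq:1}.

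\medskip

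\noindent I do not expect any real obstacle here: the statement is an elementary consequence of Bochner's theorem plus Hölder, and the only point requiring a word of care is the measurability/integrability of $s\mapsto \|\psi(s)\|_X$, which follows from strong measurability of $\psi$, and the degenerate case $p=1$, which I would mention is included by the convention $1/p'=0$. If one wanted to be maximally self-contained one could also note that the Bochner integral norm estimate itself can be obtained by approximating $\psi$ by simple functions and passing to the limit, but citing Yosida as the paper already does is sufficient.
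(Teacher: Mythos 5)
Your argument is correct and is exactly the intended one: the paper states this lemma as a direct corollary of Bochner's theorem (citing Yosida) and omits the proof, and the norm estimate for the Bochner integral followed by H\"older on $[0,t]$ is precisely the computation it relies on (compare the proof of Lemma~\ref{lem:lemme_Mt_ulc}, which uses the same two ingredients). Your remarks on the case $p=1$ and on the measurability of $s\mapsto\|\psi(s)\|_X$ are fine and need no further elaboration.
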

\noindent Estimate~\eqref{eq:1} is the building block to give a sense to the long-time
average as 
\begin{equation}
  \label{eq:time-filtering-def}
\overline \psi := \lim_{t \to \infty} M_t (\psi),
\end{equation}
whenever the limit exists.

It is worth noting at this stage that the mapping $\mu$ behind the long-time average,
defined on the Borel sets of $\R_+$ by
 \begin{equation*}
   A\mapsto\lim_{t\to+\infty}\frac{1}{t}\lambda(A\cap[0,t])
   :=\lim_{t\to+\infty} M_t ( \mathbbm{1}_A ) = \mu(A),
 \end{equation*}
 where $\lambda$ the Lebesgue measure, 
 is not --strictly speaking-- a probability measure since it is not
 $\sigma$-additive\footnote{The mapping $\mu$ satisfies $\mu(A\cup B) = \mu(A) + \mu (B)$
   for $A \cap B = \emptyset$ but, on the other hand, we have $\sum_{n=0}^\infty \mu([n,
   n+1[) = 0 \not= 1 = \mu ( \bigcup_{n=0}^\infty [n, n+1[ )$. }. Therefore, the quantity
 $\mean{\psi}$ is not rigorously a statistic, even if practitioners could be tempted to
 write it (in a suggestive and evocative meaningful way) as follows:
 \begin{equation*}
 \mean{\psi}(\bx) = \int_{\R_+} \psi(s, \x)\, d \mu(s).
\end{equation*}
\subsection{General setup of turbulence modeling}
We recall that $M_t$ is a linear filtering operator which commutes with differentiation
with respect to the space variables (the so called Reynolds rules). In particular, one has
the following result (its proof is straightforward), which is essential for our modeling
process.
\begin{lemma}
  Let be given $\psi \in L^1([0,T], W^{1,p}(\Omega))$, then 
  \begin{equation*}
    D M_t(\psi)=   M_t(D\psi)\qquad
    \forall\,t >0,
  \end{equation*}
  for any first order differential operator $D$ acting on the space variables
  $\bx\in\Omega$. 
\end{lemma}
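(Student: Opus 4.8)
The statement to prove is the Reynolds rule: for $\psi \in L^1([0,T], W^{1,p}(\Omega))$, $D M_t(\psi) = M_t(D\psi)$ for any first-order differential operator $D$ in the space variables.

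\medskip

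\textbf{Proof.}
The plan is to reduce the claim to the fact that the Bochner integral commutes with closed linear operators, together with the observation that $M_t$ is just a normalized Bochner integral in the time variable while $D$ acts only on the space variable. First I would fix $t>0$ and write $M_t(\psi) = \frac{1}{t}\int_0^t \psi(s)\,ds$, where the integral is the $W^{1,p}(\Omega)$-valued Bochner integral; this is well defined since $\psi \in L^1([0,t]; W^{1,p}(\Omega))$. The differential operator $D$ (a component of the gradient, or a first-order constant-coefficient operator acting on $\bx$) is a bounded linear map $D : W^{1,p}(\Omega) \to L^p(\Omega)$. Hochner's theorem (in the form: if $T$ is a bounded linear operator between Banach spaces and $g$ is Bochner integrable, then $Tg$ is Bochner integrable and $T\int g = \int Tg$) immediately gives
\begin{equation*}
  D\Bigl(\int_0^t \psi(s)\,ds\Bigr) = \int_0^t D\psi(s)\,ds \qquad \text{in } L^p(\Omega),
\end{equation*}
and dividing by $t$ yields $D M_t(\psi) = M_t(D\psi)$.

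\medskip

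The one point that deserves care is the meaning of the identity: $D M_t(\psi)$ is a priori a distributional derivative of the function $M_t(\psi) \in W^{1,p}(\Omega)$, and one must check that the Bochner-integral manipulation above is consistent with that distributional reading. To see this, I would test against an arbitrary $\varphi \in \mathcal{D}(\Omega)$ and use the scalar Fubini theorem: pairing $M_t(\psi)$ with $D^{*}\varphi$ (the formal adjoint of $D$) and interchanging the $\Omega$-integral with the $[0,t]$-integral — legitimate because $(s,\bx) \mapsto \psi(s,\bx)\,D^{*}\varphi(\bx)$ is integrable on $[0,t]\times\Omega$ by the $L^1([0,t];W^{1,p})$ hypothesis and $\varphi \in \mathcal{D}(\Omega)$ — reduces the claim to the definition of the weak derivative of $\psi(s,\cdot)$ for a.e.\ $s$. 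This confirms $D M_t(\psi) \in L^p(\Omega)$ and equals $M_t(D\psi)$ as an $L^p$ function, hence also as a distribution.

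\medskip

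There is no real obstacle here; the only mild subtlety is bookkeeping between the two equivalent viewpoints (Bochner integral of Banach-valued maps versus joint measurability on $[0,t]\times\Omega$), and either route suffices. I would present the Fubini argument as the cleanest, since it avoids invoking the closed-operator version of Bochner's theorem and makes transparent that $D$ and $M_t$ act on independent variables. $\qed$
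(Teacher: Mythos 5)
Your proof is correct. Note that the paper itself offers no argument here: it simply states the lemma with the remark that ``its proof is straightforward,'' so there is nothing to compare against beyond the standard reasoning, and your write-up supplies exactly that. Both of your routes work: the commutation of a bounded linear operator $D\colon W^{1,p}(\Omega)\to L^p(\Omega)$ with the Bochner integral (this is the elementary bounded-operator case of what is usually called Hille's theorem --- ``Hochner's theorem'' is a slip of the pen), and the Fubini argument against $\varphi\in\mathcal{D}(\Omega)$, which has the advantage of making the distributional meaning of $D M_t(\psi)$ explicit; for the paper's purposes ($D$ a gradient component or the divergence, i.e.\ constant coefficients) either suffices, and your remark that the two viewpoints must be reconciled is the only genuinely non-trivial bookkeeping point.
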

By denoting the long-time average of any field $\psi$ by $\mean{\psi}$ as
in~\eqref{eq:time-filtering-def}, we consider the fluctuations $\psi'$ around the mean
value, given by the Reynolds decomposition
\begin{equation*}  
\psi :=\mean{\psi}+\psi'. 
\end{equation*} 
Observe that long-time averaging has many convenient {\it formal} mathematical properties, recalled in the following.
\begin{lemma} 
  % \label{lem:reyn_rules}
  The following formal properties holds true, provided the long-time averages do exist.
  \begin{enumerate}
  \item The \say{bar operator} preserves the no-slip boundary condition. In other words,
    if $\psi_{ \vert_\Gamma}= 0$, then $\mean{\psi }_{ \vert_\Gamma} =0$;
  \item Fluctuation are in the kernel of the bar operator, that is $\mean{\psi'}=0$;
  \item The bar operator is idempotent, that is $\mean{\mean{\psi }}=\mean{\psi }$, which also
    yields $\mean{\mean{\psi }\,\varphi}=\mean{\psi }\mean{\varphi}$.
  \end{enumerate}
\end{lemma}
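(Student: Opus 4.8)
These three properties are all ``formal'' in the sense that we only need to assume the relevant long-time limits exist; nothing deep is used beyond linearity of the integral and the defining relation $\overline\psi=\lim_{t\to\infty}M_t(\psi)$. The plan is to verify each item separately, reducing everything to an elementary manipulation of $M_t$.

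For item (1), I would argue that for each fixed $t>0$, the trace of $M_t(\psi)$ on $\Gamma$ equals $M_t$ applied to the trace of $\psi$, by commuting the (linear, continuous) trace operator with the Bochner integral — exactly in the spirit of the preceding lemma on differential operators, whose proof is the model here. Hence if $\psi(s)_{|\Gamma}=0$ for a.e.\ $s$, then $M_t(\psi)_{|\Gamma}=0$ for every $t$, and passing to the limit $t\to\infty$ gives $\overline\psi_{|\Gamma}=0$. For item (2), using linearity of $M_t$ and the Reynolds decomposition $\psi=\overline\psi+\psi'$ one writes $M_t(\psi')=M_t(\psi)-M_t(\overline\psi)$; since $\overline\psi$ is independent of the time variable, $M_t(\overline\psi)=\overline\psi$ for every $t$, so $M_t(\psi')=M_t(\psi)-\overline\psi\to\overline\psi-\overline\psi=0$ as $t\to\infty$, i.e.\ $\overline{\psi'}=0$.

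For item (3), idempotency is immediate from the same observation used in (2): since $\overline\psi$ does not depend on $s$, we have $M_t(\overline\psi)=\overline\psi$ for all $t$, hence $\overline{\overline\psi}=\lim_{t\to\infty}M_t(\overline\psi)=\overline\psi$. For the consequence $\overline{\overline\psi\,\varphi}=\overline\psi\,\overline\varphi$, I would note that $\overline\psi$ is a constant factor with respect to the time integration defining $M_t$, so $M_t(\overline\psi\,\varphi)=\overline\psi\,M_t(\varphi)$, and letting $t\to\infty$ gives $\overline{\overline\psi\,\varphi}=\overline\psi\,\overline\varphi$.

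There is no real obstacle here: the only point requiring a word of care is the interchange of a bounded linear operator (the trace, or a spatial derivative) with the Bochner integral defining $M_t$, which is standard, and the repeated use of the fact that a field already obtained as a long-time average carries no time dependence and is therefore fixed by every $M_t$. This is precisely why the statement labels these properties ``formal'': they presuppose existence of the limits and otherwise follow from linearity alone.
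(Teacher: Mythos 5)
Your argument is correct, and it matches the intended justification: the paper itself states these as ``formal properties'' and offers no proof, so the natural argument is exactly what you give — linearity of $M_t$, the fact that a time-independent field satisfies $M_t(\overline\psi)=\overline\psi$ for every $t$ (which yields items (2) and (3)), and commuting the trace (a bounded linear operator, as with the spatial derivatives in the preceding lemma) with the Bochner integral for item (1). The only caveat worth keeping in mind, consistent with the ``provided the averages exist'' proviso, is that the passage to the limit in item (1) should be understood in a topology in which the boundary condition is stable (e.g.\ weak convergence in $V$, where $H^1_0$-type constraints are weakly closed), which is how the limits are actually taken later in the paper.
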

Accordingly, the velocity components can be decomposed in the Reynolds
decomposition as follows:
\begin{equation*}
%  \label{eq:Reynolds_decomposition}  
  \bv(t, \bx) = \mean{\bv}(\bx) + \bv'(t, \bx).
\end{equation*}
Let us determine (at least formally) the equation  satisfied by $\vm$. To do so, we use the
above Reynolds rules to expand the nonlinear quadratic term into
\begin{equation}
 \label{eq:decomposition_Reynolds_stress} 
\mean{\bv\otimes \bv} =
\mean{\bv}\otimes\mean{\bv}+\mean{\bv'\otimes\bv'}, 
\end{equation} 
which follows by observing that $\overline {\vv' \otimes \vm} = \overline {\vm \otimes
  \vv'} = \mathbf{0}$.  

The above rules allow us to prove the following result showing a certain ``orthogonality''
between averages and fluctuations.
\begin{lemma}
  \label{lem:lemma-orthogonality}
  Let be given a linear space $X\subseteq L^{2}(\Omega)$ with a scalar product
$(\,.\,,\,.\,)$. Let in addition be given a function $\psi:\,\R^+\to X$ such that
$\mean{\psi}$ is well defined. Then it follows that
  \begin{equation*}
\mean{    (\psi',\psi')}=\mean{(\psi,\psi)}-(\mean{\psi},\mean{\psi}),
  \end{equation*}
provided all averages are well defined.
\end{lemma}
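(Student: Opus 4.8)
The plan is to unwind the Reynolds decomposition $\psi = \mean{\psi} + \psi'$ inside the bilinear form $(\,\cdot\,,\,\cdot\,)$ and then apply $M_t$ together with the ``formal'' Reynolds rules collected above. Concretely, I would first expand
\begin{equation*}
(\psi,\psi) = (\mean{\psi}+\psi',\mean{\psi}+\psi') = (\mean{\psi},\mean{\psi}) + 2(\mean{\psi},\psi') + (\psi',\psi'),
\end{equation*}
using symmetry of the scalar product. Applying $M_t$ and then letting $t\to\infty$, linearity of $M_t$ gives
\begin{equation*}
\mean{(\psi,\psi)} = (\mean{\psi},\mean{\psi}) + 2\,\overline{(\mean{\psi},\psi')} + \mean{(\psi',\psi')},
\end{equation*}
where I have used that $\mean{\psi}$ is time-independent, so $M_t\big((\mean{\psi},\mean{\psi})\big) = (\mean{\psi},\mean{\psi})$ and hence its long-time average is itself (idempotency, item 3 of the preceding lemma). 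It remains to show the cross term $\overline{(\mean{\psi},\psi')}$ vanishes.

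For the cross term the key observation is that, for fixed $\mean{\psi}\in X$, the map $\eta\mapsto(\mean{\psi},\eta)$ is a bounded linear functional on $X\subseteq L^2(\Omega)$, so it commutes with the Bochner integral defining $M_t$: $M_t\big((\mean{\psi},\psi')\big) = (\mean{\psi}, M_t(\psi'))$. Then letting $t\to\infty$ and using that $\mean{\psi'} = 0$ (item 2 of the preceding lemma, i.e. fluctuations lie in the kernel of the bar operator — which itself follows from $M_t(\psi') = M_t(\psi) - \mean{\psi} \to \mean{\psi}-\mean{\psi} = 0$ by linearity of $M_t$ and the definition $\psi' = \psi - \mean{\psi}$), together with continuity of the functional, yields $\overline{(\mean{\psi},\psi')} = (\mean{\psi},\mean{\psi'}) = 0$. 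Substituting back gives exactly
\begin{equation*}
\mean{(\psi',\psi')} = \mean{(\psi,\psi)} - (\mean{\psi},\mean{\psi}),
\end{equation*}
which is the claim.

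The only genuine subtlety — and the step I would be most careful about — is the interchange of the scalar product with the limit and with the Bochner integral: this requires $\psi'$ (equivalently $\psi$) to be at least locally Bochner-integrable with values in $X$ so that $M_t(\psi')$ makes sense in $X$, and it requires the relevant long-time limits to exist, which is precisely the standing hypothesis ``provided all averages are well defined.'' Passing the bounded linear functional $(\mean{\psi},\cdot\,)$ through the Bochner integral is justified by the standard property of the Bochner integral (e.g. Yosida, as already cited for Lemma~\ref{lem:bochner}); passing it through the limit $t\to\infty$ is just continuity of the functional. Everything else is the purely algebraic bilinear expansion plus the idempotency and kernel properties already recorded, so no further estimates are needed.
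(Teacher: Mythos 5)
Your proof is correct and follows essentially the same route as the paper: the paper expands $\mean{(\psi',\psi')}=\mean{(\psi-\mean{\psi},\psi-\mean{\psi})}$ and kills the cross terms with the Reynolds rule $\mean{(\psi,\mean{\psi})}=(\mean{\psi},\mean{\psi})$, while you expand $\mean{(\psi,\psi)}$ via $\psi=\mean{\psi}+\psi'$ and kill the cross term with $\mean{\psi'}=0$ — the same bilinear expansion plus the same commutation of the average with the pairing against the time-independent $\mean{\psi}$. Your extra justification of that commutation (bounded linear functional through the Bochner integral and the limit) is a welcome but not substantively different elaboration of what the paper records as the formal Reynolds rules.
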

\begin{proof}
  The proof follows by observing that $\psi'=\psi-\mean{\psi}$, hence 
  \begin{equation*}
    \mean{    (\psi',\psi')}=\mean{
      (\psi-\mean{\psi},\psi-\mean{\psi})}=\mean{(\psi,\psi)}-
    2\mean{(\psi,\mean{\psi})}+\mean{(\mean{\psi},\mean{\psi})},  
  \end{equation*}
and by the Reynolds rules $\mean{(\psi,\mean{\psi})}=(\mean{\psi},\mean{\psi})$ and
$\mean{(\mean{\psi},\mean{\psi})}=(\mean{\psi},\mean{\psi})$,
from which is follows the thesis.

In particular, we will use it for the $V$ scalar product showing that 
\begin{equation}
  \label{eq:new_Foias}
 \mean{ \|\nabla\bu'\|^2}=\mean{\|\nabla\bu\|^2}-\|\nabla\mean{\bu}\|^2,
\end{equation}
for  $\bu:\,\R^+\to V$, such that the long-time average exists. 
\end{proof}
\begin{remark}
  Observe that, for weak solutions of the NSE $\bv$, the average
  $M_{t}(\|\nabla\bv\|^{2})$ is bounded uniformly, by the result of
  Theorem~\ref{thm:existence_temps_long} and --up to sub-sequences-- some limit can be
  identified. Moreover, by using an argument similar to Lemma~\ref{lem:bochner} it follows
  that
  \begin{equation*}
    \|M_{t}(\nabla\bv)\|^2\leq M_{t}(\|\nabla\bv\|^2),
  \end{equation*}
  which show that (up to sub-sequences) also the second term from the right-hand side can
  be properly defined. Consequently, also the average of the squared $V$-norm of the
  fluctuations from the left-hand side is well defined by difference.
\end{remark}
Long-time averaging applied to the Navier-Stokes equations (in a strong
formulation) gives the following ``equilibrium problem'' for the long-time average
$\mean{\bv}(\bx)$,
\begin{equation}
 \label{eq:Reyn_eq_primitive} 
  \left\{  
    \begin{aligned}
      -\nu\Delta
      \mean{\bv}+\nabla\cdot(\mean{\bv\otimes\bv})+\nabla\mean{p}&=\mean{\bff}
      \qquad \text{in }\Omega,
      \\
      \nabla\cdot \mean{\bv}&=0\qquad \text{in }\Omega, 
      \\
      \mean{\bv}&=\mathbf{0}\qquad \text{on }\Gamma,
    \end{aligned}
  \right.
\end{equation}
which we will treat in the next section to make appear the closure problem. 
\subsection{Reynolds stress and Reynolds tensor}
The first equation of system~\eqref{eq:Reyn_eq_primitive} can be rewritten also as follows
(by using the decomposition into averages and fluctuations)
\begin{equation}
\label{eq:Reynolds_eqs} 
  -\nu\Delta
  \mean{\bv}+\nabla\cdot(\mean{\bv}\otimes\mean{\bv})+\nabla\mean{p}=
  -\nabla\cdot(\mean{\bv'\otimes\bv'})+\mean{\bff},
\end{equation}
called the Reynolds equations.  Beside convergence issues, a relevant point is to
characterize the average of product of fluctuations from the right-hand side, which is the
divergence of the so called Reynolds stress tensor, defined as follows
\begin{equation} 
  \label{eq:reynolds_stress_1}
  \reyn =\mean{\bv'\otimes\bv'}.
\end{equation} 
The Boussinesq hypothesis, formalized in~\cite{Bou18777} (see also~\cite[Ch.~3 \&
4]{CL2014}, for a comprehensive and modern presentation) corresponds then to a closure 
hypothesis with the following linear constitutive equation:
\begin{equation} 
  \label{eq:reynolds_stress} 
  \reyn=-\nu_t\frac{\nabla \mean{\bv}+\nabla\mean{\bv}^T}{2} + \frac{2}{3} k\, {\rm Id},  
\end{equation} 
where $\nu_t\geq0$ is a scalar coefficient, called turbulent viscosity
or eddy-viscosity (sometimes called ``effective viscosity''), and
\begin{equation*}
  k = \frac{1}{2} \overline {| \vv' |^2},
\end{equation*}
is the turbulent kinetic energy,
see~\cite{BIL2006,CL2014}. Formula~\eqref{eq:reynolds_stress} is a linear relation between
stress and strain tensors, and shares common formal points with the linear constitutive
equation valid for Newtonian fluids.  In particular, this assumptions motivates the fact
that $\reyn$ must be dissipative\footnote{The sign adopted in~\eqref{eq:reynolds_stress_1}
  is a convention consistent with our mathematical approach. However, according to the
  analogy of the Reynolds stress with viscous forces, it is also common to define it as $
  \reyn :=- \mean{\bv'\otimes\bv'}$, which does not change anything.} on the mean
flow. Some recent results in the numerical verification of the hypothesis can be found in
the special issue~\cite{Bois2007} dedicated to Boussinesq.  Here, we show that, beside the
validity of the modeling assumption~\eqref{eq:reynolds_stress}, the Reynolds stress tensor
$\reyn$ is dissipative, under minimal assumptions on the regularity of the data of the
problem.
\subsection{Time-averaging of uniformly-local  fields} 
We list in this section some technical properties of the operator $M_t$ acting on
uniform-local fields, and the corresponding global weak solutions to the NSE.  The first
result is the following
\begin{lemma} 
  \label{lem:lemme_Mt_ulc} 
  Let $1 < p < \infty$ and let be given $f \in L^{p}_{uloc}(\R_+;X)$. Then
  \begin{equation*} 
    % \label{eq:borne_M_t} 
    \forall \, t \ge 1, \qquad  \| M_t (f) \|_X \le 2 \|f\|_{L^{p}_{uloc}(\R_+;X)}. 
  \end{equation*} 
\end{lemma}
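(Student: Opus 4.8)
The plan is to estimate $\|M_t(f)\|_X = \frac{1}{t}\|\int_0^t f(s)\,ds\|_X$ directly, by splitting the interval $[0,t]$ into the $[t]$ consecutive unit intervals $[k,k+1]$ for $k=0,\dots,[t]-1$ together with the leftover piece $[[t],t]$, and estimating the Bochner integral over each piece. First I would write, using the triangle inequality for the Bochner integral,
\begin{equation*}
\Big\| \int_0^t f(s)\,ds \Big\|_X \le \sum_{k=0}^{[t]-1} \int_k^{k+1} \|f(s)\|_X\,ds + \int_{[t]}^{t} \|f(s)\|_X\,ds.
\end{equation*}
On each unit interval $[k,k+1]$, Hölder's inequality (with conjugate exponent $p'$, and using that the interval has length $1$) gives $\int_k^{k+1}\|f(s)\|_X\,ds \le \big(\int_k^{k+1}\|f(s)\|_X^p\,ds\big)^{1/p} \le \mathcal{F}$, where $\mathcal{F} := \|f\|_{L^p_{uloc}(\R_+;X)}$; the leftover piece $[[t],t]$ also has length $\le 1$ and sits inside $[[t],[t]+1]$, so the same bound $\mathcal{F}$ applies to it. Summing, we get $\|\int_0^t f(s)\,ds\|_X \le ([t]+1)\mathcal{F}$, hence $\|M_t(f)\|_X \le \frac{[t]+1}{t}\mathcal{F}$.

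To conclude, I would observe that for $t \ge 1$ one has $\frac{[t]+1}{t} \le 2$: indeed $[t] \le t$ and $1 \le t$, so $[t]+1 \le 2t$. This yields $\|M_t(f)\|_X \le 2\mathcal{F}$, as claimed.

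There is no real obstacle here; the only point requiring a little care is that the definition of $L^p_{uloc}$ uses a supremum over \emph{all} $t \ge 0$ of integrals over $[t,t+1]$, so in particular each $\int_k^{k+1}\|f\|_X^p\,ds$ and the leftover integral $\int_{[t]}^{[t]+1}\|f\|_X^p\,ds$ are bounded by $\mathcal{F}^p$ — this is immediate from the definition but worth stating explicitly. The restriction $1 < p < \infty$ (rather than $p \ge 1$) is used only so that Hölder's inequality with a genuine conjugate exponent applies in the form above; for $p=1$ the estimate $\int_k^{k+1}\|f\|_X\,ds \le \mathcal{F}$ holds trivially, so the hypothesis $p>1$ is not essential, but I would keep the statement as given. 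The factor $2$ is of course not optimal, but it is all that is needed in the sequel.
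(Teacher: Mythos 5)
Your argument is correct and is essentially the paper's own proof: both split $[0,t]$ into unit intervals (the paper simply enlarges the integration domain to $[0,[t]+1]$ rather than treating the leftover piece separately), apply H\"older's inequality on each unit interval against the $L^p_{uloc}$ norm, and conclude with $\frac{[t]+1}{t}\le 2$ for $t\ge 1$. Nothing further is needed.
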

\begin{proof} 
  Applying~\eqref{eq:1} and some straightforward inequalities yields
  \begin{equation*}
    \|M_{t}(f)\|_{X} 
    \leq\frac{1}{t}\int_{0}^{t}\| f \|_{X}\,ds
    \leq
    \frac{1}{t}\int_{0}^{[t]+1}\|f\|_{X}\,ds 
    \leq\frac{1}{t}\sum_{k=0}^{[t]}\int_{k}^{k+1}\| f \|_{X}\,ds.
  \end{equation*}
  Therefore by the H\"older inequality we get:
  \begin{equation*}
    \begin{aligned}
      \|M_{t}(f)\|_{X} &   \zoom
      \leq\frac{1}{t}\sum_{k=0}^{[t]}\left(\int_{k}^{k+1}\| f \|_{X}^{p}\,ds\right)^{1/p} 
      \left(\int_{k}^{k+1}1\,ds\right)^{1/p'} 
      \\
      & \zoom \leq\frac{[t]+1}{t}\| f \|_{L^{p}_{uloc}(X)}^p \leq2 \| f
      \|_{L^{p}_{uloc}(\R_+;X)}^p, 
    \end{aligned}
\end{equation*}
the last inequality being satisfied since $\frac{[x]+1}{x}\leq 2$ is valid for all
$x\geq1$.
\end{proof}

In the next section, we will focus on the case $p=2$ and $X = V'$. We will need the
following result, which is a consequence of Lemma~\ref{lem:energy-estimates}.
\begin{lemma}
  \label{lem:lemme_dissispation} 
  Let be given $\bv_{0}\in H$, ${\bf f} \in L^2_{uloc}(\R_+;V')$, and let $\vv$ be a
  global weak solution to the NSE corresponding to the above data. Then, we have, $\forall\,t\geq1$, 
 \begin{eqnarray} 
   \label{eq:dissipation_mean} 
   && M_t(\|\nabla\bv\|^2) \leq
    \frac{\|\bv_{0}\|^{2} }{\nu
      t}+2\frac{\mathcal{F}^{2}}{\nu^2}, \\
    &&    \label{eq:dissipation_mean_2}  M_t( \| \fv \|_{V'} ^2 ) \le 2 \mathcal{F}^{2}, 
  \end{eqnarray} 
  where ${\cal F} = \| {\bf f} \|_{L^2_{uloc}(\R_+;V') }$.
\end{lemma}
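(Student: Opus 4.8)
The plan is to derive both bounds directly from the already-established energy estimates for the global weak solution $\bv$, combined with the elementary counting inequality $\frac{[t]+1}{t}\le 2$ for $t\ge 1$, exactly in the spirit of Lemma~\ref{lem:lemme_Mt_ulc}.

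\medskip

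\noindent\textbf{Proof of \eqref{eq:dissipation_mean}.} By definition $M_t(\|\nabla\bv\|^2)=\frac1t\int_0^t\|\nabla\bv(s)\|^2\,ds$. I would simply invoke the dissipation estimate \eqref{eq:dissipation} from Lemma~\ref{lem:energy-estimates}, namely $\nu\int_0^t\|\nabla\bv(s)\|^2\,ds\le \|\bv_0\|^2+([t]+1)\frac{\mathcal F^2}{\nu}$. Dividing by $\nu t$ gives
\begin{equation*}
M_t(\|\nabla\bv\|^2)\le \frac{\|\bv_0\|^2}{\nu t}+\frac{[t]+1}{t}\cdot\frac{\mathcal F^2}{\nu^2}.
\end{equation*}
Since $\frac{[t]+1}{t}\le 2$ for all $t\ge 1$, this yields the claimed bound $M_t(\|\nabla\bv\|^2)\le \frac{\|\bv_0\|^2}{\nu t}+2\frac{\mathcal F^2}{\nu^2}$.

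\medskip

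\noindent\textbf{Proof of \eqref{eq:dissipation_mean_2}.} This is just \eqref{eq:dissipation_mean} read off the source term alone: $M_t(\|\fv\|_{V'}^2)=\frac1t\int_0^t\|\fv(s)\|_{V'}^2\,ds$. I would break $[0,t]$ into the integer blocks $[k,k+1]$, $k=0,\dots,[t]$, bound $\int_k^{k+1}\|\fv(s)\|_{V'}^2\,ds\le \mathcal F^2$ by the very definition of the $L^2_{uloc}$ norm, and sum to get $\int_0^t\|\fv(s)\|_{V'}^2\,ds\le ([t]+1)\mathcal F^2$. Dividing by $t$ and using $\frac{[t]+1}{t}\le 2$ for $t\ge 1$ gives $M_t(\|\fv\|_{V'}^2)\le 2\mathcal F^2$.

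\medskip

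\noindent There is no real obstacle here: both statements are immediate corollaries of Lemma~\ref{lem:energy-estimates} and the counting inequality, and the only mildly delicate point—already handled upstream—is the uniform-in-time control of the kinetic energy that makes \eqref{eq:dissipation} available for all $t\ge 0$. The lemma is essentially bookkeeping that isolates the two specific averaged quantities needed in the next section.
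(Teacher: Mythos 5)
Your proof is correct and follows essentially the same route as the paper: divide the dissipation bound~\eqref{eq:dissipation} by $\nu t$ and use $\frac{[t]+1}{t}\le 2$ for $t\ge 1$, with the second estimate obtained by the same block-counting argument already used in Lemma~\ref{lem:lemme_Mt_ulc}. The only difference is that you spell out the counting step that the paper leaves implicit (calling \eqref{eq:dissipation_mean_2} ``straightforward''), which is a fine level of detail.
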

\begin{proof}
  It suffices to divide estimate~\eqref{eq:dissipation} by $\nu t$.  Therefore, it follows that
\begin{equation*}
  \frac{1}{t} \int_0^t\|\nabla \bv(s)\|^2\,ds\leq
  \frac{\|\bv_0\|^2}{\nu t}+2\frac{\mathcal{F}^2}{\nu^2}.
\end{equation*}
Estimate~\eqref{eq:dissipation_mean_2}  is straightforward. 
\end{proof}
In particular the family $\{ M_t (\vv) \}_{t \in \R_+}$ is bounded in $V$. Therefore, we
can as of now state the following, which will be recalled in a more precise form in the
next sections.
\begin{corollary} 
  There exists $\vm$ such that -up to a sub-sequence- $M_t (\vv) \to \vm$ as $t\to\infty$,
  in appropriate topologies.
\end{corollary}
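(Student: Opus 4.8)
The plan is to show that the family $\{M_t(\vv)\}_{t\ge 1}$ is bounded in the Hilbert space $V$, to extract a weakly convergent subsequence by reflexivity, and then to upgrade the convergence by compactness of the embedding $V\hookrightarrow H$.

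First I would check that $M_t(\vv)\in V$ for every $t>0$. Since $\vv\in L^2_{loc}(\R_+;V)\hookrightarrow L^1([0,t];V)$, the Bochner integral $M_t(\vv)=\frac1t\int_0^t\vv(s)\,ds$ is a well-defined element of $V$ (using that $V$ is a closed subspace of $H^1_0(\Om)^3$, so that Bochner averages of $V$-valued integrable maps remain in $V$); in particular $\g\cdot M_t(\vv)=0$, and by the Reynolds rule $\g M_t(\vv)=M_t(\g\vv)$ in $\mathcal D'(\Om)$. Next, applying Lemma~\ref{lem:bochner} with $p=2$ and $X=L^2(\Om)^9$ to $\psi=\g\vv$ gives $\|\g M_t(\vv)\|^2=\|M_t(\g\vv)\|^2\le M_t(\|\g\vv\|^2)$ (this is exactly the inequality recorded in the Remark after Lemma~\ref{lem:lemma-orthogonality}), and Lemma~\ref{lem:lemme_dissispation}, estimate~\eqref{eq:dissipation_mean}, bounds the right-hand side by $\frac{\|\vv_0\|^2}{\nu}+2\frac{\mathcal F^2}{\nu^2}$ for all $t\ge1$. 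Combined with the Poincaré inequality with constant $C_\Om$, this produces a constant $C=C(\vv_0,\mathcal F,\nu,\Om)$ such that $\|M_t(\vv)\|_V\le C$ for all $t\ge1$. (Alternatively, the uniform $L^2$ bound~\eqref{eq:main_estimate_1} together with Lemma~\ref{lem:bochner} controls $\|M_t(\vv)\|$ directly, but the gradient estimate already suffices to control the full $V$-norm.)

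Since $V$ is reflexive, the bounded family $\{M_t(\vv)\}_{t\ge1}$ is relatively weakly compact. Hence, choosing any sequence $t_n\to+\infty$, there is a subsequence (not relabelled) and an element $\vm\in V$ with $M_{t_n}(\vv)\rightharpoonup\vm$ weakly in $V$. Because $\Om$ is bounded, the embedding $V\hookrightarrow H$ is compact (Rellich--Kondrachov), so $M_{t_n}(\vv)\to\vm$ strongly in $H$, and by Sobolev interpolation also strongly in $L^p(\Om)^3$ for every $p<6$. This is the convergence ``in appropriate topologies'' asserted by the corollary, and it is the starting point for the proof of Theorem~\ref{thm:main_theorem}, where further extractions along the same sequence yield the weak limits of $M_{t_n}(\g\vv)$, of $M_{t_n}\big((\vv\cdot\g)\vv\big)$, and of $M_{t_n}(\vv'\otimes\vv')$.

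The argument is essentially routine once the a priori dissipation estimate~\eqref{eq:dissipation_mean} is available; the only point demanding a little care is the bookkeeping in the second paragraph, namely that the Bochner average of the $V$-valued map $s\mapsto\vv(s)$ again lies in $V$ and commutes with the spatial gradient. After that, the extraction is the standard weak-compactness argument in a reflexive space, and I do not expect any genuine obstacle.
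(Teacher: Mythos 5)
Your argument is correct and is essentially the paper's own: the paper obtains the bound $\|\nabla M_t(\vv)\|=\|M_t(\nabla\vv)\|\le c$ by combining estimate~\eqref{eq:dissipation_mean} of Lemma~\ref{lem:lemme_dissispation} with estimate~\eqref{eq:1} of Lemma~\ref{lem:bochner}, and then extracts a weakly convergent subsequence in $V$ exactly as you do (see Proposition~\ref{prop:thm1}). Your extra remarks (Poincar\'e to control the full $V$-norm, Rellich--Kondrachov for strong convergence in $H$ and in $L^p(\Om)^3$, $p<6$) are correct refinements of the same approach, not a different route.
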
 
The following result is a direct consequence of~\eqref{eq:dissipation_mean}
and~\eqref{eq:dissipation_mean_2} combined with Cauchy-Schwarz inequality.
\begin{corollary} 
  The family $\{ M_t( < \fv , \vv > ) \}_{t >0}$ is bounded uniformly in $t$, and one has
  \begin{equation}
    \label{eq:M_t_bounded}  
    \big| M_t( < \fv , \vv > ) \big| \le \sqrt 2 \mathcal{F} \left ( \frac{\|\bv_{0}\|^{2} }{\nu
        t}+2\frac{\mathcal{F}^{2}}{\nu^2} \right )^{\frac{1}{2}}.
  \end{equation}
\end{corollary}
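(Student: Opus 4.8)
The plan is to reduce \eqref{eq:M_t_bounded} to the two averaged a priori bounds of Lemma~\ref{lem:lemme_dissispation}, via the duality pairing and the Cauchy--Schwarz inequality. First I would note that, since the norm on $V$ is $\|\g\,\cdot\,\|$, for almost every $s\in[0,t]$ one has the elementary duality estimate $|<\fv(s),\vv(s)>|\le\|\fv(s)\|_{V'}\,\|\g\vv(s)\|$. Averaging over $[0,t]$ and using $\big|M_t(\psi)\big|\le M_t(|\psi|)$ gives
\begin{equation*}
  \big|M_t( < \fv , \vv > )\big|\;\le\;\frac1t\int_0^t\|\fv(s)\|_{V'}\,\|\g\vv(s)\|\,ds\;=\;M_t\big(\|\fv\|_{V'}\,\|\g\vv\|\big).
\end{equation*}

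Next I would apply the Cauchy--Schwarz inequality to this last average, viewing $\|\fv\|_{V'}$ and $\|\g\vv\|$ as elements of $L^2\big([0,t];\tfrac{ds}{t}\big)$; the normalising factor $1/t$ is exactly the one that makes the two normalised $L^2$-norms appear, so that
\begin{equation*}
  \big|M_t( < \fv , \vv > )\big|\;\le\;\big(M_t(\|\fv\|_{V'}^2)\big)^{1/2}\,\big(M_t(\|\g\vv\|^2)\big)^{1/2}.
\end{equation*}
It then remains to insert \eqref{eq:dissipation_mean_2}, which bounds the first factor by $\sqrt2\,\mathcal F$, and \eqref{eq:dissipation_mean}, which bounds the second factor by $\big(\tfrac{\|\bv_0\|^2}{\nu t}+2\tfrac{\mathcal F^2}{\nu^2}\big)^{1/2}$; the product is precisely the right-hand side of \eqref{eq:M_t_bounded}. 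Finally, since $t\mapsto\tfrac{\|\bv_0\|^2}{\nu t}+2\tfrac{\mathcal F^2}{\nu^2}$ is non-increasing, for every $t\ge1$ the bound \eqref{eq:M_t_bounded} is dominated by the $t$-independent quantity $\sqrt2\,\mathcal F\big(\tfrac{\|\bv_0\|^2}{\nu}+2\tfrac{\mathcal F^2}{\nu^2}\big)^{1/2}$, which gives the asserted uniform boundedness.

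There is no real obstacle here: the statement is a two-line consequence of estimates already in hand, and the only point meriting attention is the bookkeeping of the factor $1/t$ in the Cauchy--Schwarz step. The single caveat concerns the range of $t$: Lemma~\ref{lem:lemme_dissispation} is stated for $t\ge1$, so for small $t$ one should instead bound $M_t(\|\g\vv\|^2)$ directly from \eqref{eq:dissipation} with $[t]=0$, which gives $M_t(\|\g\vv\|^2)\le\tfrac1{\nu t}\big(\|\bv_0\|^2+\mathcal F^2/\nu\big)$, and $M_t(\|\fv\|_{V'}^2)$ from \eqref{eq:1}; accordingly, \say{uniformly in $t$} is to be read as: bounded on $[\delta,+\infty)$ for each $\delta>0$, the genuinely $t$-independent tail being the quantity displayed above.
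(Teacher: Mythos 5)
Your proof is correct and follows exactly the route the paper intends: Cauchy--Schwarz applied to the time average, combined with the bounds \eqref{eq:dissipation_mean} and \eqref{eq:dissipation_mean_2} of Lemma~\ref{lem:lemme_dissispation}. Your caveat about $t<1$ (where those estimates are not stated and the displayed bound need not hold literally) is a fair and harmless refinement, since the corollary is only used along sequences $t_n\to+\infty$.
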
 
We finish this section with a last technical result, that we will need to prove 
Item \ref{it:foias_generalized})  of Theorem~\ref{thm:main_theorem}.
\begin{lemma}
  \label{lem:fbar=ftilde} 
  Let $1 < p < \infty$ and let be given $f \in L^{p}_{uloc}(\R_+;X)$, which satisfies in
  addition
  \begin{equation} 
    \label{eq:hyp_Mt} 
    \exists \   \widetilde {f}\in X, \quad \text{such that} \quad
    \lim_{t\to+\infty}\int_{t}^{t+1}\| f(s)-\widetilde f\|_{X}^p\,ds=0. 
  \end{equation} 
  Then, we have
  \begin{equation}
    \label{eq:limite_ftilde}
    \lim_{t\to+\infty} \frac{1}{t}\int_{0}^{t }\|f (s) - \widetilde{ f }\|_{X}^{p}\,ds=0.
\end{equation} 
Moreover, $M_t(f)$ weakly converges to $\widetilde f$ in $X$ when $t \to+\infty$. In
particular, we have $\widetilde f = \overline f$.
\end{lemma}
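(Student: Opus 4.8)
The plan is to deduce~\eqref{eq:limite_ftilde} from hypothesis~\eqref{eq:hyp_Mt} by a Cesàro-type averaging argument, exploiting the fact that a sequence converging to a limit has convergent arithmetic means. First I would set $a_k := \int_k^{k+1} \|f(s) - \widetilde f\|_X^p\, ds$ for $k \in \N$; hypothesis~\eqref{eq:hyp_Mt} says precisely that $a_k \to 0$ as $k \to \infty$ (note that $\int_t^{t+1}$ with integer $t$ is the relevant specialization, and continuity in $t$ of the sliding integral lets one pass freely between integer and real $t$, or one simply bounds $\int_t^{t+1} \le \int_{[t]}^{[t]+2} = a_{[t]} + a_{[t]+1}$). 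Then for real $t \ge 1$ one has the sandwich
\begin{equation*}
\frac{1}{t}\int_0^t \|f(s) - \widetilde f\|_X^p\, ds \;\le\; \frac{1}{t}\sum_{k=0}^{[t]} a_k,
\end{equation*}
and the right-hand side is (up to the harmless factor $([t]+1)/t \le 2$ and a shift) the Cesàro mean of the null sequence $\{a_k\}$, hence tends to $0$. This gives~\eqref{eq:limite_ftilde}.

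Next, to show $M_t(f) \rightharpoonup \widetilde f$ in $X$, I would write $M_t(f) - \widetilde f = M_t(f - \widetilde f) = \frac{1}{t}\int_0^t (f(s) - \widetilde f)\, ds$ and estimate its norm directly: by the Bochner inequality in the form of Lemma~\ref{lem:bochner} (or just the triangle inequality for the Bochner integral followed by Hölder in the $s$-variable on $[0,t]$),
\begin{equation*}
\|M_t(f) - \widetilde f\|_X \;\le\; \frac{1}{t}\int_0^t \|f(s) - \widetilde f\|_X\, ds \;\le\; \left(\frac{1}{t}\int_0^t \|f(s) - \widetilde f\|_X^p\, ds\right)^{1/p},
\end{equation*}
the last step being Jensen/Hölder with exponent $p$ on the probability space $([0,t], ds/t)$. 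By~\eqref{eq:limite_ftilde} this upper bound tends to $0$, so in fact $M_t(f) \to \widetilde f$ \emph{strongly} in $X$, which is more than the claimed weak convergence. Finally, since $M_t(f) \to \widetilde f$ and, by definition~\eqref{eq:time-filtering-def}, $M_t(f) \to \overline f$ whenever the latter exists, uniqueness of limits forces $\widetilde f = \overline f$.

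I do not expect a serious obstacle here; the only mild technical point is the bookkeeping between the continuous sliding average $\int_t^{t+1}$ in the hypothesis and the discrete blocks $a_k$ used in the Cesàro argument, which is handled by the trivial bound $\int_t^{t+1}\|f - \widetilde f\|_X^p\,ds \le a_{[t]} + a_{[t]+1}$ (valid since $[t,t+1] \subset [[t],[t]+2]$) together with the elementary fact that if $a_k \to 0$ then $\frac{1}{n}\sum_{k=0}^{n} a_k \to 0$. One should also note that $f - \widetilde f \in L^p_{uloc}(\R_+;X)$, so $M_t(f)$ is well defined for every $t > 0$ and all integrals above are finite; this is immediate from $f \in L^p_{uloc}(\R_+;X)$ and $\widetilde f \in X$ being constant.
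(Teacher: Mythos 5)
Your proof is correct and follows essentially the same route as the paper: both reduce the claim to $\frac{1}{t}\int_0^t\|f(s)-\widetilde f\|_X^p\,ds\to 0$ (your Ces\`aro-mean argument is just a repackaging of the paper's split into a fixed initial segment plus a uniformly small tail) and then conclude via the Bochner/H\"older bound $\|M_t(f)-\widetilde f\|_X\le\bigl(\frac{1}{t}\int_0^t\|f(s)-\widetilde f\|_X^p\,ds\bigr)^{1/p}$. Your remark that this actually gives strong convergence of $M_t(f)$ to $\widetilde f$ is a correct, mild strengthening of the stated weak convergence, and the paper's duality estimate (uniform over $\|\varphi\|_{X'}\le 1$) yields the same.
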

\begin{proof} 
  By the hypothesis~(\ref{eq:hyp_Mt}), we have that 
\begin{equation*}
  \forall\,\varepsilon>0\quad \exists\, M\in \N:\quad \int_{t}^{t+1} \|f (s) -
  \widetilde f\|_{X}^{p}\,ds<\frac{\varepsilon}{2}\qquad \forall\,t>M.
\end{equation*}
Hence, for  $t \geq M$, then 
\begin{equation*}
  \begin{aligned}
    \frac{1}{t}\int_{0}^{t}\|f(s) -
      \widetilde{f}\|_{X}^{p}\,ds=\frac{1}{t}\int_{0}^{M}\|f (s) -
      \widetilde{f}\|_{X}^{p}\,ds+\frac{1}{t}\int_{M}^{t}\|f(s) -
      \widetilde{f}\|_{X}^{p}\,ds
    \\ 
    \leq\frac{M}{t}\big ( \| f \|_{L^{p}_{uloc}(X)}^p+\|\widetilde{ f }\|_{X}^{p}\big )
    +\frac{[t ]+1-M}{t}\frac{\varepsilon}{2}.
  \end{aligned}
\end{equation*}
It follows that one can choose $M$ large enough such that
\begin{equation*}
  \frac{1}{t}\int_{0}^{t}
  \|f (s)-\widetilde { f }\|_{X}^{p}\,ds<\varepsilon\qquad \forall\, t>M,
\end{equation*}
hence, being this valid for arbitrary $\varepsilon>0$, it
follows~(\ref{eq:limite_ftilde}). 

It remains to prove the weak convergence of $M_t(f)$ to $\widetilde f\in X$ when
$t\to+\infty$. To this end, let be given $\varphi \in X'$. Then, we have
\begin{equation*}
  < \varphi, M_t(f) > - < \varphi, \widetilde f >\ = \frac{1}{t} \int_0^t
  <\varphi, f(s)- \widetilde f > \,ds, 
\end{equation*}
which leads to 
\begin{equation*}
  \big| < \varphi, M_t(f) > - < \varphi, \widetilde f > \big| \le  \frac{1}{t} 
  \int_0^t  \| \varphi \|_{X'} \|  f(s)- \widetilde f \|_X \,ds, 
\end{equation*}
and by H\"older inequality, 
\begin{equation*}
  \left| < \varphi, M_t(f) > - < \varphi, \widetilde f > \right| 
  \le \| \varphi\|_{X'} \left(  \frac{1}{t}  \int_0^t  \|  f(s)- \widetilde f \|_X^p \,ds \right )^{\frac{1}{p}}, 
\end{equation*}
yielding, by~(\ref{eq:limite_ftilde}), to $\zoom \lim_{t \to+\infty } < \varphi, M_t(f)>\ =\
<\varphi, \widetilde f >$, hence concluding the proof.
\end{proof} 
The following corollary definitively concludes Item~\ref{it:foias_generalized}) of
Theorem~\ref{thm:main_theorem}.
\begin{corollary} 
  Let be given $\bv_{0}\in H$, ${\bf f} \in L^2_{uloc}(\R_+;V')$ that
  satisfies~\eqref{eq:hyp_Mt}, and let $\vv$ be a global weak solution to the NSE
  corresponding to the above data. Moreover let $\vm$ be such that $ \zoom \lim_{t \to
    \infty} M_t \vv = \vm$ in $V$ (eventually up to a sub-sequence), then
\begin{equation*}
\lim_{t \to \infty} M_t ( < \fv , \vv > ) = \, < \overline \fv, \vm >.
\end{equation*}
\end{corollary}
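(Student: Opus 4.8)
The plan is to split the duality product $<\fv,\vv>$ according to the Reynolds-type decomposition of the force, $\fv = \overline{\fv} + (\fv - \overline{\fv})$, exploiting the fact that under~\eqref{eq:hyp_Mt} the fluctuating part of the force has a vanishing averaged $L^2(\R_+;V')$-norm. First I would invoke Lemma~\ref{lem:fbar=ftilde} with $p=2$ and $X = V'$, which yields on the one hand $\overline{\fv} = \widetilde{\fv}$, and on the other hand the decay
\begin{equation*}
  \lim_{t\to+\infty}\frac{1}{t}\int_0^t\|\fv(s)-\overline{\fv}\|_{V'}^2\,ds = 0 .
\end{equation*}
Since $\overline{\fv}$ is a fixed (time-independent) element of $V'$, the Bochner integral commutes with the continuous linear functional $<\overline{\fv},\cdot>$, so that for every $t>0$
\begin{equation*}
  M_t\big(<\fv,\vv>\big) = \ <\overline{\fv},M_t(\vv)> + \frac{1}{t}\int_0^t <\fv(s)-\overline{\fv},\vv(s)>\,ds .
\end{equation*}

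I would then handle the two terms separately. For the first term, since $M_t(\vv)\rightharpoonup \vm$ in $V$ along the sub-sequence fixing $\vm$, and $\overline{\fv}\in V'$, one has $<\overline{\fv},M_t(\vv)> \to \ <\overline{\fv},\vm>$ directly by the very definition of weak convergence. For the second term, the Cauchy--Schwarz inequality applied in the averaged sense bounds it by
\begin{equation*}
  \left(\frac{1}{t}\int_0^t\|\fv(s)-\overline{\fv}\|_{V'}^2\,ds\right)^{1/2}\big(M_t(\|\g\vv\|^2)\big)^{1/2},
\end{equation*}
whose first factor tends to $0$ as $t\to+\infty$ by the decay recalled above, and whose second factor is uniformly bounded for $t\ge 1$ thanks to estimate~\eqref{eq:dissipation_mean} of Lemma~\ref{lem:lemme_dissispation}. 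Hence this term tends to $0$, and summing the two contributions gives $\lim_{t\to+\infty} M_t(<\fv,\vv>) = \ <\overline{\fv},\vm>$.

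There is no real obstacle here: the argument is essentially a bookkeeping combination of Lemma~\ref{lem:fbar=ftilde} (which supplies the decay of the force fluctuation in Ces\`aro mean) with the uniform dissipation bound of Lemma~\ref{lem:lemme_dissispation} (which controls $\vv$ in the averaged $V$-norm). The only point deserving a word of care is that the convergence $M_t(\vv)\to\vm$ is only available up to a sub-sequence, so the conclusion is understood along that sub-sequence; note however that the vanishing of the cross term holds for the full limit $t\to+\infty$, independently of any sub-sequence, so that the sub-sequence enters only through the first term.
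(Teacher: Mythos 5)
Your proposal is correct and follows essentially the same route as the paper's proof: the same splitting $\fv=\overline{\fv}+(\fv-\overline{\fv})$, the weak convergence $M_t(\vv)\rightharpoonup\vm$ in $V$ for the mean part, and a Cauchy--Schwarz estimate combining~\eqref{eq:limite_ftilde} from Lemma~\ref{lem:fbar=ftilde} with the bound~\eqref{eq:dissipation_mean} for the fluctuating part. Your extra remark that the cross term vanishes along the full limit $t\to+\infty$, while the sub-sequence is only needed for the mean term, is a correct clarification of a point the paper leaves implicit.
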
 
\begin{proof} 
 Let us write the following decomposition:
  \begin{equation*}
    \frac{1}{t}\int_{0}^{t}<\bff,\bv>\,ds=
    \frac{1}{t}\int_{0}^{t}<\bff-
    \overline{\bff},\bv>\,ds+\frac{1}{t}\int_{0}^{t}< \overline{\bff},\bv>\,ds. 
  \end{equation*}
  On one hand since $\mean{\bff}\in V$ is independent of $t$, we obviously have
  \begin{equation*} 
%    \label{eq:ine_final_1} 
    \frac{1}{t}\int_{0}^{t}<\overline{\bff},\bv>\,ds\to\  <\mean{\bff},\mean{\bv}>.
  \end{equation*} 
  On the other hand, we have also 
  \begin{equation}
    \label{eq:ine_final}
    \begin{aligned}
      \left \vert \frac{1}{t}\int_{0}^{t}<\bff- \overline{\bff},\bv>\,ds
      \right \vert& \leq \frac{1}{t}\int_{0}^{t}\|\bff- \overline{\bff}\|_{V'}\|\nabla
      \bv\|\,ds
      \\
      &\hspace{-2cm}\leq\left(\frac{1}{t}\int_{0}^{t}\|\bff-
        \overline{\bff}\|_{V'}^{2}\,ds\right)^{1/2}\left(\frac{1}{t}\int_{0}^{t}\|\nabla
        \bv\|^{2}\,ds\right)^{1/2}.
    \end{aligned}
  \end{equation} 
  Combining~\eqref{eq:limite_ftilde} with~\eqref{eq:dissipation_mean} shows that the
  right-hand side in~\eqref{eq:ine_final} vanishes as $t \to \infty$. \end{proof} 
\begin{remark}
  It is important to observe that 
  \begin{equation*}
    M_t\big (<\bff,\bv>\big )\to \ <\overline{\bff},\overline{\bv}>,
%<\,,\,> 
  \end{equation*}
  is --in some sense-- an assumption on the (long-time) behavior of the ``covariance''
  between the external force and the solution itself.  Cf.~Layton~\cite{Lay2014} for a
  related result in the case of ensemble averages.

  The control of the (average/expectation of) kinetic energy in terms of the energy input
  is one of the remarkable features of classes of statistical solutions, making the
  stochastic Navier-Stokes equations very appealing in this context. See the review, with
  applications to the determination of the Lilly constant, in Ref.~\cite{BF2010}. See
  also~\cite{Fla2005}.
\end{remark}

\section{Proof of Theorem~\ref{thm:main_theorem}}
\label{sec:proof-main1}
In all this section we have as before $\vv_0 \in H$, ${\bf f} \in L^2_{uloc}(\R_+;V')$,
and $\vv$ is a global weak solution to the NSE~\eqref{eq:NSE} corresponding to the above
data.  We split the proof of Theorem~\ref{thm:main_theorem} into two steps. We first apply
the operator $M_t$ to the NSE, then we extract sub-sequences and take the limit in the
equations. In the second step we make the identification with the Reynolds stress $\reyn$
and show that it is dissipative in average, at least when ${\bf f}$ satisfies in
addition~\eqref{eq:additional_ass_f}.
\subsection{Extracting sub-sequences} 
We set:
\begin{equation*}
 \bV_t(\bx):=M_t(\bv)(\bx).
\end{equation*}
Applying the operator $M_t$ on the NSE we see that for almost all $t\geq0$ and for all
$\bphi\in V$, the field $\bV_t$ is a weak solution of the following steady Stokes problem
(where $t>0$ is simply a parameter)
\begin{equation}
  \label{eq:Reynolds_weak_form}
  \begin{aligned}
    \nu\int_\Omega\nabla \bV_t:\nabla\bphi\,d\bx+\int_{\Omega}M_t((\bv\cdot\nabla)\,
    \bv)\cdot\bphi\,d\bx&=\ < M_t(\bff),\bphi>
    \\
   &+ \int_\Omega\frac{\bv_0-\bv(t)}{t}\cdot\bphi\,d\bx.
  \end{aligned}
\end{equation}
The full justification of the equality~\eqref{eq:Reynolds_weak_form} starting from the
definition of global weak solutions can be obtained by following a very well-known path
used for instance to show with a lemma by Hopf that Leray-Hopf weak solutions can be
re-defined on a set of zero Lebesgue measure in $[0,t]$ in such a way that $\bv(s)\in H$
for \textit{all} $s\in[0,t]$, see for instance Galdi~\cite[Lemma~2.1]{Gal2000a}. In fact,
by following ideas developed among the others by Prodi~\cite{Pro1959}, one can take
$\chi_{[a,b]}$ the characteristic function of an interval $[a,b]\subset\R$, and use as
test function its regularization multiplied by $\bphi\in V$. Passing to the limit as the
regularization parameter vanishes one gets~\eqref{eq:Reynolds_weak_form}.

The process of extracting sub-sequences, which is the core of the main result, is reported
in the following proposition.
\begin{proposition}
  \label{prop:thm1}
  Let be given a global solution $\bv$ to the NSE, corresponding to the data $\bv_{0}\in
  H$ and $\bff\in L^{2}_{uloc}(\R_{+};V')$. Then, there exist
  \begin{enumerate}[a)]
  \item a sequence $\{t_n\}_{n\in \N}$ that goes to $+\infty$
    when $n$ goes to $+\infty$;
  \item a vector field $\mean{\bff} \in V'$; 
 \item a vector field $\mean{\bv} \in V$;
 \item a vector field $\bB\in L^{3/2}(\Omega)^3$;  

such that such that it holds when $n \to \infty$:
   \begin{equation*}
     \begin{aligned}
       &M_{t_{n}}(\bff)\rightharpoonup \mean{\bff} \qquad \text{in } V',
       \\
      % \bV_{t_n}
       &M_{t_{n}}(\bv)\rightharpoonup \mean{\bv} \qquad \text{in } V,
       \\
       &M_{t_{n}}\big ((\bv\cdot\nabla)\,\bv\big )\rightharpoonup\bB
       \qquad \text{in } L^{3/2}(\Omega)^3\subset V',
     \end{aligned}
   \end{equation*}
and for all $\bphi\in V$
\begin{equation}
  \label{eq:Reynolds0}
  \nu\int_\Omega\nabla\mean{\bv}:\nabla\bphi\,d\bx
  +\int_\Omega\bB\cdot\bphi\,d\bx=\ <\mean{\bff},\bphi>.   
\end{equation}
Moreover, by defining
\begin{equation} 
  \label{eq:def_of_F} 
  \bF:=\bB-(\mean{\bv}\cdot\nabla)\,\mean{\bv}\in L^{3/2}(\Omega)^3,
\end{equation} 
we can also rewrite~\eqref{eq:Reynolds0} as follows
\begin{equation}
  \label{eq:Reynolds}
  \nu\int_\Omega\nabla \mean{\bv}:\nabla\bphi\,d\bx+
  \int_\Omega(\mean{\bv}\cdot\nabla)\,\mean{\bv}\cdot\bphi\,d\bx
  +\int_\Omega\bF\cdot\bphi\,d\bx=\ < \mean{\bff},\bphi>;
\end{equation}
\item writing $\vv' = \vv- \vm$, $\fv' = \fv - \overline \fv$, we also have
\begin{equation*}
M_{t_n} ( < \fv, \vv > ) \to \,  <\overline \fv, \vm > + \overline{ < \fv', \vv' > }.
\end{equation*}
     \end{enumerate} 
  \end{proposition}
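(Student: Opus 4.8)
\textbf{Proof proposal for Proposition~\ref{prop:thm1}.}
The plan is to obtain all the claimed weak limits from the uniform bounds established in Section~\ref{sec:reynolds_rules}, then to pass to the limit in the Stokes-type identity~\eqref{eq:Reynolds_weak_form}, and finally to treat the nonlinear and the force-work terms separately. First I would recall that by Lemma~\ref{lem:lemme_dissispation}, the family $\{M_t(\bv)\}_{t\ge 1}$ is bounded in $V$; since $V$ is a reflexive (Hilbert) space, there is a sequence $t_n\to+\infty$ and a $\mean{\bv}\in V$ with $M_{t_n}(\bv)\rightharpoonup\mean{\bv}$ in $V$. Similarly, by Lemma~\ref{lem:lemme_Mt_ulc} applied with $p=2$, $X=V'$, the family $\{M_t(\bff)\}_{t\ge 1}$ is bounded in $V'$, so after passing to a further sub-sequence $M_{t_n}(\bff)\rightharpoonup\mean{\bff}$ in $V'$. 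For the nonlinear term, I would first note that $M_t\big((\bv\cdot\nabla)\bv\big)=M_t\big(\nabla\cdot(\bv\otimes\bv)\big)=\nabla\cdot M_t(\bv\otimes\bv)$ by the Reynolds rule (Lemma on commuting $M_t$ with spatial derivatives), and then estimate $\|\bv\otimes\bv\|_{L^{3/2}}\le\|\bv\|_{L^3}^2\le C\|\bv\|\,\|\nabla\bv\|$ by the interpolation $L^3\subset[L^2,H^1]$ in dimension three together with the embedding $H^1\hookrightarrow L^6$; since $\|\bv(s)\|$ is uniformly bounded in time by~\eqref{eq:main_estimate_1} and $M_t(\|\nabla\bv\|^2)$ is uniformly bounded by~\eqref{eq:dissipation_mean}, the Cauchy--Schwarz inequality gives that $M_t(\bv\otimes\bv)$ is bounded in $L^{3/2}(\Omega)^9$, hence $M_t\big((\bv\cdot\nabla)\bv\big)$ is bounded in $W^{-1,3/2}(\Omega)^3\subset V'$; extracting once more we get $M_{t_n}\big((\bv\cdot\nabla)\bv\big)\rightharpoonup\bB$ in $L^{3/2}(\Omega)^3$.

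Next I would pass to the limit in~\eqref{eq:Reynolds_weak_form} along $t_n$, for fixed $\bphi\in V$. The term $\nu\int_\Omega\nabla\bV_{t_n}:\nabla\bphi$ converges to $\nu\int_\Omega\nabla\mean{\bv}:\nabla\bphi$ by weak convergence in $V$; the term $\int_\Omega M_{t_n}\big((\bv\cdot\nabla)\bv\big)\cdot\bphi$ converges to $\int_\Omega\bB\cdot\bphi$ by weak convergence in $L^{3/2}$ (against $\bphi\in L^3$, since $V\hookrightarrow L^6\subset L^3$); the right-hand side $\langle M_{t_n}(\bff),\bphi\rangle$ converges to $\langle\mean{\bff},\bphi\rangle$ by weak convergence in $V'$; and the remainder $\int_\Omega\frac{\bv_0-\bv(t_n)}{t_n}\cdot\bphi$ tends to zero because $\|\bv_0-\bv(t_n)\|\le\|\bv_0\|+\|\bv(t_n)\|$ is uniformly bounded by~\eqref{eq:main_estimate_1} while $1/t_n\to0$. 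This yields~\eqref{eq:Reynolds0}. Defining $\bF$ by~\eqref{eq:def_of_F} — which makes sense since $(\mean{\bv}\cdot\nabla)\mean{\bv}\in L^{3/2}$ by the same interpolation argument applied to $\mean{\bv}\in V$ — and rearranging gives~\eqref{eq:Reynolds} immediately.

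It remains to treat the convergence of $M_{t_n}(\langle\bff,\bv\rangle)$. By the Corollary following Lemma~\ref{lem:lemme_dissispation}, this family is bounded (estimate~\eqref{eq:M_t_bounded}), so a sub-sequence converges to some real number $\ell$. The point is to identify $\ell$ as $\langle\mean{\bff},\mean{\bv}\rangle+\overline{\langle\bff',\bv'\rangle}$, where $\overline{\langle\bff',\bv'\rangle}$ is \emph{defined} as this limit minus $\langle\mean{\bff},\mean{\bv}\rangle$; more precisely, using the Reynolds decompositions $\bv=\mean{\bv}+\bv'$ and $\bff=\mean{\bff}+\bff'$ I would expand $\langle\bff,\bv\rangle=\langle\mean{\bff},\mean{\bv}\rangle+\langle\mean{\bff},\bv'\rangle+\langle\bff',\mean{\bv}\rangle+\langle\bff',\bv'\rangle$, apply $M_{t_n}$, and check that the two mixed terms vanish in the limit: indeed $M_{t_n}(\langle\mean{\bff},\bv'\rangle)=\langle\mean{\bff},M_{t_n}(\bv)-\mean{\bv}\rangle\to0$ by the weak convergence $M_{t_n}(\bv)\rightharpoonup\mean{\bv}$ in $V$ (note $\mean{\bff}\in V'$ is a fixed functional), and likewise $M_{t_n}(\langle\bff',\mean{\bv}\rangle)\to0$ since $\mean{\bv}\in V$ is fixed and $M_{t_n}(\bff)\rightharpoonup\mean{\bff}$ in $V'$. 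Hence $\overline{\langle\bff',\bv'\rangle}:=\lim_n M_{t_n}(\langle\bff',\bv'\rangle)=\ell-\langle\mean{\bff},\mean{\bv}\rangle$ exists, which is exactly the asserted identity. The main obstacle — and the reason this last item is not entirely routine — is precisely that $\overline{\langle\bff',\bv'\rangle}$ cannot be computed as an independent limit in general (one does not control $M_{t_n}(\langle\bff',\bv'\rangle)$ by the product of the separate averages of fluctuations, since neither $M_{t_n}(\bv'\otimes\bv')$-type limits nor the $V'$-weak limit of $\bff'$ yield strong convergence), so it must be introduced \emph{by difference}; one has to be careful that each of the two cross terms genuinely tends to zero, which hinges on one factor in each product being the fixed limit object rather than a moving sequence. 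All of the regularity assertions ($\mean{\bv}\in V$, $\bB,\bF\in L^{3/2}$, $\mean{\bff}\in V'$) are then read off directly from the compactness arguments above.
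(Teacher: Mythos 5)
Your overall route is the same as the paper's: use the uniform bounds of Lemma~\ref{lem:lemme_dissispation} and Lemma~\ref{lem:lemme_Mt_ulc} to extract weak limits of $M_{t_n}(\bv)$ in $V$ and $M_{t_n}(\bff)$ in $V'$, pass to the limit in~\eqref{eq:Reynolds_weak_form} (the remainder $(\bv_0-\bv(t_n))/t_n$ being killed by~\eqref{eq:main_estimate_1}), and, for the last item, define $\overline{<\fv',\vv'>}$ \emph{by difference} after expanding $<\fv,\vv>$ and checking that each cross term vanishes because it pairs a fixed limit object with a weakly convergent sequence; this is exactly the paper's argument, including the use of~\eqref{eq:M_t_bounded} to first extract a convergent subsequence of $M_{t_n}(<\fv,\vv>)$.

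There is, however, one step that does not hold as written: the compactness claim for the convective term, i.e.\ item d). You bound $M_t(\bv\otimes\bv)$ in $L^{3/2}(\Omega)^9$ and conclude only that $M_t\big((\bv\cdot\nabla)\,\bv\big)=\nabla\cdot M_t(\bv\otimes\bv)$ is bounded in $W^{-1,3/2}(\Omega)^3$; from such a bound you cannot extract a subsequence converging weakly in $L^{3/2}(\Omega)^3$, which is precisely what the proposition asserts and what you then use to pass to the limit in $\int_\Omega M_{t_n}\big((\bv\cdot\nabla)\,\bv\big)\cdot\bphi\,d\bx$ against $\bphi\in L^3(\Omega)$. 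The inclusion $W^{-1,3/2}(\Omega)\subset V'$ you invoke is also false, since $H^1_0(\Omega)\not\subset W^{1,3}_0(\Omega)$ in three dimensions (what is true, and what the statement uses, is $L^{3/2}(\Omega)\subset L^{6/5}(\Omega)\subset V'$); nor can you instead keep the term in the form $-\int_\Omega M_{t_n}(\bv\otimes\bv):\nabla\bphi\,d\bx$, because $\nabla\bphi$ is only in $L^2$ for a general $\bphi\in V$. The repair is the direct estimate the paper uses: $\|(\bv\cdot\nabla)\,\bv\|_{L^{3/2}}\le\|\bv\|_{L^6}\|\nabla\bv\|\le C_S\|\nabla\bv\|^2$, so that $M_t\big(\|(\bv\cdot\nabla)\,\bv\|_{L^{3/2}}\big)\le C_S\,M_t(\|\nabla\bv\|^2)$ is uniformly bounded by~\eqref{eq:dissipation_mean}, giving the $L^{3/2}$ bound on $M_t\big((\bv\cdot\nabla)\,\bv\big)$ itself and hence the weak compactness you need; all your ingredients are already in place for this one-line fix, after which the rest of your proof goes through and coincides with the paper's.
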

\begin{proof}[Proof of Proposition~\ref{prop:thm1}]
  As ${\bf f} \in L^2(\R_+;V')$, we deduce from Lemma~\ref{lem:lemme_Mt_ulc} that
  $\{M_t({\bf f})\}_{t>0}$ is bounded in $V'$.  Hence, we can use weak pre-compactness of
  bounded sets in the Hilbert space $V'$ to infer the existence of $t_n$ and
  $\mean{\bff}\in V'$ such that $M_{t_n}(\bff)\rightharpoonup \mean{\bff}$ in $V'$.
  Next, estimate~\eqref{eq:dissipation_mean} from Lemma~\ref{lem:lemme_dissispation},
  combined with estimate~\eqref{eq:1} from Lemma~\ref{lem:bochner}, leads to the bound
  \begin{equation*}
    \exists\,c>0:\qquad  \|\nabla M_{t}(\bv)\| = \|M_{t}(\nabla\bv)\|\leq c \qquad\forall\,t>0,
  \end{equation*}
  proving (up to a the extraction of a further sub-sequence from $\{t_n\}$, which we call
  with the same name) that $M_{t_{n}}(\bv)\rightharpoonup\mean{\bv}$ in $V'$.

  Then, we observe that, if $\bv\in L^{\infty}(0,T;H)\cap L^{2}(0,T;V)$ by classical
  interpolation
\begin{equation*}
  (  \bv\cdot\nabla)\,\bv\in L^{r}(0,T;L^{s}(\Omega))\qquad \text{with
  }\quad\frac{2}{r}+\frac{3}{s}=4,\quad  r\in[1,2].
\end{equation*}
In particular, we get
\begin{equation*}
  \|  (
  \bv\cdot\nabla)\,\bv\|_{L^{3/2}(\Omega)}\leq\|\bv\|_{L^{6}}\|\nabla
  \bv\|_{L^{2}}\leq C_{S}\|\nabla\bv\|^{2},
\end{equation*}
where $C_{S}$ is the constant of the Sobolev embedding $H^{1}_{0}(\Omega)\hookrightarrow
L^{6}(\Omega)$. Hence, by using the bounds~\eqref{eq:energy_1}-\eqref{eq:dissipation} on
the weak solution $\bv$ we obtain that
\begin{equation*}
\exists\,c:\qquad  \|M_{t}\big ((\bv\cdot\nabla)\,\bv)\big )\|_{L^{3/2}(\Omega)}\leq
  c,\qquad\forall\,t>0,
\end{equation*}
proving that, up to a further sub-sequence relabelled again as $\{t_n\}$,
\begin{equation*}
  M_{t_{n}}\big ((\bv\cdot\nabla)\,\bv\big )\rightharpoonup \bB \qquad
  \text{in } L^{3/2}(\Omega)^{3}, 
\end{equation*}
for some vector field $\bB\in L^{3/2}(\Omega)^{3}$.

Next, we use~\eqref{eq:energy_1} which shows that
\begin{equation*}
      \int_\Omega\frac{\bv_0-\bv(t)}{t}\cdot\bphi\,d\bx\to0\qquad
      \text{as }t\to+\infty.
\end{equation*}
Then, writing the weak formulation and by using the results of weak convergence previously
proved, we get~\eqref{eq:Reynolds0}. Then, the identity~\eqref{eq:Reynolds} comes simply
from the definition~(\ref{eq:def_of_F}) of $\bF$.

It remains to prove the last item. We know from~\eqref{eq:M_t_bounded} that the sequence $
\{ M_{t_n} (< \fv, \vv > ) \} _{n \in \N}$ is bounded in $\R$. By extracting again a
sub-sequence (still denoted by $\suite t n$), we can get a convergent sequence still
denoted (after relabelling) by $ \{ M_{t_n} (< \fv, \vv > ) \} _{n \in \N}$, and let
$\overline { < \fv, \vv > }$ be its limit. Let us write the decomposition
\begin{equation}
  \label{eq:decomposition_fluxes} 
  \begin{array}{l} M_{t_n} (< \fv, \vv >) 
    \\ 
    \\
    \hskip 1cm 
    =<\overline \fv, \vm > + M_{t_n} (< \fv', \vm >) + M_{t_n} (< \overline \fv, \vv' >)+
    M_{t_n} ( < \fv', \vv' > ). 
  \end{array} 
\end{equation}
As $M_{t_n} (< \fv', \vm >) = \, < M_{t_n} ( \fv'), \vm >$, we deduce from the results
above that $M_{t_n} (< \fv', \vm >) \to 0$ as $n \to \infty$. Similarly, we also have
$M_{t_n} (< \overline \fv, \vv' >) \to 0$. Hence, we deduce
from~\eqref{eq:decomposition_fluxes} that $\{ M_{t_n} ( < \fv', \vv' > ) \}_{n \in \N}$ is
convergent, and if we denote by $\overline {< \fv', \vv' >}$ its limit, the following
natural decomposition holds true:
\begin{equation}
  \label{eq:reynolds_stress_ext}
  \overline { < \fv, \vv >} = <\overline \fv, \vm >+\overline {< \fv', \vv' >}, 
\end{equation}
concluding the proof.
\end{proof}
\subsection{Reynolds stress, energy balance and dissipation}
\label{sec:Reynolds-stress}
In the first step we have identified a limit $(\mean{\bv},\mean{\bff})$ for the
time-averages of both velocity and external force ($\bv,\bff)$. We need now to recast this
in the setting of the Reynolds equations, in order to address the proof of the Boussinesq
assumption.
\begin{proof}[Proof of Theorem~\ref{thm:main_theorem}]
  Beside the results in Proposition~\ref{prop:thm1}, in order to complete the proof of
  Theorem~\ref{thm:main_theorem}, we have to prove the following facts:
  \begin{enumerate}[1)]
  \item \label{it:item1} the proper identification of the limits with the Reynolds stress
    $\reyn$;
  \item\label{it:item2} the energy balance for $\mean{\bv}$;
  \item \label{it:item3} to prove that~\eqref{eq:reyn_dissipative2}  holds, namely
  \begin{equation*}
    % \label{eq:reyn_dissipative2_1} 
    \E =  \nu  \mean{\|\nabla{\bv}'\|^2}\le \int_\Omega(\nabla\cdot
    \reyn)\cdot\mean{\bv}\,d\bx + \overline {< \fv', \vv' > }.
  \end{equation*}  
\end{enumerate}
We proceed in the same order.

{\sl Item \ref{it:item1}}.  Since $\bv\in L^2(0,T;V)\subset L^2(0,T;L^6(\Omega)^3)$, it
  follows that $\bv\otimes\bv\in L^1(0,T;L^3(\Omega))$. Hence, the same argument as in the
  previous subsection shows that (possibly up to the extraction of a further sub-sequence)
  there exists a second order tensor $\boldsymbol{\theta} \in L^{3}(\Omega)^{9}$ such that
  \begin{equation*}
    M_{t_n}(\bv\otimes\bv)\rightharpoonup \boldsymbol{\theta} \qquad\text{in }L^{3}(\Omega)^{9}.
  \end{equation*}
  Let us set
  \begin{equation*}
    \reyn:=\boldsymbol{\theta}-\mean{\bv}\otimes\mean{\bv}.
  \end{equation*}
  Since the operator $M_t$ commutes with the divergence operator, the
  equation~\eqref{eq:Reynolds_weak_form} becomes
  \begin{equation}
    \label{eq:Reynolds_weak_form2}
    \begin{aligned}
      \nu\int_\Omega\nabla \bV_t:\nabla\bphi\,d\bx- \int_{\Omega}M_t(\vv \otimes \vv): \g
      \bphi\,d\bx&=\ < M_t(\bff),\bphi>
      \\
      &+ \int_\Omega\frac{\bv_0-\bv(t)}{t}\cdot\bphi\,d\bx.
    \end{aligned}
  \end{equation}
  Then, by taking the limit along the sequence $t_{n}\to+\infty$
  in~(\ref{eq:Reynolds_weak_form2}), we get\footnote{%\color{red}
    {According to the formal decomposition~(\ref{eq:decomposition_Reynolds_stress}), this
      suggests that $M_{t_n}(\vv' \otimes \vv') \to 0$, provided that one is able to give
      a rigorous sense and sufficiently strong bounds on $\vv'\otimes \vv'$, for the weak
      solution $\vv$.
%This step, which is not necessary to      conclude, has not be done although it is
%probably feasible.
}} the equality
  \begin{equation*}
    \bF=\nabla\cdot\reyn.
  \end{equation*}
  \medskip

  {\sl Item \ref{it:item2}}.  We use $\mean{\bv}\in V$ in~\eqref{eq:R} as test function
  and we obtain the equality
  \begin{equation} 
    \label{eq:energy_balance_moy} 
    \nu\|\nabla\mean\bv\|^2+\int_\Omega
    (\nabla\cdot\reyn)\cdot\mean{\bv}\,d\bx=\ <\mean{\bff},\mean{\bv}>.
  \end{equation}  
  We observe that due to the absence
  of the time-variable the following identity concerning the integral over the space
  variables is valid
  \begin{equation*}
    \int_\Omega (\mean{\bv}\cdot\nabla)\,\mean{\bv}\cdot\mean{\bv}\,d\bx=
    \int_\Omega (\mean{\bv}\cdot\nabla)\,\frac{|\mean{\bv}|^2}{2}\,d\bx=0\qquad
    \forall\,\mean{\bv}\in V.
  \end{equation*}
  This is one of the main technical facts which are typical of the mathematical analysis
  of the steady Navier-Stokes equations and which allow to give precise results for the
  averaged Reynolds equations. On the other hand, we recall that if $\bv(t,\bx)$ is a
  non-steady (Leray-Hopf) weak solution, then the space-time integral 
  \begin{equation*}
    \int_0^T\int_\Omega
    ({\bv}\cdot\nabla)\,\bv\cdot{\bv}\,d\bx\,dt,
  \end{equation*}
  is not well defined and consequently the above integral vanishes only formally.
\medskip 

  {\sl Item~\ref{it:item3}}. From now, we assume that the
  assumption~\eqref{eq:additional_ass_f} in the statement of
  Theorem~\ref{thm:main_theorem} holds true. We integrate the energy
  inequality~\eqref{eq:energy} between $0$ and $t_{n}$ and we divide the result by $t_n>0$,
  which leads to
  \begin{equation}
    \label{eq:energy_averaged2}
    \frac{\|\bv(t)\|^{2}}{2t_n}+\frac{1}{t_n}\int_{0}^{t_n}\|\nabla
    \bv(s)\|^{2}\,ds\leq   
    \frac{\|\bv_{0}\|^{2}}{2t_n}+\frac{1}{t_n}\int_{0}^{t_n}<\bff,\bv>\,ds.
  \end{equation}
  Recall that by Lemma~\ref{lem:energy-estimates}
  \begin{equation*}
    \frac{\|\bv(t)\|^{2}}{2t}\to0\quad\text{and}\quad
    \frac{\|\bv_{0}\|^{2}}{2t}\to0\quad\text{as }t\to+\infty.
  \end{equation*}
   Therefore, we take the limit in~\eqref{eq:energy_averaged2} and we use~\eqref{eq:reynolds_stress_ext}, which yields
 \begin{equation*}
\nu \overline { \| \g \vv \|^2} \le \overline {< \fv , \vv >} =  \, <\overline \fv, \vm > + \, \overline {< \fv', \vv' >}.
\end{equation*}
By~\eqref{eq:energy_balance_moy}  we then have
\begin{equation*}
\nu \overline { \| \g \vv \|^2} \le  \nu\|\nabla\mean\bv\|^2+\int_\Omega
    (\nabla\cdot\reyn)\cdot\mean{\bv}\,d\bx +  \, \overline {< \fv', \vv' >},
  \end{equation*}
  which yields~\eqref{eq:reyn_dissipative2}  by~\eqref{eq:new_Foias}, concluding the
  proof.
 \end{proof}
\section{On ensemble averages}
\label{sec:ensemble}
In this section we show how to use the results of Theorem~\ref{thm:main_theorem} to give
new insight to the analysis of ensemble averages of solutions. In this case we study
suitable averages of the long-time behavior and not the long-time behavior of statistics,
as in Layton~\cite{Lay2014}.

Since we first take long-time limits and then we average the Reynolds
equations, the initial datum is not so relevant. In fact due to the fact that it holds
\begin{equation*}
  \frac{  \|\bv_0\|^2}{t}\to0\qquad \text{as }t\to+\infty,
\end{equation*}
then the mean $\mean{\bv}$ is not affected by the initial datum.

As claimed in the introduction, we consider now the problem of having several external
forces, say a whole family $\{\bff^k\}_{k\in\N}\subset V'$, all independent of time. We
can think as different experiments with slightly different external forces, whose
difference can be due to errors in measurement or in the uncertainty intrinsic in any
measurement method. In particular, one can consider for a given force $\bff$ and that
$\{\bff^k\}$ will represent small oscillations around it, hence we can freely assume that
we have an uniform bound
\begin{equation}
  \label{eq:bound_f_k}
  \exists\,C>0:\qquad \|\bff^k\|_{V'}\leq C\qquad \forall\,k\in\N.
\end{equation} 
Having in mind this physical setting, we denote by $\mean{\bv^k}\in V$ the long-time
average of the solution corresponding to the external force $\bff^k\in V'$ and, as
explained before (without loss of generality) to the initial datum $\bv_0=\mathbf{0}$. The
vector $\mean{\bv^k}\in V$ satisfies for all $\bphi\in V$ the following equivalent
equalities for all $k\in\N$
\begin{align*}
%  \label{eq:Reynolds2}
  & \nu\int_\Omega\nabla \mean{\bv^k}:\nabla\bphi\,d\bx+
  \int_\Omega\bB^k\cdot\bphi\,d\bx=\ < {\bff^k},\bphi>,
  \\
  &\nu\int_\Omega\nabla \mean{\bv^k}:\nabla\bphi\,d\bx+
  \int_\Omega(\mean{\bv^k}\cdot\nabla)\,\mean{\bv^k}\cdot\bphi\,d\bx
  +\int_\Omega\bF^k\cdot\bphi\,d\bx=\ < {\bff^k},\bphi>,
\end{align*}
for appropriate $\bB^k,\bF^k\in L^3(\Omega)^{3/2}$. Since both $V$ and $V'$ are Hilbert
spaces, by using~\eqref{eq:bound_f_k} it follows that there exists $\smean{\bff}\in V'$
and a sub-sequence (still denoted by $\{\bff^k\}$) such that
\begin{equation*}
  \bff^k\rightharpoonup \smean{\bff}\qquad \text{ in }V'.
\end{equation*}
Our intention is to characterize, if possible, the limit of
$\{\mean{\bv}^k\}_{k\in\N}$. If the forces are fluctuations around a mean value, then the
field $\mean{\bv^k}$ will remain bounded in $V$, but possibly without converging to some
limit. From an heuristic point of view one can expect that averaging the sequence of
velocities (which corresponds to averaging the result over different realizations) one can
identify a proper limit, which retains the ``average'' effect of the flow. 

Again, it comes into the system the main idea at the basis of Large Scale methods: The
average behavior of solutions seems the only quantity which can be measured or
simulated. 

It is well-known that one of the most used \textit{summability technique} is that of
Ces\`aro and consists in taking the mean values, hence we focus on the arithmetic mean of
time-averaged velocities
\begin{equation*}
  \mathbf{S}^n:=\frac{1}{n}\sum_{k=1}^n \mean{\bv^k}.
\end{equation*}
It is a basic calculus result that if a real sequence $\{x_j\}_{j\in\N}$ converges to
$x\in\R$, then also its Ces\`aro mean $S_n = \frac{1}{n}\sum_{j=1}^n x_j$ will converge to
the same value $x$. On the other hand, the converse is false; sufficient conditions on the
sequence $\{x_j\}_{j\in\N}$ implying that if the Ces\`aro mean converges, then the
original sequence converges, are known in literature as Tauberian theorems. This is a
classical topic in the study of divergent sequences/series.  In the case of $X$-valued
sequences $\{\bu^k\}_{k\in\N}$ (the space $X$ being an infinite dimensional Banach space)
one has again that if a sequence converges strongly or weakly, then its Ces\`aro mean will
converge to the same value, strongly or weakly in $X$, respectively.

The fact that averaging generally improves the properties of a sequence, is reflected also
in the setting of Banach spaces even if with additional features coming into the theory.
Two main results we will consider are two theorems known as Banach-Saks and Banach-Mazur.

Banach and Saks originally in 1930 formulated the result in
$L^p(0,1)$, but it is valid in more general Banach spaces.
\begin{theorem}[Banach-Saks]
  Let be given a bounded sequence $\{x_j\}_{j\in\N}$ in a reflexive Banach space
  $X$. Then, there exists a sub-sequence $\{x_{j_k}\}_{k\in\N}$ such that the sequence
  $\{S_m\}_{m\in \N}$ defined by
  \begin{equation*}
    S_m := \frac{1}{m}\sum_{k=1}^m x_{j_k},
  \end{equation*}
converges strongly in $X$.
\end{theorem}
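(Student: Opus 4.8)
The plan is to prove the Banach--Saks theorem following the classical Kakutani argument, extracting a subsequence along which partial sums are ``almost orthogonal'' in the sense of a geometric inequality, and then showing the Ces\`aro means form a Cauchy sequence. First I would reduce to the case where the bounded sequence $\{x_j\}$ converges weakly: since $X$ is reflexive and $\{x_j\}$ is bounded, a first subsequence extraction gives $x_{j}\rightharpoonup x$, and replacing $x_j$ by $x_j-x$ we may assume $x_j\rightharpoonup 0$ with $\|x_j\|\le M$ for all $j$. The target is then to find a subsequence $\{y_k\}=\{x_{j_k}\}$ with $\|y_1+\dots+y_m\|^2\le Km$ for some constant $K$, which immediately gives $\|S_m\|\le \sqrt{K/m}\to 0$, and more to the point gives strong convergence of $\{S_m\}$ (one checks $\{S_m\}$ is Cauchy by the same bound applied to tail blocks).

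The heart of the argument is the inductive selection. Here I would invoke the geometric structure: in a reflexive space one does not have an inner product, so the right tool is to use for each finite partial sum $s=y_1+\dots+y_{k}$ a norming functional $\varphi_s\in X'$ with $\|\varphi_s\|=1$ and $\langle \varphi_s, s\rangle = \|s\|$, and then exploit $y_{j}\rightharpoonup 0$ to choose the next index $j_{k+1}$ so large that $|\langle \varphi_{s}, y_{j_{k+1}}\rangle|$ is as small as we like, say $\le 1/k$. With such a choice one estimates
\begin{equation*}
\|y_1+\dots+y_{k+1}\| \le \|y_1+\dots+y_k\| + |\langle \varphi_s, y_{k+1}\rangle| + \|y_{k+1}\| \le \|y_1+\dots+y_k\| + \tfrac{1}{k} + M,
\end{equation*}
which telescopes to $\|y_1+\dots+y_m\|\le M m + \sum_{k<m} 1/k + \|y_1\| = O(m)$, hence $\|S_m\|=O(1)$ — but this only yields boundedness of the Ces\`aro means, not convergence. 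To get the sharper $O(\sqrt m)$ bound that forces convergence, I would instead work with squares: bound $\|y_1+\dots+y_{k+1}\|^2$ by expanding $\|s+y_{k+1}\|^2 \le \|s\|^2 + 2\langle\varphi_{s+y_{k+1}},s\rangle\,\varepsilon_k + \|y_{k+1}\|^2$-type terms, where the cross term is controlled by the smallness of $\langle \varphi, y_{k+1}\rangle$ against the already-selected block; choosing $\varepsilon_k$ summable makes the recursion $a_{k+1}\le a_k + C + (\text{small})$ with $a_k=\|s_k\|^2$, giving $a_m\le Cm + o(m)$, i.e. $\|S_m\|^2 \le C/m + o(1/m)$.

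The main obstacle is exactly this passage from the crude linear bound to the quadratic one in a space with no inner product: the naive triangle-inequality induction degrades to $O(m)$ and gives nothing. The standard fix, which I would present carefully, is to choose the new index not just relative to the norming functional of the current partial sum but uniformly relative to \emph{all} norming functionals of \emph{all} sub-blocks selected so far (a finite set at each stage), and to make the errors $\varepsilon_k$ decay fast enough (e.g. $\varepsilon_k \le 2^{-k}$) that the accumulated cross-terms contribute only an $O(1)$ total; then a convexity/parallelogram-type estimate valid in the uniformly convex case — or, in the general reflexive case, the refined combinatorial estimate of Kakutani grouping indices dyadically — yields $\|y_1+\dots+y_m\| = O(\sqrt m)$. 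Once that bound is in hand, the Cauchy property of $\{S_m\}$, and hence its strong convergence, follows by splitting $S_{m}-S_{m'}$ into a main term and a boundary term and applying the $O(\sqrt m)$ estimate to each block, completing the proof.
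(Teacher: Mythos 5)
The paper itself gives no proof of this statement: it is quoted as classical background (Banach--Saks 1930, Kakutani), so your attempt can only be measured against the known proofs. Your overall plan (pass to a weakly null subsequence, then extract further so that $\|y_1+\dots+y_m\|=O(\sqrt m)$, whence $S_m\to 0$ strongly) is exactly the right skeleton, and it is carried out correctly and easily in a Hilbert space: there $\|s+y_{k+1}\|^2=\|s\|^2+2(s,y_{k+1})+\|y_{k+1}\|^2$ and the cross term is paired against the \emph{already fixed} partial sum $s$, so weak convergence to $0$ lets you choose $y_{k+1}$ with $|(s,y_{k+1})|\le 2^{-k}$, giving $\|s_m\|^2\le M^2m+O(1)$. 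Since the paper only ever applies the theorem in $V$, $V'$ (Hilbert spaces) and $L^{3/2}(\Omega)$ (uniformly convex), this version, or Kakutani's uniform-convexity argument, is all that is actually needed.

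However, as a proof of the statement as written (arbitrary reflexive $X$) your sketch has two genuine gaps. First, the quadratic expansion step does not survive outside Hilbert space: the duality estimate $\|s+y_{k+1}\|\le\langle\varphi_{s+y_{k+1}},s\rangle+\langle\varphi_{s+y_{k+1}},y_{k+1}\rangle$ uses the norming functional of the \emph{new} sum, which depends on the element $y_{k+1}$ you have not yet chosen; choosing $y_{k+1}$ small against the finitely many functionals of the previously selected blocks (your proposed fix) does not control this term, so the recursion $a_{k+1}\le a_k+C+\text{(small)}$ for $a_k=\|s_k\|^2$ is never established. Second, and more fundamentally, there is no ``refined combinatorial estimate of Kakutani'' covering the general reflexive case: Kakutani's dyadic pairing argument genuinely uses the modulus of uniform convexity, and the theorem for general reflexive spaces is in fact \emph{false} -- Baernstein (Studia Math.\ 42, 1972) constructed a reflexive space without the Banach--Saks property (only the converse, Banach--Saks implies reflexive, holds). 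So the gap in your argument cannot be closed at the stated level of generality; the correct move is to prove the theorem for uniformly convex (or just Hilbert) spaces, which is both where a proof exists and all that the ensemble-average argument of Proposition~\ref{prop:ensemble} and Theorem~\ref{thm:main_ensemble} requires.
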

The reader can observe that in some cases it is not needed to extract a
sub-sequence (think of any orthonormal set in an Hilbert space, which
is weakly converging to zero, and the Ces\`aro averages converge to
zero strongly), but in general one cannot infer that the averages of
the full sequence converge strongly. One sufficient condition is that
of \textit{uniform weak convergence}. We recall that $\{x_j\}\subset X$
\textit{uniformly weakly} converges to zero if for any $\epsilon>0$
there exists $j\in\N$, such that for all $\phi\in X'$, with
$\|\phi\|_{X'}\leq1$, it holds true that
\begin{equation*}
  \#\big\{ j\in N:\  |\phi(x_j)|\geq\varepsilon \big\}\leq j.
\end{equation*}
See also Brezis~\cite[p.~168]{Bre2011}.
\medskip

Another way of improving the weak convergence to the strong one is by the by the
convex-combination theorem (cf.~Yosida~\cite[p.120]{Yos1980}).
\begin{theorem}[Banach-Mazur]
  Let $(X,\|\,.\,\|_X)$ be a Banach space and let $\{x_j\}\subset X$ be a sequence
  such that $x_j\rightharpoonup x$ as $j\to+\infty$.

  Then, one can find for each $n\in\N$, real coefficients $\{\alpha^n_j\}$, for
  $j=1,\dots,n$ such that
  \begin{equation*}
    \alpha^n_j\geq0\qquad \text{and }\qquad\sum_{j=1}^n\alpha^n_j=1,
  \end{equation*}
  such that 
  \begin{equation*}
    \sum_{j=1}^n\alpha^n_j x_j\rightarrow x\qquad\text{in }X,\quad \text{as}\quad
    n\to+\infty, 
  \end{equation*}
  that is we can find a ``convex combination'' of $\{x_j\}$, which strongly converges to
  $x\in X$.
%  \begin{equation*}
%    \alpha^n_j\geq0\qquad \text{and }\qquad\sum_{j=1}^n\alpha^n_j=1,
%  \end{equation*}
%  with
%  \begin{equation*}
%    \smean{\bff}^n:=\sum_{j=1}^n\alpha^n_j \bff^j\rightarrow\smean{\bff}\qquad\text{in }V',
%  \end{equation*}
%  that is we can find a ``convex combination'' of $\bff^k$, which
%  strongly converges to $\smean{\bff}$.
\end{theorem}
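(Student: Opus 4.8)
The plan is to deduce the statement from the classical fact, itself a consequence of the Hahn--Banach separation theorem, that for a \emph{convex} subset of a Banach space the weak closure and the norm closure coincide; no reflexivity of $X$ is needed here (in contrast with the preceding Banach--Saks result). First I would set $C:=\operatorname{conv}\{x_j:\ j\in\N\}$, the convex hull of the terms of the sequence, and denote by $\overline{C}^{\,w}$ and $\overline{C}$ its weak and norm closures, respectively. Since $x_j\rightharpoonup x$, the point $x$ lies in the weak closure of the set $\{x_j:\ j\in\N\}$, hence a fortiori in $\overline{C}^{\,w}$.

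The heart of the argument is the identity $\overline{C}^{\,w}=\overline{C}$. The inclusion $\overline{C}\subseteq\overline{C}^{\,w}$ is automatic, the weak topology being coarser than the norm topology, so that every norm-closed set containing $C$ is a fortiori... more precisely, every weakly closed set is norm closed, whence $\overline{C}\subseteq\overline{C}^{\,w}$. For the reverse inclusion I would argue by contraposition: if $x\notin\overline{C}$, then $\overline{C}$ is a nonempty closed convex set and $\{x\}$ is a disjoint compact convex set, so the second geometric form of the Hahn--Banach theorem yields $\phi\in X'$ and $\gamma\in\R$ with $\phi(c)\le\gamma<\phi(x)$ for every $c\in\overline{C}$. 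Then $\{y\in X:\ \phi(y)>\gamma\}$ is a weakly open neighbourhood of $x$ disjoint from $C$, so $x\notin\overline{C}^{\,w}$. This proves $\overline{C}^{\,w}\subseteq\overline{C}$, and combining with the membership above gives $x\in\overline{C}$.

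Finally I would convert membership in $\overline{C}$ into the announced sequence of convex combinations. By definition of the norm closure, for each $n\in\N$ there is a finite convex combination $z_n=\sum_j\lambda^n_j x_j$, with $\lambda^n_j\ge0$, $\sum_j\lambda^n_j=1$ and only finitely many $\lambda^n_j$ nonzero, such that $\|z_n-x\|_X<1/n$. Letting $N_n$ be the largest index with $\lambda^n_{N_n}\neq0$ and padding the intermediate coefficients with zeros, I would write $z_n=\sum_{j=1}^{N_n}\alpha^n_j x_j$ with $\alpha^n_j\ge0$ and $\sum_{j=1}^{N_n}\alpha^n_j=1$; after relabelling the upper index this is exactly the claimed form, and $z_n\to x$ in $X$ as $n\to+\infty$ by construction.

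I expect the only delicate point to be the separation step, namely the passage from the weak closure to the norm closure for the convex set $C$, which is where Hahn--Banach is genuinely used; once this is established the remainder is a direct unwinding of the definition of the norm closure together with the elementary padding and relabelling of coefficients.
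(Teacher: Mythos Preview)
Your argument is correct and is the standard proof of Mazur's lemma via Hahn--Banach separation; the paper, however, does not prove this statement at all but merely quotes it as a classical result, with a reference to Yosida~\cite[p.~120]{Yos1980}. There is therefore nothing to compare against: your route is the expected one, and the separation step you flag as ``delicate'' is indeed the substantive point.

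One small remark on the final relabelling: the statement, as written, asks that the $n$-th combination use precisely the indices $1,\dots,n$, whereas your $z_n$ uses indices up to some $N_n$ that need not equal $n$. Your phrase ``after relabelling the upper index'' is correct in spirit but deserves a line of explanation: pass to a subsequence so that $(N_n)$ is increasing, and for each $m$ with $N_k\le m<N_{k+1}$ set the $m$-th combination equal to $z_k$ padded with zeros up to index $m$; this sequence still converges to $x$ in norm and now has the required form.
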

One basic point will be that of considering averages of the external forces, which we will
denote by $\smean{\bff}^n$ and considering the same averages of the solution of the
Reynolds equations $\smean{\mean{\bv}}^n$.  They are both bounded and hence, weakly
converging (up to a sub-sequence) to $\smean{\bff}\in V'$ and $\smean{\mean{\bv}}\in V$,
respectively. Then, in order to prove that the dissipativity is preserved one has to
handle the following limit of the products
\begin{equation*}
  \lim_{n\to+\infty}  <\smean{\bff}^n,\smean{\mean{\bv}}^n>,
\end{equation*}
which cannot be characterized, unless (at least) one of the two terms
converges strongly. This is why we have to use special means instead
of the simple Ces\`aro averages

The first result of this section is then the following:
\begin{proposition}
  \label{prop:ensemble}
  Let be given $\{\bff^k\}_{k\in\N}$ uniformly bounded in $V'$. Then one can find either a
  Banach-Saks sub-sequence or a convex combination of $\{\bv^k\}_{k\in \N}$, which are
  converging weakly to some $\smean{\bv}\in V$, which satisfies a Reynolds
  system~\eqref{eq:Reynolds6}, with an additional dissipative term.
\end{proposition}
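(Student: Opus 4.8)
The plan is to pass to the limit in the Reynolds equations satisfied by each $\mean{\bv^k}$, using a summability technique on the family $\{\mean{\bv^k}\}_{k\in\N}$ so that at least one of the two sequences appearing in the nonlinear terms converges strongly. First I would observe that, by the uniform bound \eqref{eq:bound_f_k} on $\{\bff^k\}$, the a priori estimates from Lemma~\ref{lem:energy-estimates} (rephrased through $M_t$, i.e.\ Lemma~\ref{lem:lemme_dissispation}) give a bound on $\|\nabla\mean{\bv^k}\|$ that is \emph{uniform in $k$}: indeed $\mean{\bv^k}$ is the weak-$V$ limit of $M_{t_n}(\bv^k)$, each of which satisfies $\|\nabla M_{t}(\bv^k)\|\le c$ with $c$ depending only on $\|\bff^k\|_{V'}\le C$ (taking $\bv_0=\mathbf 0$). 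Consequently $\{\mean{\bv^k}\}_{k\in\N}$ is bounded in the reflexive (Hilbert) space $V$, and likewise $\{\bB^k\}$ and $\{\bF^k\}$ are bounded in $L^{3/2}(\Omega)^3$ by the Sobolev-interpolation estimate $\|(\bv\cdot\nabla)\bv\|_{L^{3/2}}\le C_S\|\nabla\bv\|^2$ used in the proof of Proposition~\ref{prop:thm1}.

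Next I would apply either the Banach-Saks theorem or the Banach-Mazur (convex-combination) theorem to $\{\bff^k\}$ and, simultaneously, to $\{\mean{\bv^k}\}$: extracting a common sub-sequence (or choosing common convex coefficients $\{\alpha_j^n\}$), one gets $\smean{\bff}^n\to\smean{\bff}$ \emph{strongly} in $V'$ and $\smean{\bv}^n\rightharpoonup\smean{\bv}$ weakly in $V$; the latter can also be upgraded — at least along a further sub-sequence — to strong convergence in $V$ by Banach-Saks applied once more, which is what makes the limit of $<\smean{\bff}^n,\smean{\bv}^n>$ identifiable. Then, averaging the linear Reynolds identity (the first displayed equation for $\mean{\bv^k}$ in Section~\ref{sec:ensemble}) over $k$ with the chosen weights, one obtains that $\smean{\bv}^n$ satisfies
\begin{equation*}
  \nu\int_\Omega\nabla\smean{\bv}^n:\nabla\bphi\,d\bx+\int_\Omega\smean{\bB}^n\cdot\bphi\,d\bx=\ <\smean{\bff}^n,\bphi>,
\end{equation*}
where $\smean{\bB}^n$ is the same convex average of the $\bB^k$; passing $n\to\infty$ (using weak convergence of $\smean{\bB}^n$ in $L^{3/2}$, up to sub-sequence, to some $\bB$, and strong $V$-convergence of $\smean{\bv}^n$ to pass the test function) yields the limiting equation. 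Setting $\bF:=\bB-(\smean{\bv}\cdot\nabla)\smean{\bv}$ and $\smean{\reyn}$ so that $\nabla\cdot\smean{\reyn}=\bF$ (as in Proposition~\ref{prop:thm1}) rewrites it in Reynolds form \eqref{eq:Reynolds6}.

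Finally, dissipativity: testing the limiting equation with $\smean{\bv}\in V$ and using $\int_\Omega(\smean{\bv}\cdot\nabla)\smean{\bv}\cdot\smean{\bv}\,d\bx=0$ gives
\begin{equation*}
  \nu\|\nabla\smean{\bv}\|^2+\int_\Omega(\nabla\cdot\smean{\reyn})\cdot\smean{\bv}\,d\bx=\ <\smean{\bff},\smean{\bv}>,
\end{equation*}
while averaging the energy balances \eqref{eq:energy_balance_moy} for the $\mean{\bv^k}$ and passing to the limit — here the strong $V'$-convergence of $\smean{\bff}^n$ together with the boundedness in $V$ of $\smean{\bv}^n$ controls $<\smean{\bff}^n,\smean{\bv}^n>$, and weak lower semicontinuity of the $V$-seminorm handles $\nu\|\nabla\cdot\|^2$ — produces $\nu\|\nabla\smean{\bv}\|^2\le\ <\smean{\bff},\smean{\bv}>$, whence $0\le\int_\Omega(\nabla\cdot\smean{\reyn})\cdot\smean{\bv}\,d\bx$. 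I expect the main obstacle to be precisely this last step: making the interchange of "average over $k$" with "limit as $n\to\infty$" rigorous in the \emph{quadratic} quantities $\|\nabla\mean{\bv^k}\|^2$ and $<\bff^k,\mean{\bv^k}>$, since Jensen-type inequalities only go one way and the strong convergence of the convex combinations $\smean{\bv}^n$ in $V$ (not merely weak) is what salvages the argument; one must be careful that the Banach-Saks extractions for $\{\bff^k\}$ and $\{\mean{\bv^k}\}$ can be performed along a single common sub-sequence so that the paired limit $<\smean{\bff},\smean{\bv}>$ is the honest limit of $<\smean{\bff}^n,\smean{\bv}^n>$.
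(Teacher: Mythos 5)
Your first half follows the paper: the uniform bound on $\|\nabla\mean{\bv}^k\|$ from \eqref{eq:bound_f_k}, the Banach-Saks/Banach-Mazur averaging of the \emph{forces} so that $\smean{\bff}^n\to\smean{\bff}$ strongly in $V'$, the weak limits of $\smean{\bv}^n$ and $\smean{\bB}^n$, the limit equation \eqref{eq:Reynolds6}, and the energy identity \eqref{eq:energy-statistical} for the limit are all as in the paper. But two points in the second half do not hold up. First, the simultaneous strong $V$-convergence of $\smean{\bv}^n$ that you invoke is neither delivered by the theorems you cite nor needed. The Banach-Saks sub-sequence (or the Banach-Mazur coefficients) is tailored to the sequence $\{\bff^k\}$; applying Banach-Saks \say{once more} to the velocities changes the index set (or the weights), after which the force averages need not converge strongly any more, so the \say{common extraction} you flag is a real obstruction, not a formality. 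The paper never requires strong convergence of the velocity averages: weak convergence in $V$ suffices both to pass to the limit in the linear identity \eqref{eq:Reynolds3} against a fixed $\bphi$ and to identify $\lim_n<\smean{\bff}^n,\smean{\bv}^n>\ =\ <\smean{\bff},\smean{\bv}>$, since one factor converges strongly in $V'$ and the other weakly in $V$.

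Second, and more seriously, your derivation of the key inequality $\nu\|\nabla\smean{\bv}\|^2\le\ <\smean{\bff},\smean{\bv}>$ by \say{averaging the energy balances \eqref{eq:energy_balance_moy}} fails. Averaging the per-$k$ balances, using the per-$k$ dissipativity and convexity of the $V$-seminorm, yields $\nu\|\nabla\smean{\bv}^n\|^2\le\frac{1}{n}\sum_{k=1}^n<\bff^k,\mean{\bv}^k>$, i.e.\ the \emph{average of the diagonal pairings}, whereas your convergence argument controls the \emph{pairing of the averages} $<\smean{\bff}^n,\smean{\bv}^n>$; these differ by exactly the force--velocity \say{covariance} that cannot be estimated from boundedness alone, and there is no reason for the limit of $\frac{1}{n}\sum_k<\bff^k,\mean{\bv}^k>$ to be bounded by $<\smean{\bff},\smean{\bv}>$. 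This mismatch is precisely the closure difficulty the special averaging is designed to circumvent. The paper's route is different at this point: it tests the \emph{averaged} equation \eqref{eq:Reynolds4} with $\smean{\bv}^n$ itself, so that the same averaged pairing $<\smean{\bff}^n,\smean{\bv}^n>$ appears on the right, invokes the results of the previous section to discard the remaining term with the correct sign and obtain $\nu\|\nabla\smean{\bv}^n\|^2\le\ <\smean{\bff}^n,\smean{\bv}^n>$ for each $n$, and only then passes to the limit using the strong $V'$-convergence of $\smean{\bff}^n$, the weak $V$-convergence of $\smean{\bv}^n$, and weak lower semicontinuity of the norm, finally comparing with \eqref{eq:energy-statistical}. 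Your argument needs to be rerouted through this step; as written, the conclusion $0\le\int_\Omega(\nabla\cdot\smean{\reyn})\cdot\smean{\bv}\,d\bx$ does not follow.
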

\begin{proof}[Proof of Theorem~\ref{prop:ensemble}]
  We define $\smean{\bff}^n$ and $\smean{\bv}^n$ to be either 
  \begin{equation*}
    \smean{\bff}^n:=\frac{1}{n}\sum_{k=1}^n{\bff^{j_k}} \qquad
    \text{and}\qquad      \smean{\bv}^n:=\frac{1}{n}\sum_{k=1}^n{\mean{\bv}^{j_k}},
  \end{equation*}
  or alternatively
  \begin{equation*}
    \smean{\bff}^n:=\sum_{j=1}^n\alpha^n_j{\bff^j} \qquad
    \text{and}\qquad      \smean{\bv}^n:=\sum_{j=1}^n\alpha^n_j\mean{\bv^j},
  \end{equation*}
  where the sub-sequence $\{{j_k}\}_{k\in\N}$ or the coefficients
  $\{\alpha^n_j\}_{j,\,n\in \N}$ are chosen accordingly to the Banach-Saks of Banach-Mazur
  theorems in such a way that in both cases
\begin{equation*}
%  \label{eq:fundamental}
   \smean{\bff}^n\to\smean{\bff}\qquad \text{in }V'.
\end{equation*}

We define, accordingly to the same rules $\smean{\bB}^n$, and we observe
that, by linearity, we have $\forall\,n\in\N$
  \begin{equation}
    \label{eq:Reynolds3}
    \nu\int_\Omega\nabla \smean{\bv}^n:\nabla\bphi\,d\bx+
    \int_\Omega\smean{\bB}^n\cdot\bphi\,d\bx=\ < 
    \smean{\bff}^n,\bphi>\qquad\forall\,\bphi\in V.
  \end{equation}
  Then, we can define
  $\smean{\bF}^n:=\smean{\bB}^n-(\smean{\bv}^n\cdot\nabla)\,\smean{\bv}^n$, to
  rewrite~\eqref{eq:Reynolds3} also as follows
  \begin{equation}
    \label{eq:Reynolds4}
    \nu\int_\Omega\nabla \smean{\bv}^n:\nabla\bphi\,d\bx+
    \int_\Omega\big(\smean{\bv}^n\cdot\nabla\big)\,\smean{\bv}^n\cdot\bphi\,d\bx+\int_\Omega\smean{\bF}^n\cdot\bphi\,d\bx
    =\  <\smean{\bff}^n,\bphi>.
  \end{equation}
  By the uniform bound on $\|\bff^k\|_{V'}$ and by results of
  Section~\ref{sec:Reynolds-stress} on the Reynolds equations it
  follows that there exists $C$ such that $\|\mean{\bv}^k\|_V\leq C$,
  hence 
  \begin{equation*}
    \| \smean{\bv}^n\|_V  \leq C\qquad\forall\,n\in\N,
  \end{equation*}
  and  we can suppose that (up to sub-sequences) we have weak
  convergence of the convex combinations
  \begin{equation*}
%    \label{eq:weak2}
    \begin{aligned}
      \smean{\bv}^n&\rightharpoonup
      \smean{\bv}\qquad\text{ in }V,
      \\
      \smean{\bB}^n&\rightharpoonup
      \smean{\bB}\qquad\text{ in }L^{3/2}(\Omega)^3,
      \\
      \smean{\bF}^n&\rightharpoonup
      \smean{\bF}\qquad\text{ in }L^{3/2}(\Omega)^3,
    \end{aligned}
  \end{equation*}
  Hence, passing to the limit in~\eqref{eq:Reynolds3}, we obtain
  \begin{equation*}
  %  \label{eq:Reynolds5}
    \nu\int_\Omega\nabla\smean{\bv}:\nabla\bphi\,d\bx+
    \int_\Omega\smean{\bB}\cdot\bphi\,d\bx=\ < 
    \smean{\bff},\bphi>\qquad\forall\,\bphi\in V.
  \end{equation*}
  By the same reasoning used before we have, for
  $\smean{\bF}:=\smean{\bB}-(\smean{\bv}\cdot\nabla)\,\smean{\bv}$,
  \begin{equation}
    \label{eq:Reynolds6}
    \nu\int_\Omega\nabla\smean{\bv}:\nabla\bphi\,d\bx+
    \int_\Omega(\smean{\bv}\cdot\nabla)\,\smean{\bv}\cdot\bphi\,d\bx+
    \int_\Omega\smean{\bF}\cdot\bphi\,d\bx=\ <  
    \smean{\bff},\bphi>.
  \end{equation}
  Then, if we take $\bphi=\smean{\bv}$ in~\eqref{eq:Reynolds6} we obtain
  \begin{equation}
    \label{eq:energy-statistical}
    \nu\|\nabla\smean{\bv}\|^2
    +\int_\Omega\smean{\bF}\cdot\smean{\bv}\,d\bx=\ <\smean{\bff},\smean{\bv}>. 
  \end{equation}
  On the other hand, if we take $\bphi=\smean{\bv}^n$ in~\eqref{eq:Reynolds4} and by the
  result of the previous section, we have
  \begin{equation*}
    \nu\|\nabla\smean{\bv}^n\|^2\leq\ <\smean{\bff}^n,\smean{\bv}^n>,
  \end{equation*}
  hence passing to the limit, by using the strong convergence of
  $\smean{\bff}^n$ in $V'$ and the weak convergence of $\smean{\bv}^n$
  in $V$ we have
  \begin{equation*}
    \nu\|\nabla\smean{\bv}\|^2\leq\liminf_{n\to+\infty}
    \nu\|\nabla\smean{\bv}^n\|^2\leq\ 
    <\smean{\bff},\smean{\bv}>.
  \end{equation*}
  If we compare with~\eqref{eq:energy-statistical} we have finally the dissipativity
  \begin{equation*}
    \frac{1}{|\Omega|}\int_\Omega(\nabla\cdot \smean{\reyn})\cdot\smean{\bv}\,d\bx
    =\frac{1}{|\Omega|}\int_\Omega\smean{\bF}\cdot\smean{\bv}\,d\bx\geq0, 
  \end{equation*}
  that is a sort of ensemble/long-time Boussinesq
  hypothesis, cf. with the results from Ref.~\cite{Lay2014,LL2016}.
\end{proof}
In the previous theorem, we have a result which does not concern directly with the
ensemble averages, but a selection of special coefficients is required. This is not
completely satisfactory from the point of view of the numerical computations, where the
full arithmetic mean should be considered. The main result can be obtained at the price of
a slight refinement on the hypotheses on the external forces

To this end we recall a lemma, which is a sort of Rellich theorem in
negative spaces (see also Galdi~\cite[Thm.~II.5.3]{Gal2011} and
Feireisl~\cite[Thm.~2.8]{Fei2004}).
\begin{lemma}
\label{lem:Sobolev}
Let $\Omega\subset\R^n$ be bounded and let be given $1<p<n$. Let $\{f_k\}_{k\in\N}$ be a
sequence uniformly bounded in $L^q(\Omega)$ with $q>(p^*)'$, where $p^*=\frac{n p}{n-p}$ is
the exponent in the Sobolev embedding $W^{1,p}_0(\Omega)\hookrightarrow
L^{p^*}(\Omega)$. Then, there exists a sub-sequence $\{f_{k_m}\}_{m\in \N}$ and $f\in
L^q(\Omega)$ such that
  \begin{equation*}
    \begin{aligned}
     & f_{k_m}\rightharpoonup f\qquad\text{in }L^q(\Omega),
     \\
     & f_{k_m}\rightarrow f\qquad\text{in }W^{-1,p'}(\Omega),
    \end{aligned}
  \end{equation*}
  or, in other words, the embedding $L^q(\Omega)\hookrightarrow\hookrightarrow
  W^{-1,p'}(\Omega)$ is compact.
\end{lemma}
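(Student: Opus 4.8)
The plan is to prove the operator-theoretic statement, namely that the canonical inclusion $L^q(\Omega)\hookrightarrow W^{-1,p'}(\Omega)$ is a compact linear map; the two asserted convergences for the sequence $\{f_k\}$ then follow immediately. Indeed, since $q>(p^*)'$ and $\Omega$ is bounded we have $L^q(\Omega)\hookrightarrow L^{(p^*)'}(\Omega)$, and for $f\in L^{(p^*)'}(\Omega)$ and $\varphi\in W^{1,p}_0(\Omega)\hookrightarrow L^{p^*}(\Omega)$ the pairing $\langle f,\varphi\rangle=\int_\Omega f\varphi\,d\bx$ is finite by H\"older, with $|\langle f,\varphi\rangle|\le\|f\|_{(p^*)'}\|\varphi\|_{p^*}\le C\|f\|_q\|\varphi\|_{W^{1,p}_0}$; this shows that the inclusion $L^q(\Omega)\hookrightarrow W^{-1,p'}(\Omega)$ is well defined and continuous. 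Granting its compactness: as $1<q<\infty$ the space $L^q(\Omega)$ is reflexive, so from the bounded sequence $\{f_k\}$ I extract a subsequence $f_{k_m}\rightharpoonup f$ in $L^q(\Omega)$; compactness sends this bounded, weakly convergent sequence to a strongly convergent one in $W^{-1,p'}(\Omega)$, and continuity of the embedding forces the strong limit to be $f$. This yields both stated convergences at once.

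It remains to establish compactness, which I would obtain by duality from Rellich--Kondrachov. First, for the bounded set $\Omega$ and any exponent $r\in(1,p^*)$ the Rellich--Kondrachov theorem gives that the inclusion $\iota\colon W^{1,p}_0(\Omega)\hookrightarrow L^r(\Omega)$ is compact (no regularity of $\partial\Omega$ is needed, since functions in $W^{1,p}_0$ extend by zero). By Schauder's theorem the adjoint of a compact operator is compact, and the adjoint of $\iota$, under the identifications $(L^r)'=L^{r'}$ and $(W^{1,p}_0)'=W^{-1,p'}$, is exactly the canonical inclusion $\iota^*\colon L^{r'}(\Omega)\hookrightarrow W^{-1,p'}(\Omega)$, because $\langle\iota^*g,\varphi\rangle=\langle g,\iota\varphi\rangle=\int_\Omega g\varphi\,d\bx$ for $g\in L^{r'}$ and $\varphi\in W^{1,p}_0$. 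Hence $L^{r'}(\Omega)\hookrightarrow W^{-1,p'}(\Omega)$ is compact for every $r\in(1,p^*)$.

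Finally I fix the exponent. As $r\uparrow p^*$ the conjugate $r'$ decreases to $(p^*)'$, so the strict inequality $q>(p^*)'$ lets me choose $r<p^*$ (with $r>1$) for which $r'\le q$; on the bounded domain this gives the continuous inclusion $L^q(\Omega)\hookrightarrow L^{r'}(\Omega)$. Composing this continuous map with the compact map $L^{r'}(\Omega)\hookrightarrow W^{-1,p'}(\Omega)$ produces a compact map, and since the composition is precisely the canonical inclusion, the embedding $L^q(\Omega)\hookrightarrow\hookrightarrow W^{-1,p'}(\Omega)$ is compact, completing the proof.

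The main obstacle---really a point requiring care rather than a genuine difficulty---is the clean passage through duality: verifying that the adjoint of the Sobolev inclusion is literally the inclusion of the dual Lebesgue space into $W^{-1,p'}$ (not merely an abstractly isomorphic map), and carrying out the exponent bookkeeping so that $q>(p^*)'$ is exactly what is needed to interpose a Lebesgue space $L^{r'}$ with $r<p^*$. Should one prefer to avoid Schauder's theorem, the same conclusion can be reached directly: writing $g_m=f_{k_m}-f\rightharpoonup0$ in $L^{r'}$, one estimates $\|g_m\|_{W^{-1,p'}}=\sup_{\|\varphi\|_{W^{1,p}_0}\le1}|\int_\Omega g_m\varphi\,d\bx|$ and uses that the unit ball of $W^{1,p}_0$ is precompact in $L^r$ (Rellich) together with the standard fact that a norm-bounded, weakly null sequence converges to zero uniformly on compact subsets of $L^r$.
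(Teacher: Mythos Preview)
Your proof is correct. Your main route differs from the paper's: you invoke Schauder's theorem to pass from the compactness of $W^{1,p}_0(\Omega)\hookrightarrow L^r(\Omega)$ (Rellich--Kondrachov) to the compactness of the adjoint inclusion $L^{r'}(\Omega)\hookrightarrow W^{-1,p'}(\Omega)$, then precompose with $L^q\hookrightarrow L^{r'}$. The paper instead argues directly: after reducing to $f_{k_m}\rightharpoonup 0$, it picks for each $m$ a (near-)norming element $\phi_{k_m}\in W^{1,p}_0$ with $\|f_{k_m}\|_{W^{-1,p'}}=\langle f_{k_m},\phi_{k_m}\rangle$, applies Rellich to extract $\phi_{k_j}\to\phi$ strongly in $L^{q'}$, and splits $\langle f_{k_m},\phi_{k_m}\rangle=\langle f_{k_m},\phi_{k_m}-\phi\rangle+\langle f_{k_m},\phi\rangle$. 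This is essentially the hands-on version of the alternative you sketch in your final paragraph. Your duality argument is cleaner and gives the operator-level statement directly; the paper's argument is more self-contained (no Schauder), though as written it bounds $\langle f_{k_m},\phi_{k_m}-\phi\rangle$ by $\|f_{k_m}\|_{W^{-1,p'}}\|\phi_{k_m}-\phi\|_{W^{1,p}_0}$, which is not the right estimate---one should use $\|f_{k_m}\|_{L^q}\|\phi_{k_m}-\phi\|_{L^{q'}}$ instead, exactly as your exponent bookkeeping anticipates.
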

We present the proof for the reader's convenience.
\begin{proof}[Proof of Lemma~\ref{lem:Sobolev}] 
Since by hypothesis $L^q(\Omega)$ is reflexive, by the
Banach-Alaouglu-Bourbaki theorem we can find a sub-sequence  $f_{k_m}$
such that 
\begin{equation*}
  f_{k_m}\rightharpoonup f\qquad\text{in }L^q(\Omega),
\end{equation*}
and by considering the sequence $\{ f_{k_m}- f\}_{m\in\N}$ we can suppose that $f=0$.  We
then observe that by the Sobolev embedding we have the continuous embeddings
\begin{equation*}
  W^{1,p}_0(\Omega)\hookrightarrow  L^{p^*}(\Omega)\sim
  L^{(p^*)'}(\Omega)\hookrightarrow  (W^{1,p}_0(\Omega))'\simeq
  W^{-1,p'}(\Omega),
\end{equation*}
where $L^{p^*}(\Omega)\sim L^{(p^*)'}(\Omega)$ is the duality identification, while the
second one $(W^{1,p}_0(\Omega))'\simeq W^{-1,p'}(\Omega) $ is the Lax isomorphism. This
shows $L^{(p^*)'}(\Omega)\subseteq W^{-1,p'}(\Omega)$.

Next, let be given a sequence $\{f_{k_m}\}\subset W^{-1,p'}(\Omega)$, then
by reflexivity (since $1<p<\infty$) there exists $\{\phi_{k_m}\}\subset
W^{1,p}_0(\Omega)$ such that
\begin{equation*}
%  \begin{aligned}
    \|f_{k_m}\|_{W^{-1,p'}(\Omega)}=f_{k_m}(\phi_{k_m})=\ <
    f_{k_m},\phi_{k_m}>,
%    \\
%    &\text{with}\qquad 
%\|\phi_{k_m}\|_{W^{1,p}_0(\Omega)}=\|\nabla\phi_{k_m}\|_{L^{p}(\Omega)}=1.
%  \end{aligned}
  \end{equation*} {with}
  $\|\phi_{k_m}\|_{W^{1,p}_0(\Omega)}=\|\nabla\phi_{k_m}\|_{L^{p}(\Omega)}=1$.

Hence, by using the classical Rellich theorem, we can find a
sub-sequence $\{\phi_{k_j}\}_{j\in\N}$ such that
\begin{equation*}
  \phi_{k_j}\to \phi \qquad\text{in }L^r(\Omega)\qquad\forall\,r<p^*.
\end{equation*}
In particular, we fix $r=q'$ (observe that $q>(p^*)'$ implies
$q'<p^*$) and we have 
\begin{equation*}
  \|f_{k_m}\|_{W^{-1,p'}(\Omega)}=\ < f_{k_m},\phi_{k_m}-\phi>+< f_{k_m},\phi>.
\end{equation*}
The last term converges to zero, by the definition of weak convergence
$f_{k_m}\rightharpoonup0$, while the first one satisfies
\begin{equation*}
  \left|<    f_{k_m},\phi_{k_m}-\phi>\right|\leq\|f_{k_m}\|_{W^{-1,p'}}\|\phi_{k_m}-\phi\|_{W^{1,p}_0},
\end{equation*}
and since $\|f_{k_m}\|_{W^{-1,p'}}$ is uniformly bounded and
$\|\phi_{k_m}-\phi\|_{W^{1,p}_0}$ goes to zero, then also this one vanishes as
$j\to+\infty$.
\end{proof}
\begin{proof}[Proof of Theorem~\ref{thm:main_ensemble}]
  The proof of this theorem can be obtained by following the same ideas of the
  Proposition~\ref{prop:ensemble}.  In fact, the main improvement is that the weak
  convergence $\bff^j\rightharpoonup \smean{\bff}$ in $L^q(\Omega)$ implies (without
  extracting sub-sequences) that
  \begin{equation*}
    \bff^k\rightarrow \bff\qquad  \text{in }V'.
  \end{equation*}
  This follows since from any sub-sequence we can find a further
sub-sequence which is converging strongly, by
Lemma~\ref{lem:Sobolev}. Then, by the weak convergence of the original
sequence, the limit is always the same and this implies that the whole
sequence $\{\bff^k\}$ strongly converges to its weak limit.

Hence, we have
\begin{equation*}
  \frac{1}{n}\sum_{k=1}^n   \bff^k\rightarrow \bff\qquad\text{in }V',
\end{equation*}
and then, since $\smean{\bv}^n\rightharpoonup\smean{\bv}$ in $V$, we can infer that
\begin{equation*}
  < \smean{\bff}^n,\smean{\bv}^n >\ =\  <\frac{1}{n}\sum_{k=1}^n   \bff^k,
  \frac{1}{n}\sum_{k=1}^n\mean{\bv}^k>\ \to\   < \smean{\bff},\smean{\bv} >,
\end{equation*}
and the rest follows as in Proposition~\ref{prop:ensemble}.
\end{proof}

\bigskip

\section*{Acknowledgments}
The research that led to the present paper was partially supported by a grant of the group
GNAMPA of INdAM the project the University of Pisa within the grant
PRA$\_{}2018\_{}52$~UNIPI \textit{Energy and regularity: New techniques for classical PDE
problems.}
\def\ocirc#1{\ifmmode\setbox0=\hbox{$#1$}\dimen0=\ht0 \advance\dimen0
  by1pt\rlap{\hbox to\wd0{\hss\raise\dimen0
  \hbox{\hskip.2em$\scriptscriptstyle\circ$}\hss}}#1\else {\accent"17 #1}\fi}
  \def\cprime{$'$} \def\polhk#1{\setbox0=\hbox{#1}{\ooalign{\hidewidth
  \lower1.5ex\hbox{`}\hidewidth\crcr\unhbox0}}} \def\cprime{$'$}

% \bibliographystyle{plain}
%\bibliography{../../buro/raccoglitore/lista.bib,../../buro/raccoglitore/altri.bib,../../buro/raccoglitore/libri.bib}
\end{document}